\crefname{section}{Section}{Sections}
\crefname{subsection}{\S}{\S\S}
\theoremstyle{plain}
\newtheorem{lemma}{Lemma}[section]
\newtheorem{proposition}[lemma]{Proposition}
\newtheorem{corollary}[lemma]{Corollary}
\newtheorem{theorem}[lemma]{Theorem}
\theoremstyle{nonumberplain}
\newtheorem{theoremN}{Theorem}
\newtheorem{propositionN}{Proposition}
\theoremstyle{plain}
\newtheorem{definition}[lemma]{Definition}
\newtheorem{example}[lemma]{Example}
\newtheorem{remark}[lemma]{Remark}
\crefname{definition}{definition}{definitions}
\crefname{ex}{example}{examples}
\crefname{remark}{remark}{remarks}
\crefname{convention}{convention}{conventions}
\crefname{notation}{notation}{notations}
\crefname{question}{question}{questions}
\crefname{lemma}{lemma}{lemmas}
\crefname{proposition}{proposition}{propositions}
\crefname{corollary}{corollary}{corollaries}
\crefname{theorem}{theorem}{theorems}
\crefname{enumi}{}{}
\crefname{assumption}{assumption}{Assumptions}
\crefname{equation}{}{}
\theoremstyle{nonumberplain}
\newtheorem{proof}{Proof}
\newcommand\bC{{\mathbb C}}
\newcommand\bF{{\mathbb F}}
\newcommand\bK{{\mathbb K}}
\newcommand\bN{{\mathbb N}}
\newcommand\bZ{{\mathbb Z}}
\newcommand\cP{{\mathcal P}}
\numberwithin{equation}{section}
\renewcommand{\theequation}{\thesection-\arabic{equation}}
\newcommand\numberthis{\addtocounter{equation}{1}\tag{\theequation}}
\newcommand{\qedhere}{\mbox{}\hfill\ensuremath{\blacksquare}}
\title{Recursive sequences attached to modular representations of finite groups}
\author{Alexandru Chirvasitu, Tara Hudson and Aparna Upadhyay}
\begin{document}

\date{}

\newcommand{\Addresses}{{
  \bigskip
  \footnotesize

  \textsc{Department of Mathematics, University at Buffalo, Buffalo,
    NY 14260-2900, USA}\par\nopagebreak \textit{E-mail address}:
  \texttt{achirvas@buffalo.edu}

  \medskip

  \textsc{Department of Mathematics, University at Buffalo, Buffalo,
    NY 14260-2900, USA}\par\nopagebreak \textit{E-mail address}:
  \texttt{tarahuds@buffalo.edu}

  \medskip
  
  \textsc{Department of Mathematics, University at Buffalo, Buffalo,
    NY 14260-2900, USA}\par\nopagebreak \textit{E-mail address}:
  \texttt{aparnaup@buffalo.edu}

}}

\maketitle

\begin{abstract}
  The core of a finite-dimensional modular representation $M$ of a finite group $G$ is its largest non-projective summand. We prove that the dimensions of the cores of $M^{\otimes n}$ have algebraic Hilbert series when $M$ is Omega-algebraic, in the sense that the non-projective summands of $M^{\otimes n}$ fall into finitely many orbits under the action of the syzygy operator $\Omega$. Similarly, we prove that these dimension sequences are eventually linearly recursive when $M$ is what we term $\Omega^{+}$-algebraic. This partially answers a conjecture by Benson and Symonds. Along the way, we also prove a number of auxiliary permanence results for linear recurrence under operations on multi-variable sequences.
\end{abstract}

\noindent {\em Key words: projective module; injective module; stable category; module core; linear recursive sequence; Hilbert series; rational power series; algebraic power series}

\vspace{.5cm}

\noindent{MSC 2020: 20C05; 16D40; 13F25; 11K31}


\section*{Introduction}

Let $G$ be a finite group, $k$ a field whose characteristic $p$ divides $|G|$, and $\mathrm{mod}~kG$ the category of $G$-modules, finite-dimensional over $k$. The paper \cite{bs} studies the asymptotic behavior as $n\to\infty$ of the {\it cores} of the tensor powers $M^{\otimes n}$ for $M\in\mathrm{mod}~kG$, where by definition
\begin{equation*}
  core(M) = core_G(M) :=\text{ the largest non-projective summand of $M$}.
\end{equation*}
The initial motivation for the present paper was \cite[Conjecture 13.3]{bs}, stating that the dimensions
\begin{equation}\label{eq:cns}
  c_n^G(M):=\dim core\left(M^{\otimes n}\right)
\end{equation}
form an eventually linearly recursive sequence. A likely more tractable version is \cite[Conjecture 14.2]{bs}, which restricts the class of $G$-modules under consideration. To make sense of that statement, recall (e.g. \cite[\S 1.5]{bnsn-rep1} or \cite[discussion following Lemma 2.7]{bs}) that for a finite-dimensional $G$-module $M$ one writes
\begin{itemize}
\item $\Omega M$ for the kernel of a {\it projective cover} $P\to M$;
\item $\Omega^{-1} M$ for the cokernel of an {\it injective hull} $M\to I$.
\end{itemize}
These are not quite endofunctors on the category of modules, because projective/injective covers are not functorial, but they do descend to endofunctors of the {\it stable module category}
\begin{equation*}
  \mathrm{stmod}~kG:=\mathrm{mod}~kG/\mathrm{proj},
\end{equation*}
defined as having the same objects as the category $\mathrm{mod}~kG$ of finite-dimensional $G$-modules and whose morphisms are obtained by annihilating those module morphisms that factor through projective (or equivalently, injective) objects; see e.g. \cite[Chapter I]{hap}.

In $\mathrm{stmod}~kG$ $\Omega$ and $\Omega^{-1}$ are indeed (as the notation suggests) mutually inverse functors:
\begin{equation*}
  \Omega\left(\Omega^{-1} M\right)\cong core(M)\cong \Omega^{-1}\left(\Omega M\right)
\end{equation*}
already holds in $\mathrm{mod}~kG$, and stabilization has the effect of identifying $M$ and its core. Given that
\begin{itemize}
\item we often ignore projective summands, as the problems under consideration require;
\item and $\Omega^{\pm 1}$ are endofunctors of $\mathrm{stmod}~kG$,
\end{itemize}
we will often treat them as functors, referring to them as such, composing them, etc. With this in place, recall \cite[Definition 14.1]{bs}:
\begin{definition}\label{def:omega}
  A $G$-module is {\it Omega-algebraic} (or $\Omega$-algebraic) if the non-projective indecomposable summands of the various tensor powers $M^{\otimes n}$ fall into finitely many orbits under the action of $\bZ$ via $\Omega$.
\end{definition}
This means that the functor $M\otimes-$ can be recast as a matrix $T$ with entries in the Laurent polynomial ring $\bZ[\Omega^{\pm 1}]$. We can restrict this further (see \Cref{se.omega}) for a fuller discussion:
\begin{definition}\label{def:omega+}
  $M\in \mathrm{mod}~kG$ is {\it Omega$^+$(or $\Omega^+$)-algebraic} if
  \begin{itemize}
  \item it is $\Omega$-algebraic in the sense of \Cref{def:omega}, and
  \item the representatives
    \begin{equation*}
      N_1=k,\ N_2,\ \cdots
    \end{equation*}
    for the $\Omega$-orbits of the simple subquotients of $M^{\otimes n}$, $n\in \bN$ can be chosen so that the entries of the matrix $T$ given by $M\otimes -$ are polynomials in $\bN[\Omega]$ (rather than {\it Laurent} polynomials).
  \end{itemize}

  We define {\it Omega$^-$-algebraic} modules similarly, substituting $\bN[\Omega^{-1}]$ for $\bN[\Omega]$ above.
\end{definition}

Our main results pertaining to these classes of modules are as follows. First, regarding \cite[Conjecture 14.2]{bs}, we have (\Cref{th:cj142-weak} and \Cref{cor:sufflrg})

\begin{theoremN}
  Let $M\in \mathrm{mod}~kG$. The sequence \Cref{eq:cns} is eventually linearly recursive if $M$ is either $\Omega^+$ or $\Omega^-$-algebraic.

  Consequently, the same holds if $M$ is of the form $\Omega^d N$ for $\Omega$-algebraic $N$ and sufficiently large (or sufficiently small) $d\in \bZ$.
  \qedhere
\end{theoremN}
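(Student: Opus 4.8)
The plan is to reduce the statement about $c_n^G(M)$ to the combinatorics of a single matrix $T$ with entries in $\bN[\Omega]$ (the $\Omega^+$-algebraic case; the $\Omega^-$ case is symmetric under swapping $\Omega\leftrightarrow\Omega^{-1}$, so I would prove one and transport). Fix representatives $N_1=k, N_2, \dots, N_r$ for the $\Omega$-orbits of simple subquotients appearing in the tensor powers, so that in the stable category $M\otimes N_i$ decomposes, modulo projectives, into a $\bN[\Omega]$-linear combination $\sum_j T_{ij}(\Omega)\, N_j$. Iterating, the class of $M^{\otimes n}$ in the free $\bN[\Omega^{\pm1}]$-module on the $N_i$ is read off from the $n$-th power $T^n$, applied to the vector recording the decomposition of $M$ itself. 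The dimension of the core is then an $\bN$-linear combination $\sum_i a_i(n)\,\dim N_i^{(n)}$ of dimensions of the {\it actual} modules realizing each $\Omega$-shift; concretely, if the coefficient of $N_i$ in $T^n(\text{initial vector})$ is a polynomial $\sum_d b_{i,d}(n)\,\Omega^d$, its contribution to $c_n^G(M)$ is $\sum_d b_{i,d}(n)\cdot \dim \Omega^d N_i$.

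The two key inputs are then: (i) the entries of $T^n$, as functions of $n$, assemble into a multivariate-type recursive structure, and (ii) the dimensions $\dim\Omega^d N_i$ are, for fixed $i$ and $d\to\infty$, eventually governed by a linear recursion — indeed $d\mapsto \dim\Omega^d N$ satisfies a linear recurrence with characteristic polynomial dividing that of the Cartan matrix of $kG$ (this is classical: minimal projective resolutions give $\dim\Omega^{d+1}N = (\text{dim of projective cover of }\Omega^d N) - \dim\Omega^d N$, and over a symmetric algebra the dimensions of projective covers of the finitely many simples span a lattice on which the shift acts linearly). For (i), the point of the $\bN[\Omega]$ (as opposed to $\bN[\Omega^{\pm1}]$) hypothesis is that no cancellation of $\Omega$-powers can occur: the exponent of $\Omega$ appearing in the $N_i$-component of $T^n$ lies in a bounded-in-$n/$ controlled range, and for each fixed shift $d$ the coefficient $b_{i,d}(n)$ is itself a bona fide linear-recursive function of $n$ — this is where I would invoke the auxiliary permanence results for linear recurrence under matrix powering and under the substitutions/convolutions induced by the bookkeeping above (the "permanence results for linear recurrence under operations on multi-variable sequences" advertised in the abstract). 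Combining a linear recursion in $n$ for each $b_{i,d}(n)$ with the linear recursion in $d$ for $\dim\Omega^d N_i$, and summing over the finitely many $i$ and over $d$, yields that $c_n^G(M)=\sum_{i,d} b_{i,d}(n)\dim\Omega^d N_i$ is eventually linearly recursive in $n$. This is precisely \Cref{th:cj142-weak}.

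For the "consequently" clause: if $M=\Omega^d N$ with $N$ $\Omega$-algebraic, then $M$ is automatically $\Omega$-algebraic (its tensor powers are $\Omega^{nd}$ times tensor powers of $N$, up to projectives, so they lie in the same finite union of $\Omega$-orbits). The matrix $T_M$ for $M\otimes-$ is then $\Omega^d$ times the matrix $T_N$ for $N\otimes-$ after matching up orbit representatives; choosing $d$ sufficiently large (more precisely, larger than the most negative $\Omega$-exponent occurring in $T_N$, which is finite since $T_N$ has Laurent-polynomial entries and there are finitely many of them) makes every entry of $T_M$ land in $\bN[\Omega]$, i.e. $M$ becomes $\Omega^+$-algebraic, and the first part applies. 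Symmetrically, $d$ sufficiently small (very negative) makes $M$ $\Omega^-$-algebraic. This is \Cref{cor:sufflrg}.

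The main obstacle I anticipate is step (i): controlling the $n$-dependence of the $\Omega$-graded pieces of $T^n$ and showing each graded coefficient is linearly recursive in $n$. Matrix powers $T^n$ with scalar entries are classically linearly recursive (Cayley–Hamilton), but here the entries live in $\bN[\Omega]$ and we must track a whole $\Omega$-grading simultaneously; naively $T^n$ has degree growing linearly in $n$, so the "vector of graded coefficients" is of unbounded length and one cannot cite finite-dimensional Cayley–Hamilton directly. The right move is to marry the two recursions first — i.e., apply the "evaluation" $N_i\mapsto$ (integer sequence $d\mapsto\dim\Omega^d N_i$, itself linearly recursive) to the identity $T^{n+1}=T\cdot T^n$ — turning the problem into one about a genuinely finite-dimensional linear system over a ring of linearly-recursive sequences, at which point the permanence lemmas (closure of linear recurrence under sum, Cauchy/convolution product corresponding to the $\Omega$-multiplication in $T$, and diagonal/specialization operations) finish it. Getting the bookkeeping of these two interacting recursions exactly right, and in particular verifying that the $\bN[\Omega]$ hypothesis is exactly what prevents the degree drift from destroying the finite-dimensionality, is the technical heart of the argument.
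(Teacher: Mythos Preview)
Your overall architecture matches the paper's: encode $M\otimes -$ as a matrix $T$ over $\bN[\Omega]$, read $c_n^G(M)$ off the first row of $T^n$ applied to the $N_j$, and feed in the eventual linear recursiveness of $d\mapsto\dim\Omega^d N_j$. The ``consequently'' clause is also handled exactly as the paper does (\Cref{cor:sufflrg}): for $d\gg 0$, $\Omega^d N$ is $\Omega^+$-algebraic because $T_{\Omega^d N}=\Omega^d T_N$ has entries in $\bN[\Omega]$.

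Where you diverge from the paper is precisely the passage you flag as ``the technical heart''. You try to resolve $T^n$ into its $\Omega$-graded pieces $b_{i,d}(n)$ and then worry (correctly) that the number of such pieces grows with $n$, so finite-dimensional Cayley--Hamilton over scalars does not apply. The paper sidesteps this entirely: Cayley--Hamilton holds over any commutative ring, in particular over $\bK[\Omega]$, so the \emph{polynomial-valued} sequence $(t^{(n)}_{1j})_n$ already satisfies a fixed-depth linear recurrence with coefficients in $\bK[\Omega]$. There is no need to unbundle the grading. What remains is then a clean combinatorial lemma (\Cref{pr:qpn}, packaged as \Cref{th:po}): if $(P_n)_n$ is an eventually linearly recursive sequence of polynomials and ${\bf a}=(a_d)_d$ is eventually linearly recursive, then $(P_n\triangleright{\bf a})_n=(\sum_d [\text{coeff of }x^d\text{ in }P_n]\,a_d)_n$ is eventually linearly recursive. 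That single lemma replaces your ``marrying the recursions'' paragraph.

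Two smaller points. First, your justification of input (ii) via the Cartan matrix is incomplete: the relation $\dim\Omega^{d+1}N=\dim P_d-\dim\Omega^d N$ does not close up to a recursion in $\dim\Omega^\bullet N$ alone, since $\dim P_d$ depends on the head of $\Omega^d N$, not on its dimension. The paper instead uses Evens's finite generation of $H^*(G,-)$ and Hilbert--Samuel to get that $d\mapsto\dim\Omega^d N$ is eventually quasipolynomial (\Cref{th.f-per}, \Cref{cor.rec}). Second, your explanation of why $\bN[\Omega]$ (rather than $\bN[\Omega^{\pm 1}]$) matters --- ``no cancellation'', ``bounded range'' --- is not the operative reason; there is no cancellation in $\bN$-coefficients anyway. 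The real issue is that for Laurent polynomials the convolution $\cP\triangleright({\bf a},{\bf b})$ of \Cref{def:laurtr} requires two target sequences and, after renormalizing to ordinary polynomials, forces a diagonal extraction that only yields algebraicity (\Cref{th:pab}), not rationality.
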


A sequence ${\bf a}=(a_n)$ is eventually linearly recursive precisely when its {\it Hilbert series}
\begin{equation*}
  H_{\bf a}(t) = \sum_n a_nt^n
\end{equation*}
is rational (see \Cref{se:rec} below for a lengthier discussion of linear recursion). This condition can be weakened in various ways, e.g. by requiring that $H_{\bf a}$ be only {\it algebraic} (i.e. that it satisfy a polynomial equation with coefficients in the field of rational functions in $t$). To return to $G$-modules, for $\Omega$- (rather than $\Omega^{\pm}$-)algebraic modules we have \Cref{th:wkagain}:

\begin{theoremN}
  For an $\Omega$-algebraic $M\in \mathrm{mod}~kG$ the sequence \Cref{eq:cns} has algebraic Hilbert series. \qedhere
\end{theoremN}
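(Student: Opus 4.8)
The plan is to encode the tensor-power operation as an action of the matrix $T\in M_r(\bZ[\Omega^{\pm1}])$ on the vector of dimension-tracking generating functions associated to the finitely many $\Omega$-orbits $N_1=k,N_2,\dots,N_r$. Concretely, for each pair $(i,j)$ record the multiplicity with which $\Omega^d N_j$ appears in $M\otimes N_i$, for every $d\in\bZ$; packaging these over $d$ gives the Laurent-polynomial entry $T_{ij}(\Omega)$. Writing $v^{(n)}$ for the column vector whose $j$-th entry is the ``$\Omega$-graded multiplicity'' polynomial of $N_j$ in (the core of) $M^{\otimes n}$, we have $v^{(n)} = T^n v^{(0)}$, with $v^{(0)}$ the indicator of $N_1=k$. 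The point of the $\Omega$-algebraic hypothesis is precisely that this is a \emph{finite} linear system over the commutative ring $\bZ[\Omega^{\pm1}]$.

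Next I would convert the $\Omega$-bookkeeping into honest numbers by evaluating dimensions. Each indecomposable $\Omega^d N_j$ has a dimension, and these dimensions themselves vary with $d$ in a controlled way: $\dim\Omega^{d+1}N = \dim P_N - \dim\Omega^d N$ where $P_N$ is the projective cover of $\Omega^d N$, so along each orbit the dimension sequence $(\dim\Omega^d N_j)_{d\in\bZ}$ satisfies a first-order inhomogeneous recurrence and hence has a rational bilateral generating function in a variable $x$ tracking $d$. Substituting these into the entries of $T^n$ and summing, the core dimension $c_n^G(M)$ becomes the coefficient extraction: $c_n^G(M) = \sum_j \big[\text{apply the ``dimension'' linear functional}\big]\big((T^n v^{(0)})_j\big)$, where the functional replaces each monomial $\Omega^d$ by the corresponding dimension and discards the (single) projective orbit if present. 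Assembling over $n$, the Hilbert series $H(t)=\sum_n c_n^G(M)t^n$ is obtained from $(I - tT)^{-1}v^{(0)}$ by applying this functional coordinatewise.

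The key structural input is then that $(I-tT)^{-1}$ has entries that are algebraic over $\bZ(t,\Omega)$: by Cramer's rule each entry is a ratio of polynomials in $t$ and $\Omega$, hence rational, a fortiori algebraic, in the two variables jointly. Applying the dimension functional amounts to (i) a rational substitution in the $\Omega$-variable coming from the rational orbit-dimension generating functions, and (ii) a diagonal/Hadamard-type specialization pairing the $\Omega$-grading of $T^n$ against the $d$-grading of the dimension functions. Step (ii) is where the permanence results advertised in the abstract — closure of algebraic power series under Hadamard products and under the relevant diagonal operations on multivariable series — do the real work: the pairing $\sum_{n,d} (\text{coeff of }\Omega^d\text{ in }(T^nv)_j)\,(\dim\Omega^dN_j)\,t^n$ is exactly a diagonal of the product of two algebraic bivariate series, and diagonals of algebraic (indeed rational) bivariate power series are algebraic. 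This reduces the theorem to the statement that a finite $\bZ(t)$-rational combination of such diagonals is algebraic, which follows from the ring/closure properties of algebraic power series.

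The main obstacle I anticipate is bookkeeping the two-sided ($d\in\bZ$) nature of the $\Omega$-grading against the one-sided ($n\in\bN$) nature of the Hilbert series: one must check that only finitely many $\Omega$-shifts contribute to each $M^{\otimes n}$ (true: $M^{\otimes n}$ is a fixed finite-dimensional module, so the bilateral sums in each $v^{(n)}$ are really finite Laurent polynomials), and that the passage from the formal identity $v^{(n)}=T^nv^{(0)}$ over $\bZ[\Omega^{\pm1}]$ to the generating-function identity over a ring of algebraic power series is legitimate — i.e. that no cancellation between positive and negative $\Omega$-powers is lost when one specializes $\Omega^d\mapsto\dim\Omega^dN_j$. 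Handling this cleanly will likely require splitting each orbit's dimension generating function into its ``$d\ge 0$'' and ``$d<0$'' parts, each rational, and treating the resulting (finitely many) diagonals separately before recombining; the projective orbit, if present, is simply omitted from the functional since it contributes nothing to the core.
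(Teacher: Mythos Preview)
Your overall architecture matches the paper's: encode $M\otimes-$ as a matrix $T$ over $\bZ[\Omega^{\pm1}]$, note that the entries of $T^n$ (equivalently, of $(I-tT)^{-1}$) are governed by a rational bivariate object via Cayley--Hamilton/Cramer, pair against the dimension sequences $(\dim\Omega^d N_j)_d$, and appeal to the Furstenberg--Stanley theorem that diagonals of bivariate rational series are algebraic. The paper packages this as ``linearly recursive Laurent-polynomial sequence convolved with linearly recursive scalar sequences'' (its Theorem~1.13 feeding into Theorem~3.5), while you phrase it in generating-function language; the substance is the same.

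There is, however, one genuine gap. Your claim that $(\dim\Omega^d N_j)_d$ has a rational generating function because $\dim\Omega^{d+1}N=\dim P-\dim\Omega^d N$ does not follow from that recurrence alone: the inhomogeneity $\dim P$ is the dimension of the projective cover of $\Omega^d N$, which \emph{varies with $d$}. A first-order recurrence with arbitrary inhomogeneous term does not produce a rational generating function. The paper supplies the missing ingredient via cohomology: by Evens' finite-generation theorem, $\mathrm{Ext}^*(S,N_j)$ is a finitely generated module over the Noetherian graded ring $H^*(G,k)$, so its Hilbert series is rational with root-of-unity poles; this forces the multiplicities of the indecomposable projectives in a minimal resolution of $N_j$ to be eventually quasipolynomial, and \emph{then} your recurrence yields rationality of $(\dim\Omega^d N_j)_d$ (separately for $d\ge 0$ and $d<0$). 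You need this input somewhere.

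On the bilateral-$\Omega$ bookkeeping you flag as the main obstacle: you are right that this is where the work is, and splitting into $d\ge0$ and $d<0$ is the correct move. But note that after splitting, the ``positive half'' of a rational function of $(t,\Omega)$ need not remain rational (already $\sum_n(\Omega+\Omega^{-1})^nt^n$, truncated to $\Omega^{\ge0}$, has central binomial coefficients along $\Omega^0$). The paper's device for this is to multiply $P_n(\Omega)$ by $\Omega^{An+B}$ for suitable $A,B$ so that everything becomes an honest polynomial, at the cost of making the substitution index $n$-dependent; the resulting object is then handled by Hadamard-product and partial-sum closure for rational $2$-sequences, with a \emph{single} diagonal extraction at the end. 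Your proposal to ``treat the resulting (finitely many) diagonals separately'' is on the right track but would need a comparable mechanism to keep everything rational up to the final diagonal.
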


This will require a bit of a detour, as we need various results to the effect that recursion and related properties (e.g. having an algebraic Hilbert series) are invariant under various constructions involving sequences or, more generally, {\it multi-sequences} (\Cref{subse:mul}). Such results are presumably of some independent interest, and they appear throughout \Cref{se:rec,se:conv}. A small sampling (\Cref{def:op} and \Cref{pr:qpn}):

\begin{propositionN}
  Consider
  \begin{itemize}
  \item an eventually-linearly-recursive sequence $(P_n)_n$ of polynomials in $x$ over a field $\bK$;
  \item an eventually-linearly-recursive sequence ${\bf a}=(a_n)_n$ in $\bK$,
  \end{itemize}
  and denote by
  \begin{equation*}
    P\triangleright{\bf a} = \sum_k c_k a_k
  \end{equation*}
  the {\bf convolution} of a polynomial $P(x)=\sum c_k x^k$ with ${\bf a}$.

  Then, the sequence $(P_n\triangleright {\bf a})_n$ is eventually linearly recursive. \qedhere
\end{propositionN}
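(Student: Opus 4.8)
The plan is to recast the convolution $P\triangleright\mathbf b$ as a fixed linear functional applied to an operator built from $P$ acting on $\mathbf b$, and then to push the linear recursion of $(P_n)$ through the finite-dimensional $\bK[x]$-module generated by $\mathbf a$. Let $S\colon\bK^{\bN}\to\bK^{\bN}$ be the left-shift operator, $S(b_0,b_1,\dots)=(b_1,b_2,\dots)$, and regard $\bK^{\bN}$ as a $\bK[x]$-module with $x$ acting by $S$. Directly from the definition, for a polynomial $Q(x)=\sum_k c_k x^k$ and $\mathbf b\in\bK^{\bN}$ one has $Q\triangleright\mathbf b=\bigl(Q(S)\mathbf b\bigr)_0$, the $0$-th entry of $Q(S)\mathbf b$ (a finite sum, since $Q$ is a polynomial); equivalently $Q\triangleright\mathbf b=\mathrm{ev}_0(Q\cdot\mathbf b)$ for the $\bK$-linear functional $\mathrm{ev}_0\colon\bK^{\bN}\to\bK$ reading off the first coordinate. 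The proof then proceeds in four steps: (i) the submodule $V:=\bK[x]\cdot\mathbf a\subseteq\bK^{\bN}$ is finite-dimensional over $\bK$; (ii) the recursion for $(P_n)$ transports to a constant-coefficient recursion for $v_n:=P_n(S)\mathbf a$ inside $V$; (iii) $(v_n)$ is linearly recursive over $\bK$; (iv) apply $\mathrm{ev}_0$.

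Step (i) is the classical dictionary between linear recurrences and $\bK[x]$-module structure recalled in \Cref{se:rec}: $\mathbf a$ eventually linearly recursive means $S^N\mathbf a$ satisfies a genuine length-$d$ constant-coefficient recursion for some $N$, whence $\bK[x]\cdot S^N\mathbf a=\mathrm{span}_{\bK}\{S^j\mathbf a:N\le j<N+d\}$ and therefore $V\subseteq\mathrm{span}_{\bK}\{S^j\mathbf a:0\le j<N+d\}$ is finite-dimensional. Being $S$-stable, $V$ is stable under every $q(S)$, $q\in\bK[x]$, and $q(S)|_V\in\End_\bK(V)$.

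For steps (ii)--(iii), write $v_n=P_n(S)\mathbf a\in V$ and use the hypothesis: there are $q_1,\dots,q_e\in\bK[x]$ and $N_0$ with $P_{n+e}=\sum_{i=1}^e q_i\,P_{n+e-i}$ for $n\ge N_0$. Acting on $\mathbf a$ and using commutativity of $\bK[x]$ gives $v_{n+e}=\sum_{i=1}^e q_i(S)|_V\,v_{n+e-i}$ for $n\ge N_0$, a linear recursion for the $V$-valued sequence $(v_n)$ with \emph{constant} coefficients $q_i(S)|_V\in\End_\bK(V)$. Linearizing as usual, the $V^{\oplus e}$-valued sequence $w_n:=(v_n,\dots,v_{n+e-1})$ satisfies $w_{n+1}=C w_n$ for $n\ge N_0$ with $C\in\End_\bK(V^{\oplus e})$ the block companion operator (identity blocks on the superdiagonal, the $q_i(S)|_V$ along the bottom row); since $V^{\oplus e}$ is finite-dimensional over $\bK$, Cayley--Hamilton yields a monic $\chi\in\bK[t]$ with $\chi(C)=0$, and feeding $\chi(C)C^m=0$ into $w_{N_0}$ shows $(w_n)_{n\ge N_0}$, hence its first coordinate $(v_n)$, is linearly recursive over $\bK$ with characteristic polynomial $\chi$. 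Finally, for step (iv), $P_n\triangleright\mathbf a=(v_n)_0=\mathrm{ev}_0(v_n)$, and the image of an eventually linearly recursive sequence under a fixed linear map is eventually linearly recursive, so $(P_n\triangleright\mathbf a)_n$ is as claimed.

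The one genuinely non-formal ingredient is step (i): the passage from ``$(a_n)$ eventually linearly recursive'' to ``$\bK[x]\cdot\mathbf a$ finite-dimensional over $\bK$''; everything afterwards is the standard linearization showing that a $\bK[x]$-linearly-recursive sequence valued in a $\bK[x]$-module that is finite over $\bK$ is already $\bK$-linearly recursive. (If one reads the recursion for $(P_n)$ as having coefficients in $\bK$ rather than in $\bK[x]$, the argument collapses: $P\mapsto P\triangleright\mathbf a$ is $\bK$-linear, so the recursion transfers verbatim with no hypothesis on $\mathbf a$ needed — the module $V$ exists precisely to accommodate genuinely polynomial recursion coefficients, which is where the recursiveness of $\mathbf a$ is used.)
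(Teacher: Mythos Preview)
Your argument is correct and takes a genuinely different route from the paper. The paper gives two proofs: the first reduces, via the closed form $a_n=\sum_s p_s(n)\gamma_s^n$ of \Cref{th:st-rec}\,\Cref{item:3}, to the elementary case $a_n=n^d\gamma^n$, and then handles increasing $d$ by repeatedly differentiating the polynomial recursion \Cref{eq:recpol}; the alternative proof packages the coefficients of the $P_n$ into a rational $2$-sequence and invokes the ``matrix product'' permanence result \Cref{th:prodisrat}\,\Cref{item:matpr}. Your approach instead exploits condition \Cref{item:4} of \Cref{th:st-rec} directly: the cyclic $\bK[x]$-module $V=\bK[x]\cdot\mathbf a$ is finite-dimensional, so the polynomial recursion for $(P_n)$ becomes, after acting on $\mathbf a$, a recursion with constant operator coefficients on a finite-dimensional space, and Cayley--Hamilton on the block companion operator finishes. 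This is arguably the cleanest of the three arguments and makes transparent exactly where the recursiveness of $\mathbf a$ enters (your final parenthetical). The trade-off is that it does not immediately adapt to the Laurent setting of \Cref{th:pab}: there one cannot act by $S^{-1}$ on $\bK^{\bN}$, so there is no obvious finite-dimensional $\bK[x,x^{-1}]$-module to work in, whereas the Hilbert-series machinery of the alternative proof is what the paper leverages in \Cref{subse:spec}.
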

Such convolution operations feature prominently in the proofs of the above-mentioned theorems, and they form the focus of \Cref{se:conv} and part of \Cref{se:rec}.

In \Cref{se.poly} we prove that various generalizations of $c_n^G(M)$ are eventually linearly recursive or algebraic, broadening the scope of the discussion. Specifically, an aggregate of \Cref{th:po} and \Cref{th:lrnt} reads

\begin{theoremN}
  Let $M\in\mathrm{mod}~kG$ and $F$ a functor from $\mathrm{mod}~kG$ to finite-dimensional vector spaces that is either exact or of the form $\mathrm{Hom}_G(S,-)$ for a simple $G$-module $S$.
  \begin{enumerate}[(a)]
  \item If $(P_n)_n$ is an eventually linearly recursive sequence of polynomials in $\bN[x]$ then the sequences
    \begin{equation*}
      n\mapsto \dim F\left(P_n\Omega M\right)\quad\text{or}\quad \dim F\left(P_n\Omega^{-1} M\right)
    \end{equation*}
    are eventually linearly recursive. 
  \item On the other hand, if $P_n$ are {\bf Laurent} polynomials, the same sequences have algebraic Hilbert series. \qedhere
  \end{enumerate}
\end{theoremN}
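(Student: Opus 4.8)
The plan is to read $\dim F(P_n\Omega M)$ as a convolution of the polynomial $P_n$ with one fixed sequence of dimensions, prove that this fixed sequence is eventually linearly recursive in each direction, and then feed everything into the convolution–permanence results of \Cref{se:rec,se:conv}, chiefly \Cref{pr:qpn}. \emph{Reduction to a convolution.} Both an exact functor and $\mathrm{Hom}_G(S,-)$ are additive, hence commute with finite direct sums; writing $P_n=\sum_k c_k^{(n)}x^k$, with $k$ ranging over $\bN$ in (a) and over $\bZ$ in (b) and $\Omega^{-1}$ denoting the cosyzygy, the module $P_n\Omega M$ is $\bigoplus_k\left(\Omega^k M\right)^{\oplus c_k^{(n)}}$, so that
\[
  \dim F(P_n\Omega M)=\sum_k c_k^{(n)}\,d_k=P_n\triangleright{\bf d},\qquad d_k:=\dim F\left(\Omega^k M\right),
\]
and likewise $\dim F(P_n\Omega^{-1}M)$ is the convolution of $P_n$ with the reflected sequence $k\mapsto\dim F\left(\Omega^{-k}M\right)$. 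Thus it suffices to know that ${\bf d}$ (and its reflection) is eventually linearly recursive, and then to apply a convolution–permanence statement.

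\emph{The sequence ${\bf d}$ is eventually linearly recursive.} Fix a minimal projective resolution $\cdots\to P_1\to P_0\to M\to 0$. Since $kG$ is a symmetric (in particular self-injective) algebra, each $P_i$ is also injective, and the multiplicity in $P_i$ of the projective cover of a simple $S_j$ is proportional to $\dim_k\mathrm{Ext}^i_{kG}(M,S_j)$. Because $\mathrm{Ext}^*_{kG}(M,S_j)$ is a finitely generated graded module over the Noetherian graded ring $H^*(G;k)=\mathrm{Ext}^*_{kG}(k,k)$ (Evens--Venkov), graded Hilbert--Serre makes each sequence $i\mapsto\dim_k\mathrm{Ext}^i_{kG}(M,S_j)$ eventually linearly recursive, hence so is $i\mapsto\dim F(P_i)$, a fixed $\bN$-linear combination of the constants $\dim F(P(S_j))$. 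Applying $F$ to the short exact sequences $0\to\Omega^k M\to P_{k-1}\to\Omega^{k-1}M\to 0$, if $F$ is exact we obtain $d_k+d_{k-1}=\dim F(P_{k-1})$; if $F=\mathrm{Hom}_G(S,-)$, the associated long exact sequence together with $\mathrm{Ext}^1_{kG}(S,P_{k-1})=0$ gives $d_k+d_{k-1}=\dim\mathrm{Hom}_G(S,P_{k-1})+\dim\mathrm{Ext}^1_{kG}(S,\Omega^k M)$, and by dimension shifting together with Tate duality (valid for the symmetric algebra $kG$) the last summand equals $\dim\mathrm{Ext}^{k-2}_{kG}(M,S)$ once $k$ is large, again eventually linearly recursive by Evens--Venkov and Hilbert--Serre. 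In either case $(d_k)_k$ satisfies a first-order recursion $d_k+d_{k-1}=g_k$ with eventually-linearly-recursive forcing $g$, so $(1+t)\sum_k d_k t^k$ is a rational function and ${\bf d}$ is eventually linearly recursive; running the same argument with a minimal injective resolution (again using self-injectivity of $kG$) shows $k\mapsto\dim F\left(\Omega^{-k}M\right)$ is eventually linearly recursive as well.

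\emph{Assembling the conclusions.} For (a), $(P_n)_n$ is an eventually-linearly-recursive sequence of honest polynomials; replacing ${\bf d}=(d_k)_{k\ge 0}$ by a genuinely linearly recursive sequence agreeing with it for $k\ge k_0$ alters $P_n\triangleright{\bf d}$ by a finite $\bK$-linear combination of the coefficient sequences $(c_k^{(n)})_n$, each of which is eventually linearly recursive because $(P_n)_n$ is, so \Cref{pr:qpn} gives that $n\mapsto\dim F(P_n\Omega M)$ is eventually linearly recursive; the $\Omega^{-1}M$ statement is identical. For (b), split the Laurent polynomial as $P_n=P_n^++P_n^-$ into its nonnegative- and negative-exponent parts. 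The bivariate series $\sum_n P_n(x)t^n$ is rational, and extracting the part with nonnegative (resp.\ negative) powers of $x$ is an algebraic, diagonal-type operation (see \Cref{se:rec,se:conv}), so the polynomial sequence $(P_n^+)_n$ and, after rewriting in $x^{-1}$, the polynomial sequence $(\widetilde{P_n^-})_n$ have algebraic bivariate generating function; convolving $(P_n^+)_n$ against $(d_k)_{k\ge 0}$ and $(\widetilde{P_n^-})_n$ against $(d_{-k})_{k\ge 0}$, the algebraic analogue of \Cref{pr:qpn} shows that each contribution has algebraic Hilbert series, and hence so does their sum $n\mapsto\dim F(P_n\Omega M)$, since algebraic power series form a ring; the $\Omega^{-1}M$ case is the same.

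\emph{Main obstacle.} The crux is the middle step for $F=\mathrm{Hom}_G(S,-)$: left-exactness injects the correction term $\dim\mathrm{Ext}^1_{kG}(S,\Omega^k M)$ into the recursion, and controlling it is precisely where self-injectivity of $kG$, Tate duality, and finite generation of cohomology are needed. A secondary point worth flagging is that in (b) the ``nonnegative part'' of a rational bivariate series is genuinely only algebraic, which is exactly why (b) weakens ``eventually linearly recursive'' to ``algebraic Hilbert series''.
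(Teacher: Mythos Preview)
Your reduction to a convolution and your proof that $k\mapsto\dim F(\Omega^k M)$ is eventually linearly recursive are correct and match the paper's strategy (the paper packages this as \Cref{cor.rec}, resting on \Cref{pr.lsoc-poly} and \Cref{th.f-per}). For $F=\mathrm{Hom}_G(S,-)$ you take a longer route than necessary: rather than tracking the correction term $\dim\mathrm{Ext}^1(S,\Omega^k M)$ through Tate duality, the paper simply observes that for $n\ge 1$ one has $\dim\mathrm{Hom}(S,\Omega^{-n}M)=\dim\mathrm{Ext}^n(S,M)$ directly (since $\Omega^{-n}M$ has no injective summand and $S$ is simple, stable and ordinary $\mathrm{Hom}$ coincide), and then invokes Evens--Venkov. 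Either way part (a) goes through, and the final appeal to \Cref{pr:qpn} is exactly what the paper does in \Cref{th:po}.

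Part (b) has a genuine gap. Once you split $P_n=P_n^++P_n^-$, the sequences $(P_n^+)_n$ and $(\widetilde{P_n^-})_n$ are \emph{not} linearly recursive any more: the recursion $P_{n+T}=\sum_i c_i(x)P_{n+T-i}$ has Laurent coefficients $c_i$, so taking nonnegative parts does not commute with it. You correctly note their bivariate generating functions are only algebraic, but then you invoke an ``algebraic analogue of \Cref{pr:qpn}'', i.e.\ a statement of the form ``$(P_n)$ with algebraic bivariate generating function and ${\bf a}$ C-finite $\Rightarrow$ $\cP\triangleright{\bf a}$ algebraic''. Nothing in \Cref{se:rec,se:conv} establishes this over a general field; the needed Hadamard-type closure (\Cref{algebraiccharp}) is proved there only in positive characteristic, and in characteristic zero Hadamard products of algebraic series can fail to be algebraic. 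The paper avoids this entirely in \Cref{th:pab}: instead of splitting, it multiplies $P_n$ by $x^{An+B}$ so that the $P_n$ become honest polynomials \emph{while remaining linearly recursive} (hence the bivariate series stays rational), at the cost of turning $\cP\triangleright{\bf a}$ into a shifted convolution $\sum_m c_{m,n}a_{m-q(n)}$ over a range $q(n)\le m\le Dn$. That shifted convolution is then assembled from rational 2-sequences via \Cref{th:prodisrat}, with a single diagonal (algebraic by Furstenberg) taken only at the very end. This is the missing idea in your argument for (b).
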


Finally, \Cref{se.examples} contains examples of sequences $c_n^G(M)$ and analogues for specific modules/groups, illustrating the main results outlined above.

\subsection*{Some notation}

We write $\bN$ for $\bZ_{\ge 0}$. Throughout,
\begin{itemize}
\item $G$ is a finite group;
\item $k$ is a field of positive characteristic $p$ (typically dividing $|G|$; otherwise most of the discussion below will be trivial);
\item $\mathrm{Vect}$ (respectively $\mathrm{Vect}^f$) means (finite-dimensional) $k$-vector spaces,
\item and as in the Introduction, $\mathrm{mod}~kG$ denotes the category of $k$-finite-dimensional $G$-modules.
\end{itemize}
We write $\ell(M)$ for the length of a module $M$, so $\ell=\dim$ for plain vector spaces.

Recall the quantities $c_n^G(M)$ from the Introduction (\Cref{eq:cns}). Prompted by \cite[Remark 2.5(i)]{bs} on the resilience of the invariant $\gamma_G(M)$ to replacing $c_n^G(M)$ with the length or the length of the socle of $core\left(M^{\otimes n}\right)$ we write
\begin{itemize}
\item $d_n^G(M)$ for the length of the socle of $core\left(M^{\otimes n}\right)$;
\item $l_n^G(M)$ for the length of $core\left(M^{\otimes n}\right)$;
\item $s_n^G(M)$ for the number of indecomposable summands of $core\left(M^{\otimes n}\right)$.
\end{itemize}

\subsection*{Acknowledgements}

We are grateful for numerous highly instructive exchanges with David Hemmer.

AC acknowledges support through NSF grant DMS-2001128.

\section{Generalities on recursion}\label{se:rec}

\subsection{Multi-sequences}\label{subse:mul}

\cite[Chapter 4]{enum1} is a good reference for the material on linear recursive sequences needed below. Since we are interested in sequences (and sometimes multi-sequences, i.e. $a_{m,n,\cdots}$) of either complex numbers or polynomials, it will be convenient to keep in mind that most of the discussion below makes sense over commutative rings.

\begin{definition}\label{def:shifts}
  Let $R$ be a commutative ring and $r$ a positive integer.

  The elements of the product space $X=R^{\bN^r}$ are {\it $R$-valued} {\it $r$-sequences}, or {\it $r$-dimensional (multi-) sequences}. When we do not specify $r$ we use the phrase {\it multi-sequence}.
  
  For $1\le i\le r$, the {\it $i^{th}$ shift $S_i$} on $X$ is the operator that shifts the $i^{th}$ index of a multi-sequence up (and hence shifts the multi-sequence ``leftward'' along the $i^{th}$ direction).

  When $r=1$ (i.e. we work with plain sequences) we will often write $S$ for the only shift operator $S_1$.

  For a tuple ${\bf n}=(n_1,\cdots,n_r)$ of non-negative integers, we write $S^{\bf n}$ for the product
  \begin{equation*}
    S_1^{n_1}\cdots S_r^{n^r}.
  \end{equation*}
\end{definition}

To illustrate:

\begin{example}
  For $r=1$, given a sequence ${\bf a}=(a_n)_n$, its shift $S{\bf a}$ is $(a_{n+1})_{n}$.

  On the other hand, for $r=2$ and ${\bf a}=(a_{m,n})_{m,n}$ we have
  \begin{equation*}
    S_2{\bf a} = (a_{m,n+1})_{m,n}. 
  \end{equation*}
\end{example}

The fundamental result, to be used extensively below, is the characterization of (eventually) linear recursive sequences given in \cite[Theorem 4.1.1]{enum1}. Paraphrasing that result slightly, extending it to algebraically-closed fields more general than $\bC$ (e.g. \cite[Theorems 4.1 and 4.3]{kp}), and supplementing it with a shift criterion, we have 

\begin{theorem}\label{th:st-rec}
  Let ${\bf a}=(a_n)_n$ be a sequence valued in a field $\bK$. The following conditions are equivalent.
  \begin{enumerate}[(a)]
  \item\label{item:1} The Hilbert series
    \begin{equation*}
      H_{\bf a}(t):=\sum_{n}a_n t^n
    \end{equation*}
    attached to the sequence is a rational function in $t$. 
  \item\label{item:2} The sequence is eventually linear recursive, in the sense that
    \begin{equation*}
      a_{n+T}+c_1a_{n+T-1}+\cdots+c_T a_n=0
    \end{equation*}
    for some complex numbers $c_i$ and sufficiently large $n$. 
  \item\label{item:3} There are polynomials $p_s$ and elements $\gamma_s$ of the algebraic closure $\overline{\bK}\supseteq \bK$ such that
    \begin{equation*}
      a_n = \sum_s p_s(n)\gamma_s^n
    \end{equation*}
    for sufficiently large $n$.
  \item\label{item:4} The vector subspace of $\bK^{\bN}$ spanned by the shifts $S^d{\bf a}$, $d\in \bN$ is finite-dimensional. \qedhere
  \end{enumerate}  
\end{theorem}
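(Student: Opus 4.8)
The plan is to prove the cycle $\text{(a)}\Rightarrow\text{(b)}\Rightarrow\text{(c)}\Rightarrow\text{(a)}$ --- which over $\bC$ is the content of \cite[Theorem 4.1.1]{enum1} and over an arbitrary field (so that the $\gamma_s\in\ol{\bK}$ in (c) make sense) is \cite[Theorems 4.1 and 4.3]{kp} --- and then to adjoin $\text{(b)}\Leftrightarrow\text{(d)}$ by a short direct argument. Throughout I would keep in mind that all four conditions are unaffected by altering ${\bf a}$ in finitely many terms: (a) changes $H_{\bf a}$ by a polynomial, (b) and (c) are statements about large $n$ only, and (d) changes $\operatorname{span}\{S^d{\bf a}:d\in\bN\}$ by a finite-dimensional subspace. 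This is what lets the ``eventual'' phrasing coexist with the honest-recursion formulations in the references.

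For $\text{(a)}\Rightarrow\text{(b)}$ I would write $H_{\bf a}=P/Q$ with $Q=\sum_{i=0}^{T}q_i t^i$ and $q_0\ne 0$, and compare coefficients of $t^n$ in $Q\,H_{\bf a}=P$: for $n>\deg P$ this gives $\sum_{i=0}^{T}q_i a_{n-i}=0$, a linear recursion after normalizing by $q_0$. For $\text{(b)}\Rightarrow\text{(c)}$ one passes to $\ol{\bK}$, factors the characteristic polynomial $x^{T}+c_1x^{T-1}+\cdots+c_T$, and writes $a_n$ as a combination of the generalized eigensolutions $p_s(n)\gamma_s^n$ attached to the roots $\gamma_s$, with $\deg p_s$ below the multiplicity of $\gamma_s$; this is precisely \cite[Theorems 4.1 and 4.3]{kp}. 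For $\text{(c)}\Rightarrow\text{(a)}$ one notes that each $\sum_{n\ge N}p_s(n)(\gamma_s t)^n$ is rational in $t$ (obtained from $1/(1-\gamma_s t)$ by applying $t\frac{d}{dt}$ repeatedly and subtracting off a polynomial tail), so $H_{\bf a}$, a finite sum of such series plus the polynomial $\sum_{n<N}a_n t^n$, is rational.

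The new ingredient is $\text{(b)}\Leftrightarrow\text{(d)}$. If $a_{n+T}=-\sum_{i=1}^{T}c_i a_{n+T-i}$ holds for all $n\ge N$, then $S^{d+T}{\bf a}\in\operatorname{span}\{S^{d}{\bf a},\dots,S^{d+T-1}{\bf a}\}$ for every $d\ge N$, so induction on $d$ shows every shift $S^{d}{\bf a}$ lies in the finite-dimensional space $\operatorname{span}\{{\bf a},S{\bf a},\dots,S^{N+T-1}{\bf a}\}$, giving (d). Conversely, if $V:=\operatorname{span}\{S^{d}{\bf a}:d\in\bN\}\subseteq\bK^{\bN}$ is finite-dimensional, then $S$ sends each spanning vector to the next and hence restricts to a $\bK$-linear endomorphism of $V$; by Cayley--Hamilton its characteristic polynomial $\chi(x)=x^{T}+c_1x^{T-1}+\cdots+c_T\in\bK[x]$ satisfies $\chi(S)=0$ on $V$, and evaluating $\chi(S){\bf a}=0$ in the $n$-th coordinate yields $a_{n+T}+c_1a_{n+T-1}+\cdots+c_Ta_n=0$ for all $n\ge0$ --- in fact an honest, not merely eventual, recursion.

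I do not anticipate a real obstacle: the substance is classical, and the only work is the shift criterion, whose one delicate point is that the recursion extracted in $\text{(d)}\Rightarrow\text{(b)}$ must be monic, which is automatic because a characteristic polynomial is monic. The sole place where I would lean on the literature rather than reprove something is the passage to $\ol{\bK}$ in $\text{(b)}\Rightarrow\text{(c)}$, i.e.\ the Jordan-form solution of linear recursions over an algebraically closed field, for which \cite[Theorems 4.1 and 4.3]{kp} suffices.
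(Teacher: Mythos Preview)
Your proposal is correct and matches the paper's treatment: the paper does not prove this theorem at all but simply records it as a paraphrase of \cite[Theorem 4.1.1]{enum1} and \cite[Theorems 4.1 and 4.3]{kp}, supplemented by the shift criterion \Cref{item:4}, which is exactly the decomposition you give. Your explicit argument for $\text{(b)}\Leftrightarrow\text{(d)}$ via $S$-invariance of $V$ and Cayley--Hamilton is the natural one and in fact supplies more detail than the paper, which leaves the shift criterion unproved.
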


We will soon see that for multi-sequences things are more complicated. For instance, the rationality of the Hilbert series (\Cref{item:1} of \Cref{th:st-rec}) and the finite-dimensionality of the space of shifts (condition \Cref{item:4}) part ways.

For a start, we identify a particularly well-behaved class of multi-sequences: the {\it multi-C-finite} ones of \cite[\S 2.2.2]{z-hol}. 

\begin{theorem}\label{th:multirat}
  Let ${\bf a}=(a_{\bf n})_{{\bf n}=(n_1,\cdots,n_r)}$ be a multi-sequence valued in a field $\bK$. The following conditions are equivalent.
  \begin{enumerate}[(a)]
  \item\label{item:5} The Hilbert series
    \begin{equation*}
      H_{\bf a}(t_1,\cdots,t_r):=\sum_{\bf n}a_{\bf n} t_1^{n_1}\cdots t_r^{n_r}
    \end{equation*}
    is a function of the form
    \begin{equation*}
      \frac{P(t_1,\cdots,t_r)}{Q_1(t_1)\cdots Q_r(t_r)}
    \end{equation*}
    for an $r$-variable polynomial $P$ and single-variable polynomials $Q_i$.
  \item\label{item:10} There are $r$-variable polynomials $p_s$ and tuples
    \begin{equation*}
      {\bf \gamma}_s = (\gamma_{s,1},\cdots,\gamma_{s,r})\in \overline{\bK}^r
    \end{equation*}
    such that
    \begin{align*}
      a_{\bf n} = a_{n_1,\cdots,n_r} &= \sum_s p_s(n_1,\cdots,n_r){\bf \gamma}_s^{\bf n}\\
                                     &=\sum_s p_s(n_1,\cdots,n_r)\gamma_{s,1}^{n_1}\cdots\gamma_{s,r}^{n_r}\numberthis\label{eq:gammas}
    \end{align*}
    for all but finitely many tuples ${\bf n}=(n_1,\cdots,n_r)$.
  \item\label{item:6} The vector subspace of $\bK^{\bN^r}$ spanned by the shifts $S^{\bf n}{\bf a}$, ${\bf n}\in \bN^r$ is finite-dimensional.
  \item\label{item:11} For each $1\le i\le r$, the vector subspace of $\bK^{\bN^r}$ spanned by the shifts $S_i^{n}{\bf a}$, ${n}\in \bN$ is finite-dimensional.
  \end{enumerate}  
\end{theorem}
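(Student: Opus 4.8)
The plan is to establish the chain of equivalences (c) $\Leftrightarrow$ (d) $\Leftrightarrow$ (a) $\Leftrightarrow$ (b); the only truly quick step is (c) $\Rightarrow$ (d), and everything else rests on the classical dictionary between linear recurrences and denominators of generating functions, now applied one coordinate at a time. For (c) $\Leftrightarrow$ (d): the implication (c) $\Rightarrow$ (d) is immediate, since for fixed $i$ the family $\{S_i^n\mathbf a:n\in\bN\}$ sits inside $\{S^{\mathbf n}\mathbf a:\mathbf n\in\bN^r\}$. For the converse, finite-dimensionality of $W_i:=\mathrm{span}\{S_i^n\mathbf a\}$ forces an \emph{exact} monic relation $S_i^{T_i}\mathbf a=\sum_{j<T_i}\lambda_{i,j}S_i^j\mathbf a$ (a nontrivial dependence among $\mathbf a,S_i\mathbf a,\dots,S_i^{\dim W_i}\mathbf a$, solved for its top term, propagates to all higher powers upon applying $S_i$); because $S_1,\dots,S_r$ commute, one then uses these $r$ relations in succession to rewrite any $S^{\mathbf n}\mathbf a=S_1^{n_1}\cdots S_r^{n_r}\mathbf a$ as a $\bK$-combination of the $\prod_i T_i$ sequences $S_1^{j_1}\cdots S_r^{j_r}\mathbf a$ with $0\le j_i<T_i$, giving (c). I expect this passage from ``finite span in each direction separately'' to ``finite joint span'' to be the real content of the theorem: it genuinely uses the commutation of the shifts, and its analogue fails for looser notions of multi-sequence — precisely the divergence the text flags just before the statement.

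For (c)/(d) $\Leftrightarrow$ (a): from (d) and the exact relations $q_i(S_i)\mathbf a=0$ extracted above ($q_i$ monic of degree $T_i$), a direct computation shows that multiplying $H_{\mathbf a}$ by the reciprocal polynomial $\wt q_i(t_i):=t_i^{T_i}q_i(1/t_i)$ annihilates every coefficient of $t^{\mathbf m}$ with $m_i\ge T_i$; carrying this out for $i=1,\dots,r$ in turn — and checking that each multiplication does not disturb the vanishing already achieved in the earlier directions — leaves $\wt q_1(t_1)\cdots\wt q_r(t_r)\,H_{\mathbf a}(t)$ with only finitely many nonzero coefficients, i.e. a polynomial, which is (a) with $Q_i=\wt q_i$. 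Conversely, given (a) one cancels common powers of each $t_i$ so that $Q_i(0)\ne0$; then $Q_i(t_i)H_{\mathbf a}=P/\prod_{j\ne i}Q_j(t_j)$ has no $t_i$ in the denominator, hence is a polynomial in $t_i$, and the vanishing of its high-order $t_i$-coefficients yields an eventual linear recurrence for the $t_i$-slices of $\mathbf a$; the same bookkeeping as in (d) $\Rightarrow$ (c) then makes $W_i$ finite-dimensional, giving (d) and thence (c). One could alternatively invoke the single-variable \Cref{th:st-rec} with $\mathbf a$ regarded as a sequence in the index $n_i$ valued in the power-series ring in the remaining variables, but the hands-on version sidesteps the coefficient ring not being a field.

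For (a) $\Leftrightarrow$ (b): given (a), factor each $Q_i$ over $\ol\bK$ and do partial fractions in $t_i$; expanding each summand $(1-\delta t_i)^{-j}=\sum_n\binom{n+j-1}{j-1}\delta^n t_i^n$, multiplying out over $i$, and then applying $P$ (which merely shifts indices and takes linear combinations) exhibits the coefficient of $t^{\mathbf n}$ as $\sum_s p_s(\mathbf n)\gamma_s^{\mathbf n}$ once $\mathbf n$ is large enough to absorb the shifts from $P$, which is (b). Conversely, from (b) one sums the series: $\sum_{\mathbf n}\mathbf n^{\alpha}(\gamma_s t)^{\mathbf n}$ factors as $\prod_i\bigl(\sum_n n^{\alpha_i}(\gamma_{s,i}t_i)^n\bigr)$, each factor rational in $t_i$ with denominator a power of $1-\gamma_{s,i}t_i$; adding back the polynomial correction from the finitely many exceptional $\mathbf n$, $H_{\mathbf a}$ acquires the shape $P/(Q_1(t_1)\cdots Q_r(t_r))$, a priori over $\ol\bK$. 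To descend to $\bK$ I would not argue directly: this already gives (a) over $\ol\bK$, hence (d) over $\ol\bK$, hence (d) over $\bK$ — the dimension of $\mathrm{span}\{S^{\mathbf n}\mathbf a\}$ is unchanged under field extension because $\mathbf a$ and all its shifts lie in $\bK^{\bN^r}$, so a $\bK$-basis of the span remains $\ol\bK$-independent — and one re-enters the loop at (d) $\Rightarrow$ (a).

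Beyond the (d) $\Rightarrow$ (c) step singled out above, the points calling for care are the coordinate-by-coordinate bookkeeping in (a) $\Leftrightarrow$ (c) — iterating the one-variable argument over the $r$ directions while verifying that earlier multiplications do not reintroduce nonvanishing coefficients — and, in (a) $\Leftrightarrow$ (b), keeping track of the harmless polynomial corrections from the finitely many exceptional indices together with the extension-of-scalars point. None of these is deep individually, but together they are what turns the fourfold equivalence into a genuine argument rather than a one-line reduction to the single-variable case.
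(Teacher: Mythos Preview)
Your proof is correct and rests on the same core ideas as the paper's --- commutation of the shifts for (c)$\Leftrightarrow$(d), the reciprocal-polynomial/denominator dictionary for (d)$\Leftrightarrow$(a), and partial fractions for (a)$\Rightarrow$(b) --- but the logical organization differs. The paper runs a cycle (a)$\Rightarrow$(b)$\Rightarrow$(d)$\Rightarrow$(a) together with (c)$\Leftrightarrow$(d), whereas you prove the three pairwise equivalences (c)$\Leftrightarrow$(d), (d)$\Leftrightarrow$(a), (a)$\Leftrightarrow$(b). The most visible difference is in closing the loop back from (b): the paper goes (b)$\Rightarrow$(d) directly, by linearity reducing to a single monomial multi-sequence $n_1^{k_1}\cdots n_r^{k_r}\gamma_1^{n_1}\cdots\gamma_r^{n_r}$ and invoking the one-variable result; you instead sum the series to get (a) over $\ol\bK$ and then descend to $\bK$ by passing through (d) and noting that the span dimension is insensitive to scalar extension. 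Both routes are clean; the paper's avoids the explicit descent bookkeeping, while yours makes the extension-of-scalars point explicit (which the paper's (b)$\Rightarrow$(d) in fact also needs, since the monomial pieces live over $\ol\bK$, but leaves implicit). Your direct (a)$\Rightarrow$(d) is a mild shortcut over the paper's detour through (b).
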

\begin{proof}
  {\bf \Cref{item:5} $\Rightarrow$ \Cref{item:10}.} By \cite[Theorem 1]{ges} we may as well assume that the polynomials $Q_i$ have non-vanishing free terms, and hence we can factor them as
  \begin{equation*}
    Q_i(t_i) = (1-\mu_{1,i}t_i)^{m_{1,i}}\cdots (1-\mu_{k_i,i}t_i)^{m_{k_i,i}}
  \end{equation*}
  for distinct (possibly vanishing) elements $\mu_{\bullet,i}$ in the algebraic closure $\overline{\bK}$. A simple computation now shows that we can take the tuples ${\bf \gamma}_s$ in \Cref{item:10} to be 
  \begin{equation*}
    (\gamma_{s,1},\cdots,\gamma_{s,r}) = (\mu_{\bullet,1},\cdots,\mu_{\bullet,r})
  \end{equation*}
  for various choices of `$\bullet$'. 

  {\bf \Cref{item:10} $\Rightarrow$ \Cref{item:11}.} The multi-sequences satisfying either of the two conditions form a linear space, so it is enough to consider a ``monomial'' multi-sequence, of the form
  \begin{equation*}
    a_{n_1,\cdots,n_r} = n_1^{k_1}\cdots n_r^{k_r} \gamma_1^{n_1}\cdots \gamma_r^{n_r}. 
  \end{equation*}
  Without loss of generality, it is enough to show that $S_1^n {\bf a}$, $n\in \bN$ span a finite-dimensional space. Rescaling $S_1$ by $\gamma_1$, the exponential part $\gamma_1^{n_1}\cdots \gamma_r^{n_r}$ can be dropped entirely, as can all factors independent of $n_1$. In short, we can consider 
  \begin{equation*}
    a_{n_1,\cdots,n_r} = n_1^{k_1}
  \end{equation*}
  instead. We are now back in the plain-sequence case, where we can fall back on \Cref{th:st-rec}. 
  
  {\bf \Cref{item:6} $\Leftrightarrow$ \Cref{item:11}.} The rightward implication is obvious, whereas its converse follows from the fact that the shifts $S_i$ (for $1\le i\le r$) commute: by \Cref{item:11}, for each $i$ there is some $N_i$ such that every $S_i^d {\bf a}$ is a linear combination of the $S_i^n {\bf a}$ for $n<N_i$. But then, by the noted commutation,
  \begin{equation*}
    S_1^{d_1}\cdots S_r^{d_r} {\bf a} = \text{a linear combination of }S_1^{n_1}\cdots S_r^{n_r} {\bf a},\quad n_i<N_i.
  \end{equation*}
  
  {\bf \Cref{item:11} $\Rightarrow$ \Cref{item:5}.} Condition \Cref{item:11} says that for each $1\le i\le r$ there is some polynomial $Q_i(t_i)$ such that the exponents of $t_i$ in $Q_i(t_i)H_{\bf a}(t_1,\cdots,t_r)$ are uniformly bounded. Applying this to all $i$, the product
  \begin{equation*}
    Q_1(t_1)\cdots Q_r(t_r)H_{\bf a}(t_1,\cdots,t_r)
  \end{equation*}
  has only finitely many monomials; in other words, it is a polynomial.
\end{proof}

\begin{definition}\label{def:evrec}
  A multi-sequence over a field $\bK$ is
  \begin{itemize}
  \item {\it C-finite} if it satisfies the equivalent conditions of \Cref{th:multirat}.
  \item {\it rational} if its Hilbert series is rational.
  \item {\it algebraic} if its Hilbert series $H(t)$ is algebraic, in the sense that it satisfies an equation
    \begin{equation*}
      P_d(t) H(t)^d+\cdots+P_1(t)H(t)+P_0(t)=0
    \end{equation*}
    in $\bK[[t]]$, where the $P_i$ are polynomials (not all vanishing);
  \end{itemize}
  see \cite[Definition 6.1.1]{enum2}.
  
  C-finiteness and rationality are equivalent in the 1-dimensional case, where we also refer to such (plain, 1-dimensional) sequences as {\it eventually linear(ly) recursive}.
\end{definition}

\begin{remark}
  Our linear (or linearly) recursive sequences are those studied in \cite[\S 4.1]{enum1}, as well as the {\it recurrence sequences} of \cite[\S 1.1.1]{ev-rec}: it is assumed, in particular, that they satisfy recurrence relations of the form
  \begin{equation}\label{eq:rectemplate}
    a_{n+T}=c_{T-1}a_{n+T-1}+\cdots+c_0 a_n,\quad c_0\text{ not a zero divisor}
  \end{equation}
  (in whatever commutative ring the coefficients $c_i$ belong to) for {\it all} $n$. This convention rules out, for instance, sequences that are eventually zero. This is the reason for requiring the modifier `eventually' in \Cref{def:evrec} and for introducing the pithier term `rational' (justified by \Cref{th:multirat}).
\end{remark}

\begin{example}
  The rationality of \Cref{def:evrec} is weaker than C-finiteness. This is clear for instance from condition \Cref{item:5} of \Cref{th:multirat}, which requires that the denominator be separable as a product of univariate polynomials, but can also be seen by exhibiting a 2-dimensional sequence with rational Hilbert series whose shifts span an infinite-dimensional space.

  Take, say,
  \begin{equation*}
    {\bf a} = (a_{m,n})_{m,n},\quad a_{m,n}=
    \begin{cases}
      1 &\text{if }m=n\\
      0 &\text{otherwise}
    \end{cases}
  \end{equation*}
  The shifts $S_2^d {\bf a}$ are linearly independent, but the Hilbert series is the rational 2-variable function $\frac 1{1-xy}$.
\end{example}

\begin{theorem}\label{th:prodisrat}
  Rationality for multi-sequences in the sense of \Cref{def:evrec} enjoys the following permanence properties.
  \begin{enumerate}[(a)]
  \item\label{item:7} Let ${\bf a}$ and ${\bf b}$ be two $r$-sequences over a field $\bK$, with
    \begin{itemize}
    \item ${\bf a}$ C-finite, and
    \item ${\bf b}$ rational or C-finite. 
    \end{itemize}
    then, their product
    \begin{equation*}
      {\bf a}{\bf b}:= (a_{\bf n} b_{\bf n})_{{\bf n}=(n_1,\cdots,n_r)}
    \end{equation*}
    is rational or C-finite respectively.
  \item\label{item:9} If ${\bf a}$ is a rational $r$-sequence over $\bK$ and for each $(r-1)$-tuple $(n_1,\cdots,n_{r-1})$ only finitely many $a_{n_1,\cdots,n_r}$ are non-zero, the truncation
    \begin{equation*}
      {\bf a'}\in \bK^{\bN^{r-1}},\quad a'_{n_1,\cdots,n_{r-1}} = \sum_{n_r}a_{n_1,\cdots,n_{r}}
    \end{equation*}
    is again rational.
  \item\label{item:matpr} If ${\bf a}$ is a rational 2-sequence such that for each $m$ the number of non-zero $a_{m,n}$ is finite and ${\bf b}$ is a rational sequence, then the ``matrix product'' sequence
    \begin{equation*}
      ({\bf a}\bullet {\bf b})_m := \left(\sum_n a_{m,n}b_n\right)_m
    \end{equation*}
    is rational.
  \item\label{item:12} If ${\bf a}$ is a rational (C-finite) $r$-sequence then so is
    \begin{equation*}
      ({\bf b}_{\bf n})_{{\bf n}=(n_1,\cdots,n_r)} := \left(\sum_{i=0}^{n_1}a_{i,n_2\cdots n_r}\right)_{n_1,\cdots,n_r}.
    \end{equation*}
  \item\label{item:8} If ${\bf a}$ is a rational (C-finite) $r$-sequence then so is
    \begin{equation*}
      ({\bf b}_{\bf n})_{{\bf n}=(n_1,\cdots,n_r)} := (a_{dn_1,n_2\cdots n_r})_{n_1,\cdots,n_r},
    \end{equation*}
    for any positive integer $d$.
  \end{enumerate}
\end{theorem}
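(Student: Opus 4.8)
The plan is to treat each of the five items largely independently, using the characterizations from \Cref{th:st-rec} and \Cref{th:multirat} as the workhorses. For \Cref{item:7}, I would first handle the case where both ${\bf a}$ and ${\bf b}$ are C-finite: by condition \Cref{item:10} of \Cref{th:multirat} each can be written, off a finite set, as a finite sum of terms $p_s({\bf n}){\bf \gamma}_s^{\bf n}$, and the product of two such terms is again of that shape (the polynomial factors multiply, the exponential bases multiply coordinatewise), so the product satisfies \Cref{item:10} and is C-finite. For the mixed case (${\bf a}$ C-finite, ${\bf b}$ merely rational) I would instead use the Hilbert-series picture: writing $H_{\bf a}$ as $P/(Q_1\cdots Q_r)$ via condition \Cref{item:5}, the product multi-sequence corresponds to the \emph{Hadamard product} of power series, and I would show that Hadamard multiplication by a C-finite series preserves rationality. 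Concretely, since Hadamard product is bilinear, it suffices to take $H_{\bf a}$ of the form $\prod_i (1-\mu_i t_i)^{-m_i}$, i.e. a product of one-variable factors; Hadamard-multiplying $H_{\bf b}(t_1,\dots,t_r)$ by $(1-\mu t_1)^{-m}$ amounts to rescaling $t_1\mapsto \mu t_1$ and applying a fixed finite-order differential-type operator in $t_1$ (coming from the binomial coefficients $\binom{n+m-1}{m-1}$, a polynomial in $n$), both of which visibly preserve rationality of $H_{\bf b}$. Iterating over the coordinates gives the claim.

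For \Cref{item:9}, the finiteness hypothesis makes $a'_{n_1,\dots,n_{r-1}} = \sum_{n_r} a_{n_1,\dots,n_r}$ a well-defined finite sum, and on the level of Hilbert series this is exactly the specialization $t_r \mapsto 1$ of $H_{\bf a}(t_1,\dots,t_r)$. I would argue that this specialization is legitimate: grouping the sum by the value of $(n_1,\dots,n_{r-1})$, the hypothesis says each ``column sum'' is finite, so no convergence issue arises formally, and $H_{{\bf a}'}(t_1,\dots,t_{r-1}) = H_{\bf a}(t_1,\dots,t_{r-1},1)$, which is again a ratio of polynomials (one must check the denominator does not vanish identically at $t_r=1$, but the finiteness hypothesis forces the pole structure in $t_r$ to be trivial after the summation, so this is automatic once set up carefully). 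Item \Cref{item:matpr} then follows by combining \Cref{item:7} and \Cref{item:9}: regard ${\bf b}=(b_n)_n$ as a rational $2$-sequence $\tilde b_{m,n}:=b_n$ (constant in $m$, hence rational), form the product $2$-sequence $(a_{m,n}b_n)_{m,n}$ which is rational by \Cref{item:7} since the constant-in-$m$ sequence is C-finite, note the column-finiteness hypothesis is inherited, and apply \Cref{item:9} to sum out $n$.

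Items \Cref{item:12} and \Cref{item:8} are the easiest. For \Cref{item:12}, partial summation in the first coordinate corresponds at the Hilbert-series level to multiplying by $1/(1-t_1)$ (a rational, indeed C-finite, factor in $t_1$ alone), which preserves rationality; and for the C-finite claim one may instead note $1/(1-t_1)$ is of the allowed one-variable-denominator form, so condition \Cref{item:5} is preserved. For \Cref{item:8}, decimation ${\bf n}\mapsto (dn_1,n_2,\dots,n_r)$ is handled via condition \Cref{item:10}: a monomial term $p_s({\bf n}){\bf \gamma}_s^{\bf n}$ becomes $p_s(dn_1,n_2,\dots){\bf \gamma}_s'^{\bf n}$ with $\gamma_{s,1}$ replaced by $\gamma_{s,1}^d$, still a polynomial times an exponential, so C-finiteness is preserved; for the merely-rational case I would pass through \Cref{item:7}-style Hadamard reasoning, or directly observe that $d$-decimation of a rational power series in $t_1$ is rational (a classical fact: it is the average $\frac1d\sum_{\zeta^d=1} H(\zeta t_1^{1/d},\dots)$, which can be made sense of as a rational function). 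The main obstacle I anticipate is \Cref{item:9} — making the $t_r\mapsto 1$ specialization rigorous without appealing to analytic convergence, i.e. verifying purely formally that the column-finiteness hypothesis forces $H_{\bf a}$ to have no pole along $t_r=1$ and that the resulting specialization really computes the truncated multi-sequence; the other items reduce cleanly to one-variable facts and bilinearity once this is in hand.
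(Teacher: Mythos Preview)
Your plan for \Cref{item:7}, \Cref{item:9}, \Cref{item:matpr}, and \Cref{item:12} is essentially the paper's argument. One minor difference: for the C-finite $\times$ C-finite subcase of \Cref{item:7} you multiply out two polynomial-exponential expressions via condition \Cref{item:10} of \Cref{th:multirat}, whereas the paper instead uses the shift criterion \Cref{item:6} (the span of shifts of ${\bf a}{\bf b}$ sits in the image under multiplication of the tensor product of the shift-spans of ${\bf a}$ and ${\bf b}$). Both routes are fine.

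There is, however, a genuine gap in your treatment of \Cref{item:8} in the rational case. Your averaging formula
\[
\frac{1}{d}\sum_{\zeta^d=1} H_{\bf a}(\zeta t_1^{1/d},t_2,\dots,t_r)
\]
presupposes that $\bK$ (or its algebraic closure) contains $d$ distinct $d^{th}$ roots of unity, which fails precisely when $\mathrm{char}\,\bK$ divides $d$. The theorem is stated over an arbitrary field, so this case cannot be ignored. Your fallback to ``\Cref{item:7}-style Hadamard reasoning'' only gets you partway: Hadamard-multiplying by $1/(1-t_1^d)$ does yield a rational series whose monomials have $t_1$-exponent divisible by $d$, but you still need to know that such a rational series can be written as $P(t_1^d,t_2,\dots,t_r)/Q(t_1^d,t_2,\dots,t_r)$ in order to legitimately substitute $t_1$ for $t_1^d$. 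That step is not automatic and is exactly the content of the paper's \Cref{le:divexp}, whose characteristic-$p$ case is handled by a separate argument (via the vanishing of $\partial/\partial t_1$ rather than via roots of unity). You should either restrict to $\gcd(d,\mathrm{char}\,\bK)=1$ or supply the missing positive-characteristic argument.

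Your closing concern about \Cref{item:9} (whether the denominator could vanish identically at $t_r=1$) is a reasonable thing to flag; the paper does not dwell on it either, simply noting that $H_{{\bf a}'}(t_1,\dots,t_{r-1}) = H_{\bf a}(t_1,\dots,t_{r-1},1)$ is obtained by substituting into a ratio of polynomials. If you want to be fully rigorous there, the finiteness hypothesis guarantees that the left-hand side is a well-defined power series, and equality with $A/B$ in $\bK[[t_1,\dots,t_r]]$ then forces any apparent pole of $A/B$ along $t_r=1$ to cancel.
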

\begin{proof}
  \Cref{item:7} Since the proofs of the two claims are substantively different, we treat them separately.

  {\bf (Case 1: {\bf a} C-finite, {\bf b} rational)} The argument resembles that in the proof of \cite[Proposition 6.1.11]{enum2}, except it is simpler because we are handling only rational (rather than algebraic) power series.

  The rationality of ${\bf a}\bullet {\bf b}$ will not be affected by altering finitely many terms of ${\bf a}$, so we may as well assume we have an expression \Cref{eq:gammas} for ${\bf a}$. Moreover, by linearity, we can simplify this to
  \begin{equation*}
    a_{n_1,\cdots,n_r} = n_1^{k_1}\cdots n_r^{k_r}\gamma_1^{n_1}\cdots\gamma_r^{n_r}.
  \end{equation*}
  Since the goal is to show that
  \begin{equation*}
    \sum_{n_i}b_{n_1,\cdots,n_r} a_{n_1,\cdots,n_r} t_1^{n_1}\cdots t_r^{n_r} 
  \end{equation*}
  is rational, the change of variables $t_i\mapsto \gamma_i t_i$ further reduces this to 
  \begin{equation*}
    a_{n_1,\cdots,n_r} = n_1^{k_1}\cdots n_r^{k_r},
  \end{equation*}
  and inducting separately on the $k_i$ finally boils down the goal to proving that if
  \begin{equation*}
    H_{\bf b}(t_i) = \sum_{{\bf n}=(n_1,\cdots,n_r)}b_{\bf n} t_1^{n_1}\cdots t_r^{n_r}
  \end{equation*}
  is rational then so is
  \begin{equation*}
    \widetilde{H}_{\bf b}(t_i) := \sum_{{\bf n}=(n_1,\cdots,n_r)}n_1 b_{\bf n} t_1^{n_1}\cdots t_r^{n_r}.
  \end{equation*}
  This is immediate though, because we have
  \begin{equation*}
    \widetilde{H}_{\bf b}(t_i) = t_i \frac {\partial {H}_{\bf b}(t_i)}{\partial t_i}. 
  \end{equation*}
  
  {\bf (Case 2: {\bf a} and {\bf b} C-finite)} This time around it is the proof of \cite[Theorem 4.2, point 2.]{kp} that we adapt.

  Multiplication
  \begin{equation*}
    ({\bf a},{\bf b})\mapsto {\bf a}\bullet {\bf b}
  \end{equation*}
  is bilinear, inducing a linear map $m:A\otimes A\to A$ for $A:=\bK^{\bN^r}$. That map is compatible with shifting (i.e. the shifts act as algebra endomorphisms), so the shifts $S^{\bf n} ({\bf a}\bullet {\bf b})$ are contained in the image of
  \begin{equation*}
    (\text{span of shifts of }{\bf a})\otimes (\text{span of shifts of }{\bf b}) \le A\otimes A
  \end{equation*}
  through the multiplication map $m$. Since both tensorands are finite-dimensional by assumption, so is
  \begin{equation*}
    \mathrm{span}\{S^{\bf n} ({\bf a}\bullet {\bf b})\ |\ {\bf n}\in \bN^r\}. 
  \end{equation*}

  \Cref{item:9} The Hilbert series
  \begin{equation*}
    H_{\bf a'}(t_1,\cdots,t_{r-1})
  \end{equation*}
  of ${\bf a'}$ is obtained from that of ${\bf a}$ by substituting $1$ for the $r^{th}$ variable $t_r$ (we need the vanishing hypothesis for this to make sense). Since we are assuming rationality, we have 
  \begin{equation*}
    H_{\bf a}(t_1,\cdots,t_r) = \frac{A(t_1,\cdots,t_r)}{B(t_1,\cdots,t_r)},\quad A,B\in \bK[t_1,\cdots,t_r];
  \end{equation*}
  and hence will obtain a rational function upon making the substitution $t_r=1$.

  \Cref{item:matpr} The matrix product sequence ${\bf a}\bullet {\bf b}$ is obtained by
  \begin{itemize}
  \item constructing the 2-sequence ${\bf \overline{b}}$ defined by
    \begin{equation*}
      \overline{b}_{m,n} = b_n,
    \end{equation*}
    which is C-finite (for instance because it satisfies condition \Cref{item:11} of \Cref{th:multirat});
  \item then forming the product ${\bf a}{\bf \overline b}$, which is rational by part \Cref{item:7};
  \item and then summing out the second component of the resulting 2-sequence:
    \begin{equation*}
      ({\bf a}\bullet{\bf b})_m = \sum_n \left({\bf a}{\bf \overline{b}}\right)_{m,n};
    \end{equation*}
    this again produces a rational sequence by part \Cref{item:9}. 
  \end{itemize}

  \Cref{item:12} The monomial $b_{n_1\cdots n_r} t_1^{n_1}\cdots t_r^{n_r}$ of the Hilbert series $H_{\bf b}(t_i)$ is, by definition,
  \begin{equation*}
    a_{0,n_2\cdots n_r} t_1^{n_1}\cdots t_r^{n_r} + \cdots + a_{n_1,n_2\cdots n_r} t_1^{n_1}\cdots t_r^{n_r}.
  \end{equation*}
  These are
  \begin{itemize}
  \item the $(0,n_2,\cdots,n_r)$ term of $H_{\bf a}(t_i)$ multiplied by $t_1^{n_1}$;
  \item the $(1,n_2,\cdots,n_r)$ term of $H_{\bf a}(t_i)$ multiplied by $t_1^{n_1-1}$;
  \item $\cdots$
  \item the $(n_1,n_2,\cdots,n_r)$ term of $H_{\bf a}(t_i)$ (multiplied by $1=t_1^{0}$).
  \end{itemize}
  Summing over all tuples ${\bf n}$, this means that $H_{\bf b}$ is obtained from $H_{\bf a}$ by multiplying each term of the latter by $1+t_1+t_1^2+\cdots$. In short:
  \begin{equation*}
    H_{\bf b}(t_1,\cdots,t_r) = \frac 1{1-t_1} H_{\bf a}(t_1,\cdots,t_r). 
  \end{equation*}
  Both versions (rational and C-finite) of the claim follow from this (using part \Cref{item:5} of \Cref{th:multirat} for the C-finite arm of the argument).

  \Cref{item:8} The Hilbert series $H_{\bf b}$ is obtained from the original one $H_{\bf a}$ by
  \begin{itemize}
  \item dropping all monomials $t_1^{n_1}\cdots t_r^{n_r}$ where $n_1$ is {\it not} divisible by $d$,
  \item and then substituting $t_1$ for $t_1^d$ throughout. 
  \end{itemize}
  The first step (dropping monomials) can be achieved by taking the Hadamard product with the C-finite Hilbert series
  \begin{equation*}
    H(t_1,\cdots,t_r) = \frac 1{1-t_1^d},
  \end{equation*}
  so it preserves both rationality and C-finiteness by part \Cref{item:7} of the present result. As for the second step, write
  \begin{equation*}
    H(t_1,\cdots,t_r) = \frac{P(t_1,\cdots,t_r)}{Q(t_1,\cdots,t_r)}
  \end{equation*}
  for coprime polynomials $P$ and $Q$ (this makes sense because polynomial rings are unique factorization domains \cite[\S 9.3, Theorem 7]{df}), with $Q$ either a plain polynomial or a special one, separable as a product $Q_1(t_1)\cdots Q_r(t_r)$ as in part \Cref{item:5} of \Cref{th:multirat}. By \Cref{le:divexp} $P$ and $Q$ are both polynomials in $t_1^d$ and $t_2,\cdots,t_r$, so replacing $t_1^d$ by $t_1$ throughout again keeps us rational/C-finite.
\end{proof}


\begin{lemma}\label{le:divexp}
  Let $H(t_1,\cdots,t_r)\in \bK[[t_1,\cdots,t_r]]$ be a formal power series which
  \begin{enumerate}[(a)]
  \item\label{item:13} is rational, in the sense that it is expressible as
    \begin{equation}\label{eq:ratpq}
      H(t_1,\cdots,t_r) = \frac{P(t_1,\cdots,t_r)}{Q(t_1,\cdots,t_r)}
    \end{equation}
    for polynomials $P,Q\in \bK[t_1,\cdots,t_r]$, and
  \item\label{item:14} is expressible as a formal power series of $t_1^d$ and $t_2,\cdots,t_r$, i.e. contains only monomials in which the exponent of $t_1$ is divisible by $d$, where $d$ is some fixed positive integer. 
  \end{enumerate}
  Then, we can write \Cref{eq:ratpq} for polynomials $P$ and $Q$ in $t_1^d$ and $t_2,\cdots,t_r$.
\end{lemma}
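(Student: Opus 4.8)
The plan is to exploit the $\bZ/d$-grading of the power series ring by the residue class of the $t_1$-exponent, which lets one ``undo'' the substitution $t_1\rightsquigarrow t_1^{d}$ at the level of rational expressions, and to do so in a way that is completely characteristic-free (the instinctive first move — averaging $H(\zeta^{j}t_1,\dots)$ over $d$-th roots of unity $\zeta$ — works too, but then needs a descent step from $\bK(\zeta)$ back to $\bK$ and degenerates when $\mathrm{char}\,\bK\mid d$, so I would avoid it).

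First I would set up notation: let $\sigma\colon \bK[[w,t_2,\dots,t_r]]\hookrightarrow \bK[[t_1,\dots,t_r]]$ be the injective $\bK$-algebra homomorphism with $w\mapsto t_1^{d}$, and record the $\bK$-module direct sum decomposition
\[
  \bK[[t_1,\dots,t_r]]\;=\;\bigoplus_{i=0}^{d-1} t_1^{\,i}\cdot\sigma\bigl(\bK[[w,t_2,\dots,t_r]]\bigr),
\]
obtained by sorting monomials according to the residue of their $t_1$-exponent modulo $d$. Hypothesis \Cref{item:14} says precisely that $H$ lies in the $i=0$ summand, so $H=\sigma(G)$ for a unique power series $G\in\bK[[w,t_2,\dots,t_r]]$; it then suffices to prove that $G$ is rational in $w,t_2,\dots,t_r$, because applying $\sigma$ to a presentation of $G$ exhibits $H$ as a ratio of polynomials in $t_1^{d},t_2,\dots,t_r$, which is the desired conclusion.

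To prove $G$ rational, I would write $H=P/Q$ as in \Cref{item:13}, so that $QH=P$ holds in $\bK[[t_1,\dots,t_r]]$ (a domain, so there is no ambiguity), and then decompose the \emph{polynomials} $P$ and $Q$ along the grading above, $P=\sum_{i=0}^{d-1}t_1^{\,i}\,\sigma(P_i)$ and $Q=\sum_{i=0}^{d-1}t_1^{\,i}\,\sigma(Q_i)$ with $P_i,Q_i\in\bK[w,t_2,\dots,t_r]$ (genuine polynomials, since only finitely many monomials occur). Because $\sigma$ is a ring homomorphism and $H$ itself lies in the $i=0$ piece, the $i$-th graded component of $QH$ equals $t_1^{\,i}\,\sigma(G\,Q_i)$, while that of $P$ equals $t_1^{\,i}\,\sigma(P_i)$; comparing components across the direct sum and using injectivity of $\sigma$ yields $G\,Q_i=P_i$ in $\bK[[w,t_2,\dots,t_r]]$ for every $i$. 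Since $Q\neq 0$ some $Q_{i_0}\neq 0$, and then $G=P_{i_0}/Q_{i_0}$ is rational; consequently $H=\sigma(G)=P_{i_0}(t_1^{d},t_2,\dots,t_r)/Q_{i_0}(t_1^{d},t_2,\dots,t_r)$, a ratio of polynomials of the required form.

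There is no serious obstacle: the whole argument is bookkeeping with the $\bZ/d$-grading, and the only point deserving a sentence of care is the passage from the single power-series identity $QH=P$ to the coefficientwise identities $G\,Q_i=P_i$ — this is exactly where directness of the decomposition and injectivity of $\sigma$ are used. I would also note in passing that the same proof upgrades ``rational'' to ``C-finite'' in the sense of \Cref{th:multirat}: if $Q$ is taken in the separable product form $Q_1(t_1)\cdots Q_r(t_r)$, then each graded piece $Q_i$ has the form $Q_1^{(i)}(w)\,Q_2(t_2)\cdots Q_r(t_r)$ (the factors $Q_{j\ge 2}$ are untouched and $Q_1$ splits monomial-wise), so $Q_{i_0}$ is again of separable product form; alternatively, $\sigma$ is compatible with all the shift operators, hence reflects finite-dimensionality of the spans of shifts. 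This is the refinement actually invoked in the proof of \Cref{item:8} of \Cref{th:prodisrat}.
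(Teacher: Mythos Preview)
Your proof is correct and takes a genuinely different route from the paper's. The paper assumes $P$ and $Q$ coprime and then argues in two cases: when $d\neq\mathrm{char}\,\bK$ it uses invariance under $t_1\mapsto\zeta t_1$ for a primitive $d^{\text{th}}$ root of unity $\zeta$ (together with coprimality and the observation that $Q$ has nonzero constant term) to force both $P$ and $Q$ individually to be polynomials in $t_1^d$; when $d=\mathrm{char}\,\bK$ (reduced to prime $d$ by induction) it instead uses $\partial H/\partial t_1=0$, specializes $t_2,\dots,t_r$ to a tuple keeping $p,q$ coprime, and finishes by a root-multiplicity argument for one-variable polynomials. Your $\bZ/d$-grading argument sidesteps all of this: you do not need coprimality, you do not need roots of unity, and you do not need a separate characteristic-$p$ case, because the single identity $QH=P$ decomposes componentwise into $GQ_i=P_i$, any one of which exhibits $G$ as rational. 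This is both shorter and more uniform. The trade-off is that the paper's proof, when it applies, yields the slightly sharper conclusion that the \emph{given} coprime $P$ and $Q$ are already polynomials in $t_1^d$, whereas yours produces a possibly different numerator--denominator pair; but the lemma as stated only asks for the existence of such a presentation, so nothing is lost. Your closing remark about the C-finite upgrade is also accurate and worth keeping.
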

\begin{proof}
  We will assume that we have an expression \Cref{eq:ratpq} for {\it coprime} $P$ and $Q$, and seek to show that they are polynomials in $t_1^d$ and $t_2,\cdots,t_r$.

  Since we can induct on the number of prime divisors of $d$, we may as well assume that the latter is prime to begin with. There are now two cases to treat:

  {\bf Case 1: The prime $d$ is not $\mathrm{char}~\bK$.} Let $\zeta\in\overline{\bK}$ be a primitive $d^{th}$ root of unity (one exists, precisely because $d\ne \mathrm{char}~\bK$). Condition \Cref{item:14} says that
  \begin{equation*}
    H(\zeta t_1,t_2,\cdots,t_r) = H(t_1,t_2,\cdots,t_r),
  \end{equation*}
  and hence the same holds for $\frac{P}{Q}$. Now, the coprimality of $P$ and $Q$ and the fact that $Q$ has non-vanishing free term \cite[Theorem 1]{ges} imply that 
  \begin{equation*}
    P(\zeta t_1,t_2\cdots,t_r) = P(t_1,t_2\cdots,t_r)\quad\text{and}\quad Q(\zeta t_1,t_2\cdots,t_r) = Q(t_1,t_2\cdots,t_r). 
  \end{equation*}
  In turn, this is precisely the desired conclusion.   
  
  {\bf Case 2: $d=\mathrm{char}~\bK$.} This time around condition \Cref{item:14} is expressible as
  \begin{equation*}
    \frac{\partial H}{\partial t_1} = 0\Rightarrow \frac{\partial}{\partial t_1}\left(\frac{P}{Q}\right) = 0.
  \end{equation*}
  We can henceforth ignore $H$ and work only with polynomials, which we will evaluate at tuples of elements in the algebraic closure $\overline{\bK}$.

  We can evaluate $P$ and $Q$ at some tuple $(t_2,\cdots,t_r)\in \overline{\bK}^{r-1}$ so as to ensure that
  \begin{equation*}
    p(t):=P(t,t_2,\cdots,t_{r})\quad\text{and}\quad q(t):=Q(t,t_2,\cdots,t_{r})
  \end{equation*}
  are coprime. Our goal is now to show that if $\frac pq$ has vanishing formal derivative (with respect to $t$), then $p$ and $q$ are both polynomials in $t^d$. Write
  \begin{equation*}
    p(t) = (t-\lambda_1)^{m_1}\cdots (t-\lambda_l)^{m_l}
  \end{equation*}
  and 
  \begin{equation*}
    q(t) = (t-\mu_1)^{n_1}\cdots (t-\mu_k)^{n_k}
  \end{equation*}
  for distinct $\lambda_i$ and $\mu_j$ in $\overline{\bK}$. Any $m_i$ and $n_j$ that are divisible by $d$ can be eliminated, since for any polynomial $u$ we have
  \begin{equation*}
    \frac{d}{dt}\left(u^d \frac pq\right) =0\iff \frac{d}{dt}\left(\frac pq\right) =0;
  \end{equation*}
  in other words, we may as well assume that {\it all} $m_i$ and $n_j$ are coprime to $d$. If there is at least one numerator factor $t-\lambda_1$, the derivative $\left(\frac pq\right)'$ will have vanishing order $m_1-1$ at $\lambda_1$, and hence not vanish. One argues similarly for the denominator factors, concluding that the original $p$ and $q$ must have been $d^{th}$ powers (and hence polynomials in $d$, since $d=\mathrm{char}~\bK$) to begin with.
\end{proof}

\begin{remark}
  Parts \Cref{item:9} and \Cref{item:matpr} of \Cref{th:prodisrat} are very much in the spirit of \cite[Theorem 3.8 (vi) and (viii)]{lip} respectively, which are the analogous results for {\it $D$-finite} (rather than rational) power series.

  Parts \Cref{item:12} and \Cref{item:8} (which, although stated only for the index $n_1$ for brevity, have obvious variants valid for the other $n_i$) are multi-variable analogues of \cite[Theorem 4.2, parts 3. and 4.]{kp} respectively.
\end{remark}

The product ${\bf a}\bullet{\bf b}$ in \Cref{th:prodisrat}, \Cref{item:7} is what is usually referred to as the {\it Hadamard product}: see e.g. \cite[\S 2.1]{kp} or \cite[discussion preceding Proposition 6.1.11]{enum2} (we apply the term freely to both multi-sequences and their corresponding power series). Given
\begin{itemize}
\item \Cref{th:prodisrat}, \Cref{item:7}, which ensures the rationality of the Hadamard product if one of the factors is C-finite;
\item which specializes to the well-known fact that for plain, 1-dimensional sequences rationality is closed under Hadamard products \cite[Theorem 4.2 2.]{kp};
\item while at the same time the Hadamard product of {\it algebraic} power series need not be algebraic (\cite[discussion immediately preceding \S 6.5]{kp} and \cite[paragraph preceding Proposition 6.1.11]{enum2}),
\end{itemize}

one might naturally ask whether the Hadamard product of two rational multi-variable power series is again rational. This is not true in general, in more than one variable. To place the example in context, recall (\cite[Definition 6.3.1]{enum2}):

\begin{definition}\label{def:diag}
  The {\it diagonal} of a multi-variable power series
  \begin{equation*}
    H(t_1,\cdots,t_r) = \sum_{n_i} a_{n_1\cdots n_r} t_1^{n_1}\cdots t_r^{n_r}
  \end{equation*}
  is the series
  \begin{equation*}
    \mathrm{diag}~H(t):=\sum_n a_{n\cdots n}t^n. 
  \end{equation*}
  The same terminology applies to (multi-)sequences: the diagonal of the multi-sequence
  \begin{equation*}
    {\bf a} = (a_{\bf n})_{{\bf n}=(n_1,\cdots,n_r)}
  \end{equation*}
  is the sequence
  \begin{equation*}
    \mathrm{diag}~{\bf a}:=(a_{n,\cdots,n})_n.
  \end{equation*}
\end{definition}

Picking out the constituent terms $a_{n\cdots n}$ of the diagonal sequence attached to ${\bf a}$ can be achieved by forming the Hadamard product
\begin{equation*}
  H_{\bf a}(t_1,\cdots,t_r) \bullet \frac 1{1-t_1\cdots t_r}.
\end{equation*}
Since diagonals of rational power series need not be rational (\cite[Example 6.3.2]{enum2}), this allows us to construct rational series with non-rational Hadamard product (see also \cite[p.403]{sw-alg}).

\begin{example}
  Consider the rational multi-sequences ${\bf a}$ and ${\bf b}$ with Hilbert series
  \begin{equation*}
    H_{\bf a}(s,t)=\frac 1{1-s-t}\quad\text{and}\quad H_{\bf b}(s,t)=\frac 1{1-st}.
  \end{equation*}
  As follows from the aforementioned \cite[Example 6.3.2]{enum2}, we have
  \begin{equation*}
    H_{{\bf a}\bullet {\bf b}}(s,t) = \sum_n \tbinom{2n}{n} s^n t^n = \frac 1{\sqrt{1-4st}},
  \end{equation*}
  which is not rational.
\end{example}

\subsection{Recursive polynomial sequences}\label{subse:recpolys}

We will need a ``composition'' operation between sequences of polynomials and plain complex number sequences, as detailed below.

\begin{definition}\label{def:op}
  For a polynomial
  \begin{equation}\label{eq:poly}
    P(x)=\sum_k c_k x^k
  \end{equation}
  and a sequence ${\bf a}=(a_n)_n$ we write
  \begin{equation*}
    P\triangleright{\bf a} = \sum_k c_k a_k.
  \end{equation*}
  Similarly, for a sequence  $\cP=(P_n)_n$ of polynomials and a complex number sequence ${\bf a}=(a_n)_n$ we write $\cP\triangleright {\bf a}$ for the sequence $(P_n\triangleright{\bf a})_n$.
\end{definition}
In other words, one simply substitutes $a_k$ for $x^k$ in the polynomials $P_n$ and evaluates to obtain the $n^{th}$ term $b_n$.

\begin{definition}\label{def:rec-comp}
  With $\cP=(P_n)_n$ and $(a_n)_n$ as in \Cref{def:op} we say that the two sequences $\cP$ and ${\bf a}$ are {\it recursively compatible} if the sequence $\cP\triangleright{\bf a}$ is eventually linear recursive.

  The sequence $\cP$ of polynomials is {\it recursively well-adjusted} (or just `well-adjusted' for short) if it is recursively compatible with every eventually linear recursive sequence ${\bf a}=(a_n)_n$. 
\end{definition}

Note that the operation
\begin{equation*}
  (\cP,{\bf a})\mapsto \cP\triangleright {\bf a}
\end{equation*}
is additive in both variables, and hence so is the recursive compatibility relation.




\begin{proposition}\label{pr:qpn}
  Every eventually linear recursive polynomial sequence $\cP=(P_n)_n$ is recursively well-adjusted in the sense of \Cref{def:rec-comp}.
\end{proposition}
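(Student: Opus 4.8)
The plan is to exploit a simple duality between multiplication by $x$ on polynomials and the shift on sequences, and then to reduce the claim to the elementary fact that a vector-valued sequence obeying a constant-coefficient linear recursion has each coordinate eventually linearly recursive.

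First I would unpack the hypothesis: since $\cP=(P_n)_n$ is eventually linearly recursive there are an integer $d\ge 1$, a cutoff $N_0$, and coefficients $q_1,\dots,q_d\in\bK[x]$ with
\[
  P_n \;=\; q_1(x)P_{n-1}+\cdots+q_d(x)P_{n-d}\qquad(n\ge N_0)
\]
(in the situations of interest $\cP$ arises as the entry sequence of $T^n$ for a matrix $T$ over $\bK[x]$, and Cayley--Hamilton supplies such $q_l$). Next I would record the one identity that drives everything: for $q,f\in\bK[x]$ and an arbitrary sequence ${\bf a}$,
\[
  \bigl(q(x)f\bigr)\triangleright{\bf a}\;=\;f\triangleright\bigl(q(S){\bf a}\bigr),
\]
where $q(S)$ is the operator obtained by substituting the shift $S$ for $x$; writing $q=\sum_j q_jx^j$ and $f=\sum_k f_kx^k$, both sides expand at once to $\sum_{j,k}q_jf_k\,a_{j+k}$. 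In words, the pairing $(f,{\bf a})\mapsto f\triangleright{\bf a}$ of \Cref{def:op} intertwines multiplication by $x$ with the shift.

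Now fix an arbitrary eventually linearly recursive ${\bf a}=(a_n)_n$. By \Cref{item:4} of \Cref{th:st-rec} the subspace $W:=\mathrm{span}_{\bK}\{S^j{\bf a}:j\ge 0\}\subseteq\bK^{\bN}$ is finite-dimensional; it contains ${\bf a}=S^0{\bf a}$, and $S(W)\subseteq W$, so $q(S)$ maps $W$ into $W$ for every $q\in\bK[x]$. Choose a basis $w_1,\dots,w_r$ of $W$ and form the $\bK^r$-valued sequence
\[
  B_n:=\bigl(P_n\triangleright w_1,\ \dots,\ P_n\triangleright w_r\bigr)
\]
(each coordinate a finite sum, since $P_n$ is a polynomial). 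Feeding the recursion for $\cP$ into the displayed identity with $f=P_{n-l}$ and $q=q_l$, and expanding each $q_l(S)w_i\in W$ in the chosen basis, I obtain for $n\ge N_0$ a relation $B_n=\sum_{l=1}^d B_{n-l}\Lambda_l$ with fixed matrices $\Lambda_l\in M_r(\bK)$ --- a linear recursion for $(B_n)_n$ with \emph{constant} scalar-matrix coefficients. Passing to the companion system $\widetilde B_n:=(B_n,B_{n-1},\dots,B_{n-d+1})$, which satisfies $\widetilde B_n=\widetilde B_{n-1}C$ for a fixed square matrix $C$ and all large $n$, the shifts of $(\widetilde B_{N_0+k})_{k}$ span the finite-dimensional space $\mathrm{span}\{\widetilde B_{N_0}C^m:m\ge 0\}$ by Cayley--Hamilton, so by \Cref{item:4} of \Cref{th:st-rec} every coordinate of $(\widetilde B_n)_n$ --- in particular each sequence $(P_n\triangleright w_i)_n$ --- is eventually linearly recursive. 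Finally, writing ${\bf a}=\sum_i\mu_iw_i$ with $\mu_i\in\bK$, the sequence $\cP\triangleright{\bf a}=\bigl(\sum_i\mu_i\,(P_n\triangleright w_i)\bigr)_n$ is a finite $\bK$-linear combination of eventually linearly recursive sequences, hence eventually linearly recursive; as ${\bf a}$ was arbitrary, $\cP$ is recursively well-adjusted.

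The substance of the argument is concentrated in a single step: it is the finite-dimensionality and $S$-invariance of the span $W$ of the shifts of ${\bf a}$, together with the displayed $x\leftrightarrow S$ identity, that let me trade the polynomial coefficients $q_l(x)$ acting on the $P_n$ for fixed \emph{scalar} matrices $\Lambda_l$ acting on the finitely many bookkeeping sequences $P_n\triangleright w_i$. Everything else --- the displayed identity, the companion-matrix reduction, and closure of the eventually-linearly-recursive sequences under finite linear combinations --- is routine; and should one only care about polynomial sequences of bounded degree, the whole argument collapses to the observation that $\cP\triangleright{\bf a}$ is then a fixed $\bK$-linear combination of the coefficient sequences of the $P_n$.
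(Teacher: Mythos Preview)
Your proof is correct and takes a genuinely different route from the paper. The paper offers two arguments: the first decomposes ${\bf a}$ via \Cref{item:3} of \Cref{th:st-rec} into terms of the form $n^d\gamma^n$ and handles these by repeatedly differentiating the polynomial recursion for $\cP$; the second (alternative) recasts $\cP\triangleright{\bf a}$ as the ``matrix product'' of the rational $2$-sequence of coefficients of $\cP$ with ${\bf a}$ and invokes \Cref{th:prodisrat}\,\Cref{item:matpr}. You instead use \Cref{item:4} of \Cref{th:st-rec} together with the duality $(qf)\triangleright{\bf a}=f\triangleright q(S){\bf a}$, which lets you trade the $\bK[x]$-coefficient recursion on $\cP$ for a constant-\emph{matrix} recursion on the vector $(P_n\triangleright w_i)_i$ indexed by a basis of the finite-dimensional shift span of ${\bf a}$. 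Your argument is more structural and entirely self-contained --- it needs neither the polynomial--exponential normal form nor any of the multi-sequence machinery --- whereas the paper's alternative proof has the advantage of fitting into the broader framework of \Cref{th:prodisrat} that is reused later (e.g., in the Laurent-polynomial analogue \Cref{th:pab}).
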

\begin{proof}
  Given
  \begin{itemize}
  \item the additivity of recursive compatibility noted just before the statement,
  \item the characterization of eventually recursive sequences in \labelcref{item:3} of \Cref{th:st-rec} above,
  \item and the fact that we can ignore finitely many initial sequence terms $a_i$, $0\le i\le k$ by \Cref{le:ign} below,
  \end{itemize}
  it is enough to prove that $\cP$ is recursively compatible with the sequence ${\bf a}=(a_n)_n$ given by
\begin{equation*}
  a_n = n^d \gamma^n
\end{equation*}
for some non-negative integer $d$ and some $\gamma\in \bC$.

First, note that when $d=0$ and hence $a_n=\gamma^n$ we have
\begin{equation*}
  \cP\triangleright{\bf a} = (P_n(\gamma))_n
\end{equation*}
and hence the conclusion is immediate. In general, consider a linear recurrence of $\cP$ (for large $n$):
\begin{equation}\label{eq:recpol}
  P_{n+T}=c_{T-1}P_{n+T-1}+\cdots+c_0 P_n
\end{equation}
for polynomials $c_i$. Then, the derived polynomials satisfy the relation
\begin{equation}\label{eq:6}
  P'_{n+T}=c_{T-1}P'_{n+T-1}+\cdots+c_0 P'_n + c'_{T-1}P_{n+T-1}+\cdots+c'_0 P_n.
\end{equation}
If $d=1$ then the conclusion amounts to proving that $(P'_n(\gamma))_n$ is eventually recursive; this, in turn, follows from \Cref{eq:6} and the fact that $(P_n(\gamma))_n$ is (eventually) recursive. 

For $d=2$ repeat the procedure: \Cref{eq:6} once more shows that $(P'_n(\gamma))_n$ is recurrent (as, of course, is $(P_n(\gamma))_n$). Differentiating once more we obtain
\begin{equation*}
  P''_{n+T} = \sum_{i=0}^{T-1} (c_i P''_{n+i} + 2 c'_i P'_{n+i} + c''_i P_{n+i}).
\end{equation*}
This, in turn, shows that $(P''_n(\gamma))_n$ is recurrent and hence so is $\cP\triangleright (n^2 \gamma^n)_n$. 

It should be clear now how to continue this recursive process to conclude for arbitrary $d$.
\end{proof}

\begin{proof}[alternative]
  The fact that $(P_n(x))_n$ is eventually linearly recursive implies that its Hilbert series
  \begin{equation*}
    H_{\cP}(x,y) = \sum_{m,n}c_{m,n}x^m y^n:=\sum_{n\ge 0} P_n(x)y^n,
  \end{equation*}
  which a priori is an element of $\bK[x][[y]]$ (formal $y$-power series over the polynomial ring in $x$), is rational:
  \begin{equation*}
    H_{\cP}(x,y) = \frac{A(x,y)}{B(x,y)},\quad A,B\in \bK[x,y]. 
  \end{equation*}
  The Hilbert series $H_{\cP\triangleright {\bf a}}(y)$ of $\cP\triangleright {\bf a}$ is obtained from $H_{\cP}(x,y)$ by substituting $a_m$ for each $x^m$; in other words, the coefficient of $y^n$ in $H_{\cP\triangleright{\bf a}}$ is
  \begin{equation*}
    \sum_m c_{m,n} a_m.
  \end{equation*}
  The fact that the resulting power series is rational (and hence $\cP\triangleright{\bf a}$ is eventually linearly recursive by \Cref{th:st-rec}) follows from part \Cref{item:matpr} of \Cref{th:prodisrat}.
\end{proof}

\begin{lemma}\label{le:ign}
  Under the hypotheses of \Cref{pr:qpn}, $\cP\triangleright{\bf a}$ is eventually linearly recursive if ${\bf a}$ eventually vanishes.
\end{lemma}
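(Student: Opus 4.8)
The statement to prove is Lemma~\ref{le:ign}: if $\cP=(P_n)_n$ is an eventually linearly recursive sequence of polynomials and ${\bf a}=(a_n)_n$ eventually vanishes, then $\cP\triangleright{\bf a}$ is eventually linearly recursive. The point of this lemma is purely bookkeeping — it is what licenses the reduction in the proof of \Cref{pr:qpn} to "standard" sequences $a_n=n^d\gamma^n$ after discarding finitely many initial terms. Write ${\bf a}={\bf a}_{\mathrm{fin}}+{\bf a}_{\mathrm{tail}}$, where ${\bf a}_{\mathrm{fin}}$ is supported on $\{0,1,\dots,k\}$ (for $k$ large enough that $a_n=0$ for $n>k$) and ${\bf a}_{\mathrm{tail}}=0$; so in fact ${\bf a}={\bf a}_{\mathrm{fin}}$ and ${\bf a}$ is a finitely supported sequence. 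By additivity of the operation $(\cP,{\bf a})\mapsto\cP\triangleright{\bf a}$ noted before \Cref{pr:qpn}, it suffices to treat ${\bf a}=e_j$, the sequence with a $1$ in position $j$ and $0$ elsewhere, for a single $j\in\bN$.

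For ${\bf a}=e_j$ we have $(\cP\triangleright e_j)_n = P_n(\text{coefficient extraction})$ — more precisely, $P_n\triangleright e_j$ is exactly the coefficient of $x^j$ in $P_n(x)$. So the claim reduces to: the sequence $(\,[x^j]P_n\,)_n$ of $j$-th coefficients of an eventually linearly recursive polynomial sequence is an eventually linearly recursive scalar sequence. This is immediate: if $P_{n+T}=c_{T-1}(x)P_{n+T-1}+\cdots+c_0(x)P_n$ holds for all large $n$, extract the coefficient of $x^j$ from both sides — the right side becomes a $\bK$-linear combination of the coefficients $[x^i]P_{n+m}$ for $0\le i\le j$, $0\le m\le T-1$. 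Thus the full coefficient vectors $([x^0]P_n,\dots,[x^J]P_n)$, where $J$ bounds $\deg P_n$ for all $n$ (such a $J$ exists because eventual linear recursion with polynomial coefficients forces the degrees to be eventually bounded — or, failing a uniform bound being obvious, one restricts to the subspace of shifts, which is finite-dimensional by \labelcref{item:4} of \Cref{th:st-rec} applied in $\bK[x]^{\bN}$, and each shift has degree at most the max over a finite spanning set), satisfy a vector linear recursion; projecting to the $j$-th coordinate gives the desired scalar recursion.

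Alternatively — and this is cleaner and avoids fussing over a uniform degree bound — invoke condition \labelcref{item:4} of \Cref{th:st-rec}: since $\cP$ is eventually linearly recursive as a $\bK[x]$-valued sequence, the $\bK[x]$-span (equivalently, after clearing denominators, the $\bK$-span of shifts of a suitable tail) of $\{S^d\cP : d\in\bN\}$ is finite-dimensional; the map $P\mapsto P\triangleright e_j=[x^j]P$ is $\bK$-linear and commutes with the shift $S$, so it carries a finite-dimensional shift-stable space to a finite-dimensional shift-stable subspace of $\bK^{\bN}$, and the image of $\cP$ is $\cP\triangleright e_j$. Hence $\cP\triangleright e_j$ has finite-dimensional shift-span, so is eventually linearly recursive by \Cref{th:st-rec} again. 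Summing over the finitely many $j$ in the support of ${\bf a}$ finishes the proof.

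The main (and only) obstacle is cosmetic: making sure the reduction to $e_j$ is clean and that one does not accidentally need a uniform bound on $\deg P_n$ where none is guaranteed a priori. The shift-space formulation in the previous paragraph sidesteps this entirely, since finite-dimensionality of the shift-span is preserved by any shift-equivariant linear map without reference to degrees, so I would present that as the main argument and relegate the coefficient-extraction recursion to a one-line remark.
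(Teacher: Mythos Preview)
Your core argument---reduce by linearity to ${\bf a}=e_j$, note that $P_n\triangleright e_j=[x^j]P_n$, and extract the coefficient of $x^j$ from the recursion \Cref{eq:recpol}---is exactly the paper's proof. Your observation that the right side involves only $[x^i]P_{n+m}$ for $0\le i\le j$ is correct and already sufficient: the vector $([x^0]P_n,\dots,[x^j]P_n)$ with \emph{fixed} $j$ satisfies a constant-coefficient linear recursion, so each component is eventually linearly recursive.

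Two cautions. First, the detour through a global degree bound $J$ is both unnecessary and unsound: the claim that eventual linear recursion with polynomial coefficients forces the degrees to be eventually bounded is false (take $P_n=x^n$ with $P_{n+1}=xP_n$), and no such $J$ need exist. Fortunately you do not need it---the fixed-$j$ truncation above is enough. Second, your ``cleaner'' shift-space argument has a gap: condition \Cref{item:4} of \Cref{th:st-rec} applied over the field $\bK(x)$ gives a finite-dimensional $\bK(x)$-span of shifts, but the coefficient-extraction map $P\mapsto[x^j]P$ is only $\bK$-linear, not $\bK(x)$-linear, so finite-dimensionality of the image does not follow for free. Making this precise forces you back to the coefficient-extraction computation, so it is not actually cleaner.
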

\begin{proof}
  This is almost immediate: if $a_{i}=0$ for $i>k$ then the $n^{th}$ term of $\cP\triangleright{\bf a}$ is a linear combination (with constant coefficients) of the first $k+1$ coefficients of $P_n$, and a recurrence relation \Cref{eq:recpol} induces one for each coefficient $a_i$, $0\le i\le k$.
\end{proof}

\subsection{Laurent polynomials}\label{subse:spec}

It will be useful later on, in \Cref{se.omega}, to have a Laurent-polynomial analogue of sorts for \Cref{pr:qpn}. To elaborate, we will have
\begin{itemize}
\item a linearly recursive sequence $\cP=(P_n)_n$ of {\it Laurent} polynomials;
\item and C-finite sequences $({\bf a}_n)$ and $({\bf b}_n)$;
\item and the goal of showing that $\cP\triangleright({\bf a},{\bf b})$ is well-behaved (C-finite, algebraic, D-finite, etc.),
\end{itemize}
the latter symbol is the object of the following expansion of \Cref{def:op}. 

\begin{definition}\label{def:laurtr}
  For a Laurent polynomial \Cref{eq:poly} and sequences ${\bf a}=(a_n)_{n\in \bN}$ and ${\bf b}=(b_n)_{n\in \bN}$ we write
  \begin{equation*}
    P\triangleright({\bf a},{\bf b}) = \sum_{k\ge 0} c_k a_k + \sum_{k<0} c_k b_{-k-1}. 
  \end{equation*}
  In other words, $\cP\triangleright({\bf a},{\bf b})$ is defined similarly to $\cP\triangleright{\bf a}$, except this time around
\begin{itemize}
\item $a_k$ is substituted for each $x^k$ appearing in $P$,
\item while $b_k$ is substituted for each $x^{-k-1}$ appearing in $P$. 
\end{itemize}
For a sequence $\cP=(P_n)_n$ of Laurent polynomials we write
\begin{equation*}
  \cP\triangleright({\bf a},{\bf b}) := (P_n\triangleright({\bf a},{\bf b}))_n. 
\end{equation*}
\end{definition}

We can now state

\begin{theorem}\label{th:pab}
  Let $\cP=(P_n)_n$ be an eventually linearly recursive sequence of Laurent polynomials and ${\bf a}$, ${\bf b}$ two eventually linearly recursive sequences. Then, $\cP\triangleright({\bf a},{\bf b})$ is algebraic.
\end{theorem}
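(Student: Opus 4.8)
The plan is to split a Laurent polynomial into its non-negative and negative parts and treat the two halves separately, reducing to cases already handled. Write $P_n(x) = P_n^+(x) + P_n^-(x)$, where $P_n^+$ collects the terms of $P_n$ with non-negative exponents and $P_n^-$ collects those with negative exponents. Since $\cP$ is eventually linearly recursive and the splitting is coordinate-wise on coefficients (and a recurrence for $\cP$ descends to each coefficient, as in \Cref{le:ign}), both $\cP^+ = (P_n^+)_n$ and $\cP^- = (P_n^-)_n$ are eventually linearly recursive. By additivity of the operation $\triangleright$, it suffices to show that both $\cP^+\triangleright({\bf a},{\bf b}) = \cP^+\triangleright{\bf a}$ and $\cP^-\triangleright({\bf a},{\bf b})$ are algebraic and invoke that algebraic power series are closed under addition (\cite[\S 6.1]{enum2}).

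For the non-negative part, $\cP^+$ is an eventually linearly recursive sequence of honest polynomials, so \Cref{pr:qpn} shows $\cP^+\triangleright{\bf a}$ is eventually linearly recursive, hence rational, hence algebraic. For the negative part, the idea is to convert it into the non-negative case by a change of variable. Writing $P_n^-(x) = \sum_{k\ge 1} c_{n,k}^- x^{-k}$, set $Q_n(y) := \sum_{k\ge 1} c_{n,k}^- y^{k-1}$, an ordinary polynomial; then $Q_n$ is built from the coefficients of $P_n$ by a shift, so $\cQ = (Q_n)_n$ is again eventually linearly recursive, and $P_n^-\triangleright({\bf a},{\bf b}) = Q_n\triangleright{\bf b}$ by \Cref{def:laurtr}. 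Applying \Cref{pr:qpn} once more, $\cQ\triangleright{\bf b}$ is eventually linearly recursive, hence algebraic. Adding the two contributions gives the claim.

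Alternatively — and this is probably the cleaner route, mirroring the ``alternative'' proof of \Cref{pr:qpn} — one can argue at the level of Hilbert series. The bivariate series $H_{\cP}(x,y) = \sum_n P_n(x) y^n$ lies in $\bK[x^{\pm 1}][[y]]$ and is rational in the appropriate sense because $\cP$ is linearly recursive; splitting off non-negative and negative $x$-exponents and applying the matrix-product permanence property \Cref{item:matpr} of \Cref{th:prodisrat} (once with the sequence ${\bf a}$ indexed by $x$-degree, once with ${\bf b}$ indexed by the negated-and-shifted $x$-degree) yields rational — hence algebraic — Hilbert series for each half. The main subtlety to get right is purely bookkeeping: one must check that cutting a linearly recursive Laurent-polynomial sequence at exponent $0$, and then re-indexing the negative tail as an ordinary polynomial sequence via $x^{-k}\mapsto y^{k-1}$, preserves eventual linear recursiveness — this is the analogue of the coefficient-wise recurrence argument in \Cref{le:ign}, applied separately in each exponent-coordinate. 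No genuinely new obstacle arises beyond \Cref{pr:qpn} and \Cref{th:prodisrat}; indeed the conclusion is only ``algebraic'' rather than ``rational'' because in the intended application (\Cref{se.omega}) the sequences ${\bf a},{\bf b}$ themselves will be replaced by C-finite \emph{multi}-sequences, where Hadamard-type products can leave the rational class, so I would state and prove the lemma in the form that survives that generalization.
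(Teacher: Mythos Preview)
Your central claim---that splitting $P_n = P_n^+ + P_n^-$ yields eventually linearly recursive sequences $(P_n^+)_n$ and $(P_n^-)_n$---is false. The recurrence \Cref{eq:recpol} has \emph{Laurent polynomial} coefficients $c_i(x)$, not scalars, so it does \emph{not} descend to individual $x$-coefficients or to the truncation at exponent $0$: multiplication by $c_i(x)$ moves mass across the cut. Concretely, take $P_n(x) = (x+x^{-1})^n$ (this is exactly \Cref{ex:catagain}). With $u=xy$, $v=y/x$ one computes
\[
\sum_{n\ge 0} P_n^+(x)\,y^n \;=\; \sum_{a\ge b\ge 0}\binom{a+b}{a}u^av^b \;=\; \tfrac12\left(\frac{1}{1-u-v}+\frac{1}{\sqrt{1-4uv}}\right) \;=\; \tfrac12\left(\frac{1}{1-y(x+x^{-1})}+\frac{1}{\sqrt{1-4y^2}}\right),
\]
which is algebraic but not rational. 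So $(P_n^+)_n$ is \emph{not} eventually linearly recursive over $\bK(x)$, and neither \Cref{pr:qpn} nor part \Cref{item:matpr} of \Cref{th:prodisrat} applies to it. Your ``alternative'' route founders on the same rock: the bivariate coefficient array you would feed into \Cref{item:matpr} is precisely the non-rational series above.

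There is also a sanity check you missed: your argument, if it worked, would prove that $\cP\triangleright({\bf a},{\bf b})$ is \emph{rational}, not merely algebraic. But \Cref{ex:catagain} exhibits a $\cP$ and ${\bf a}$ for which the output is $\bigl(\binom{n}{n/2}\bigr)_n$ (for even $n$), whose Hilbert series $1/\sqrt{1-4t^2}$ is irrational. The paper's proof handles this by a genuinely different maneuver: instead of cutting at exponent $0$, it \emph{shears}---multiplying $P_n$ by $x^{An+B}$ so that the result is an honest polynomial satisfying a polynomial-coefficient recurrence. The price is that the substitution point now moves with $n$, and extracting the answer requires taking a \emph{diagonal} of a rational $2$-sequence; it is \cite[Theorem 6.3.3]{enum2} on diagonals that produces the ``algebraic'' conclusion and explains why nothing stronger is available.
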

\begin{proof}
Since
\begin{equation*}
  \cP\triangleright({\bf a},{\bf b}) = \cP\triangleright({\bf a},{\bf 0}) + \cP\triangleright({\bf 0},{\bf b}),
\end{equation*}
it is enough to work with a single sequence ${\bf a}$ and hence try to argue that
\begin{equation*}
  \cP\triangleright{\bf a} := \cP\triangleright({\bf a},{\bf 0})
\end{equation*}
is algebraic. This means that we are substituting $a_n$ for the non-negative-exponent $x^n$ appearing in the $P_m$, and dropping the negative-exponent $x^{-n-1}$, $n\in \bN$.

In this setting, we can replace Laurent with ordinary polynomials at the cost of replacing `$\triangleright$' with a more sophisticated operation. To see this, first consider a recursion \Cref{eq:recpol}
\begin{equation*}
  P_{n+T}(x)=\sum_{i=1}^T c_{T-i}(x)P_{n+T-i}(x),
\end{equation*}
holding for all $n$ sufficiently large (say for $n\geq \bar{N}$), where $c_{\bullet}(x)$ are Laurent polynomials and $T\geq 1$ is a positive integer. Choosing natural numbers $A$ and $B$ such that $x^{Ai}c_{T-i}(x)$ and $x^{Aj+B}P_j(x)$ are polynomials for $1\leq i\leq T$ and $0\leq j<\bar{N}+T$, we have
\begin{align*}
x^{A(n+T)+B}P_{n+T}(x)&=\sum_{i=1}^T x^{A(n+T)+B} c_{T-i}(x) P_{n+T-i}(x)\\
&=\sum_{i=1}^T x^{Ai}c_{T-i}(x)(x^{A(n+T-i)+B} P_{n+T-i}(x))\\
\end{align*}
for $n\geq \bar{N}$. Replacing the original $P$ with the {\it polynomials} $x^{An+B}P_{n}(x)$ satisfying a linear recurrence with respective polynomial coefficients $x^{Ai}c_{T-i}(x)$ in place of $c_{T-i}$, we may as well assume that everything in sight is a plain (as opposed to Laurent) polynomial.

The substitution of terms $a_n$ for powers of $x$ in $P_m$, though, now takes on a different character. We will have some polynomial $q(n)=An+B$ such that
\begin{itemize}
\item we substitute $a_0$ for each $x^{q(n)}$ in each $P_n(x)$ (note the correlation: as $n$ grows, we start substituting $a$s for $x$s in $P_n$ starting with larger and larger exponents $q(n)$);  
\item similarly, we substitute $a_1$ for each $x^{q(n)+1}$;
\item etc.
\end{itemize}

The recursion \Cref{eq:recpol} shows that
\begin{equation*}
  \deg P_{n+T} \le \max_{0\le i\le T-1} \deg c_i P_{n+i},
\end{equation*}
which puts a bound of $Dn$ (for fixed $D$) on the degree of $P_n$. We may thus assume that the substitution of $a$s for $x$s takes place over a range of exponents for monomials of $P_n$: starting with $x^{q(n)}=x^{An+B}$ and ending with $x^{Dn}$.

We can now proceed along the lines of the alternative proof of \Cref{pr:qpn}:
\begin{itemize}
\item consider the rational Hilbert series
\begin{equation*}
  H_{\cP}(x,y) = \sum_{m,n}c_{m,n}x^m y^n:=\sum_{n\ge 0} P_n(x)y^n
\end{equation*}
with its attached rational 2-sequence $(c_{m,n})$;
\item note that it will make no difference to change finitely many members of ${\bf a}$, because the difference to the original sequence would then be eventually vanishing, and the problem would reduce to arguing that the sequences
  \begin{equation*}
    (b_{q(n),n})_n,\ (b_{q(n)+1,n})_n,\cdots,(b_{q(n)+\ell,n})_n
  \end{equation*}
  are algebraic, for fixed $\ell$. In turn, this follows from the fact that the diagonal of a rational 2-sequence is algebraic \cite[Theorem 6.3.3]{enum2}.
\item but then we may as well assume that ${\bf a}$ is of the form
  \begin{equation*}
    a_n = \sum_{i=1}^s Q_i(n)\gamma_i^n
  \end{equation*}
  for {\it all} $n$, and hence is extendable to negative $n$ by the same formula, and then also extendable to the C-finite 2-sequence
  \begin{equation*}
    (a_{m-q(n)})_{m,n} = \sum_{i=1}^s Q_i(m-q(n))\gamma_i^{m-q(n)};
  \end{equation*}
  the C-finiteness follows because the 2-sequence has the shape described in part \Cref{item:10} of \Cref{th:multirat}.
\item now form the (also rational, by \Cref{th:prodisrat} \Cref{item:7}) 2-sequence $a'_{m,n}:=c_{m,n}a_{m-q(n)}$;
\item which then yields a 2-sequence
  \begin{equation*}
    b_{m,n} := a'_{q(m),n} + a'_{q(m)+1,n}+\cdots + a'_{Dm,n},
  \end{equation*}
  rational by parts \Cref{item:12} and \Cref{item:8} of \Cref{th:prodisrat};
\item which in turn has an algebraic {\it diagonal} sequence
  \begin{equation*}
    b_n:=b_{n,n}=a'_{q(n),n} + a'_{q(n)+1,n}+\cdots + a'_{Dn,n}
  \end{equation*}
  by \cite[Theorem 6.3.3]{enum2}. 
\end{itemize}
$b_n$ is our target sequence, and the conclusion that it is algebraic is precisely what we were after.
\end{proof}

An example will illustrate the substitution $x^n\to a_n$ in the discussion above.

\begin{example}\label{ex:catagain}

Take $P_n=(\frac{1}{x}+x)^n$ and $a_n=\delta_{0,n}$ (i.e. ${\bf a}=(1,0,0,\cdots)$). Furthermore, take $q(n)=n$. Then for all $n$, 
\begin{equation*}x^nP_n=(1+x^2)^n.
\end{equation*}  We consider $\mathcal{P}\triangleright({\bf a},{\bf 0})$. The substitution in question will pick out the coefficient of $x^n$ in $x^nP_n$, i.e. will return the sequence
  \begin{equation*}
    b_n=
    \begin{cases}
      \tbinom{n}{\frac n2}&\text{ if $n$ is even}\\
      0&\text{ otherwise}
    \end{cases}
  \end{equation*}
  This is an algebraic sequence: by \cite[Example 6.3.2]{enum2}, its Hilbert series is $\frac 1{\sqrt{1-4x^2}}$.\\~\\
\end{example}
\section{Results Concerning $(P_n\triangleright {\bf a})_n$}\label{se:conv}
\Cref{def:op} can be extended to define $P\triangleright {\bf a}$ where $P$ is a polynomial of $\ell$ variables, and ${\bf a}$ is an $\ell$-sequence.
Let
\begin{equation*}P(x_1, ..., x_\ell)=\sum_{s_1, ..., s_\ell}c_{ s_1, ..., s_\ell}x_1^{s_1}\cdots x_\ell^{s_\ell},
\end{equation*}
and let ${\bf a}=(a_{i_1, ..., i_\ell})_{i_1, ..., i_\ell}$ be an $\ell$-sequence. Then 
\begin{equation*}P\triangleright{\bf a}=\sum_{s_1, ..., s_\ell}c_{ s_1, ..., s_\ell}a_{{s_1},...,{s_\ell}}.
\end{equation*}
Further, if $\mathcal{P}=(P_n)_n$ then $\mathcal{P}\triangleright{\bf a}=(P_n\triangleright {\bf a})_n$.
\\~\\
In this section, we consider $(P_n\triangleright {\bf a})_n$ where, unless otherwise stated, $(P_n)_n$ is an eventually linear recursive sequence of complex polynomials (of several variables) and ${\bf a}$ is a multi-sequence of complex numbers. In particular, we consider the sequence $(P_n\triangleright {\bf a})_n$ where ${\bf a}$ has a given property (rational, algebraic, P-recursive).  

\subsection{P-recursive multi-sequences}
Recall the definition of P-recursive (multi-) sequences as given in \cite[Definition 3.2]{lip} and as restated below: 
\begin{definition}
A sequence $a(i_1, i_2,...,i_\ell)$ is P-recursive if there is a natural number $m$ such that 
\begin{enumerate}
\item For each $j=1, 2,...,\ell$ and each ${\bf v}=(v_1, v_2,...,v_\ell)\in \{0, 1, ..., m\}^\ell$ there is a polynomial $p_{\bf v}^{(j)}$ (with at least one $p_{\bf v}^{(j)}\not=0$ for each $j$) such that 
\begin{equation*}\sum_{\bf v} p_{\bf v}^{(j)}(i_j)\; a(i_1-v_1, i_2-v_2,...,i_\ell-v_\ell) =0
\end{equation*}
for all $i_1, i_2,...,i_\ell\geq m$,  and 
\item if $\ell>1$ then all the $m$-sections of $a(i_1, i_2,...,i_\ell)$ are P-recursive. 
\end{enumerate}
\end{definition}
We find that if $(P_n)_n$ is an eventually linear recursive sequence of polynomials and ${\bf a}$ is a $P$-recursive multi-sequence, then $(P_n\triangleright {\bf a})_n$ is a P-recursive sequence, as stated in \Cref{precursivechar0}. To prove this result, we will require the following theorem from \cite[Theorem 3.8, (i) and (vi)]{lip}. 
\begin{theorem}\label{thm3.8}
\begin{enumerate}[(a)]
\item The P-recursive sequences (of dimension $\ell$) form an algebra over $\mathbb{C}[i_1, i_2,...,i_\ell]$.
\item  If $(a_{i_1,...,i_\ell})_{i_1,...,i_\ell}$ is P-recursive and $\sum_{i_\ell} a_{i_1, ..., i_\ell}$ converges for every $i_1, ..., i_{\ell-1}$ then the sequence $(b_{i_1,...,i_{\ell-1}})_{i_1,...,i_{\ell-1}}$ given by
\begin{equation*}b_{i_1, ..., i_{\ell-1}}=\sum_{i_\ell}a_{i_1, ..., i_{\ell-1}, i_\ell}
\end{equation*} is P-recursive. 
\end{enumerate}
\end{theorem}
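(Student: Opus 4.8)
The plan is to route both parts through a single linear-algebraic reformulation of P-recursiveness, in the same spirit as the span criteria of \Cref{th:multirat} and the permanence arguments used to prove \Cref{th:prodisrat}. Fix $\bK=\bC(i_1,\ldots,i_\ell)$. The first task is to check that a multi-sequence $\mathbf a$ is P-recursive in the sense of \cite[Definition 3.2]{lip} exactly when the following holds: inside the module generated by $\mathbf a$ over the Ore algebra $\bK\langle S_1,\ldots,S_\ell\rangle$ of shift operators (localized so that division by nonzero polynomials in $\bC[i_1,\ldots,i_\ell]$ is permitted), the $\bK$-span of the shifts $S^{\mathbf v}\mathbf a$, $\mathbf v\in\bN^\ell$, is finite-dimensional. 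Reconciling this one finite-dimensionality statement with the ``box recurrence in each direction plus P-recursive $m$-sections'' definition is the one genuinely technical point: it amounts to clearing denominators, and to noting that the ``sections'' clause is precisely what upgrades a bound obtained one direction at a time into a simultaneous bound, exactly as in the implication \Cref{item:11}$\Rightarrow$\Cref{item:6} of \Cref{th:multirat}.

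Granting that criterion, part (a) of \Cref{thm3.8} is purely formal. Closure under multiplication by $i_j$, hence by any element of $\bC[i_1,\ldots,i_\ell]$, is immediate, since multiplication by $i_j$ is already built into the localized module. For a sum $\mathbf a+\mathbf b$ of P-recursive multi-sequences, the span of its shifts lies inside $(\text{span of shifts of }\mathbf a)+(\text{span of shifts of }\mathbf b)$, which is finite-dimensional. For a product $\mathbf a\mathbf b$ one argues exactly as in Case~2 of the proof of part \Cref{item:7} of \Cref{th:prodisrat}: componentwise multiplication is bilinear and commutes with all the shifts, so the shifts of $\mathbf a\mathbf b$ lie in the image, under the multiplication map, of $(\text{span of shifts of }\mathbf a)\otimes_{\bK}(\text{span of shifts of }\mathbf b)$, again finite-dimensional. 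Thus the P-recursive $\ell$-sequences form a $\bC[i_1,\ldots,i_\ell]$-algebra.

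For part (b) the summation is over the last index, so the ``P-recursive sections'' clause of the definition together with induction on $\ell$ reduces us to the bivariate case: given P-recursive $\mathbf a=(a_{m,n})$ with $\sum_n a_{m,n}$ convergent for each $m$, show $b_m:=\sum_n a_{m,n}$ is a (one-dimensional) P-recursive sequence. Here I would use creative telescoping. Finite-dimensionality of the $\bC(m,n)$-span of $\{S_m^iS_n^j\mathbf a\}$ forces a $\bC(m,n)$-linear relation among $\mathbf a,\,S_m\mathbf a,\,S_m^2\mathbf a,\dots$; reorganizing it inside the Ore algebra and splitting off the part right-divisible by $S_n-1$ produces an operator identity
\begin{equation*}
  L(m,S_m)\,\mathbf a=(S_n-1)\bigl(C(m,n,S_m,S_n)\,\mathbf a\bigr),\qquad L\neq 0,
\end{equation*}
with $L$ involving neither $n$ nor $S_n$. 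Summing over $n$ collapses the right-hand side to a limit of boundary terms $C(\cdots)\mathbf a$ as $n\to\infty$, and these vanish because convergence of $\sum_n a_{m,n}$ forces $a_{m,n}\to 0$ (and likewise for the finitely many shifted copies of $\mathbf a$ occurring in $C$). Hence $L(m,S_m)\,b=0$; clearing the denominators in $L$ and folding the finitely many small-$m$ discrepancies into the initial terms gives an honest polynomial-coefficient recurrence for $(b_m)$.

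The main obstacle is the creative-telescoping step in part (b): manufacturing the $n$-free, $S_n$-free operator $L$ together with its certificate $C$ out of the bare finite-dimensionality statement, then verifying that the boundary contributions really do die off under the convergence hypothesis and that the surviving recurrence can be rescaled to have polynomial, not merely rational, coefficients valid for all large $m$. Everything else is either the bookkeeping behind the span-criterion equivalence of the first paragraph or the formal linear algebra of part (a), which copies the C-finite arguments already carried out in \Cref{th:prodisrat}.
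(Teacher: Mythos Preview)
The paper does not prove \Cref{thm3.8} at all: it is quoted verbatim from \cite[Theorem 3.8, (i) and (vi)]{lip} and used as a black box in the proof of \Cref{precursivechar0}. So there is no ``paper's own proof'' to compare against.

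That said, your sketch is in the spirit of Lipshitz's argument. The reformulation of P-recursiveness as finite $\bK$-dimensionality of the shift-span is essentially the holonomic viewpoint, and part (a) does go through formally once that is in place, just as in the C-finite case of \Cref{th:prodisrat}. For part (b), creative telescoping is indeed the mechanism, but the step you flag as the main obstacle is in fact the entire content: from finite-dimensionality over $\bC(m,n)$ you immediately get a relation among $\mathbf a, S_m\mathbf a, S_m^2\mathbf a,\ldots$ with coefficients in $\bC(m,n)$, but extracting from it a nonzero operator $L(m,S_m)$ free of both $n$ and $S_n$, together with a certificate $C$, is not just ``reorganizing and splitting off the part right-divisible by $S_n-1$''. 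That existence statement is exactly what requires the holonomicity/dimension argument (a Bernstein-type bound) that Lipshitz supplies; without it, there is no reason a priori that the $n$-dependence can be eliminated. Your write-up correctly identifies this as the hard step but does not actually carry it out, so as a self-contained proof it has a genuine gap at precisely that point.
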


\begin{proposition} \label{precursivechar0} Let $(P_n)_n$ be an eventually linear recursive sequence of complex polynomials and let ${\bf a}:\mathbb{N}^\ell\rightarrow \mathbb{C}$ be a P-recursive multi-sequence. Then $\left(P_n\triangleright {\bf a}\right)_n$
is a P-recursive sequence. 
\end{proposition}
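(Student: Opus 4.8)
The structure will mirror the alternative proof of \Cref{pr:qpn} and the proof of \Cref{th:pab}, replacing rationality statements by their P-recursive analogues drawn from \Cref{thm3.8}. As before, recursive compatibility is additive in the polynomial-sequence variable, and we may discard finitely many terms of $\cP$, so it suffices to treat a single ``building block'' for $\cP$. More precisely, the fact that $(P_n)_n$ is eventually linearly recursive means its Hilbert series
\begin{equation*}
  H_{\cP}(x_1,\cdots,x_\ell,y) = \sum_{n\ge 0} P_n(x_1,\cdots,x_\ell)\, y^n = \sum_{{\bf s},n} c_{{\bf s},n}\, x_1^{s_1}\cdots x_\ell^{s_\ell}\, y^n
\end{equation*}
is rational in $y$ over $\bC[x_1,\cdots,x_\ell]$; in particular the $(\ell+1)$-sequence $(c_{{\bf s},n})_{{\bf s},n}$ is P-recursive (a rational sequence is P-recursive, and this is part of \cite[Theorem 3.8]{lip}, or follows directly since $c_{{\bf s},n}$ is, for fixed ${\bf s}$, an eventually-linearly-recursive function of $n$ supported on a polynomially-bounded range of ${\bf s}$). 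The target sequence is
\begin{equation*}
  (P_n\triangleright {\bf a})_n = \Bigl(\sum_{{\bf s}} c_{{\bf s},n}\, a_{\bf s}\Bigr)_n,
\end{equation*}
so the task is to show that forming this weighted sum over ${\bf s}$, with weights the P-recursive multi-sequence ${\bf a}$, lands back inside the P-recursive class.

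First I would form the $(2\ell+1)$-sequence $\bigl(c_{{\bf s},n}\, a_{\bf t}\bigr)_{{\bf s},{\bf t},n}$, which is P-recursive because P-recursive multi-sequences form an algebra over the relevant polynomial ring (\Cref{thm3.8}(a)) and the product of two P-recursive sequences in disjoint sets of variables is P-recursive. Next I would specialize ${\bf t}={\bf s}$: taking diagonals of a P-recursive multi-sequence is again P-recursive (this is among the operations in \cite[Theorem 3.8]{lip}; alternatively one reduces diagonalization to a combination of substitutions and sections, which preserve P-recursiveness), producing the P-recursive $(\ell+1)$-sequence $\bigl(c_{{\bf s},n}\, a_{\bf s}\bigr)_{{\bf s},n}$. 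Finally I would sum out the $\ell$ variables $s_1,\cdots,s_\ell$ one at a time using \Cref{thm3.8}(b); the convergence hypothesis there is satisfied because, for each fixed $n$, the polynomial $P_n$ has only finitely many monomials, so $c_{{\bf s},n}$ is supported on finitely many ${\bf s}$ and every inner sum is finite. The result of these $\ell$ summations is exactly $(P_n\triangleright{\bf a})_n$, now exhibited as P-recursive.

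The point requiring care—and the main obstacle—is the interplay between the ``eventually'' qualifier on the recursion for $\cP$ and the fact that the $m$-sections appearing in the definition of P-recursiveness (condition (2), invoked recursively) must all be P-recursive, with no ``eventually'' escape hatch. Concretely: the recurrence $P_{n+T}=\sum_i c_{T-i}(x_1,\cdots,x_\ell)P_{n+T-i}$ holds only for $n\ge\bar N$, so the naive Hilbert-series argument gives rationality of $y\mapsto H_{\cP}$ only after adjusting finitely many coefficients, and I must check that the finitely-many exceptional values of $n$ do not spoil P-recursiveness of $(c_{{\bf s},n})$ as a genuine (not merely eventual) multi-sequence. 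This is handled exactly as \Cref{le:ign}: altering finitely many terms of $\cP$ changes $(P_n\triangleright{\bf a})_n$ by a sequence supported on finitely many indices, and a sequence supported on finitely many indices is trivially P-recursive, so by additivity of $\triangleright$ it suffices to prove the statement assuming the recurrence \eqref{eq:recpol} for $\cP$ holds for \emph{all} $n$; then $(c_{{\bf s},n})$ is P-recursive on the nose. A secondary bookkeeping point is the degree bound: the recurrence forces $\deg P_n\le Dn$ for a fixed $D$, so $c_{{\bf s},n}=0$ once any $s_j>Dn$, which is what guarantees the finiteness of all the sums $\sum_{s_j}$ and lets \Cref{thm3.8}(b) apply at every stage. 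With these two observations in place the argument is a clean concatenation of the algebra, diagonal, and summation closure properties of P-recursive multi-sequences.
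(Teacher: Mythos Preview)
Your proposal is correct and follows essentially the same route as the paper: show that the coefficient multi-sequence $(c_{{\bf s},n})$ is rational and hence P-recursive, combine it with ${\bf a}$ using the algebra closure of P-recursive sequences, and then sum out the ${\bf s}$-variables via \Cref{thm3.8}(b). The one difference is in the combination step: rather than passing to a $(2\ell+1)$-sequence in disjoint variables and then diagonalizing, the paper simply extends ${\bf a}$ to an $(\ell+1)$-sequence $\overline{a}_{n,i_1,\ldots,i_\ell}:=a_{i_1,\ldots,i_\ell}$ that is constant in $n$, and multiplies directly in $(\ell+1)$ variables---this avoids the appeal to a diagonalization closure property and keeps the whole argument inside the two parts of \Cref{thm3.8} already quoted. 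Your extended discussion of the ``eventually'' issue is also unnecessary: eventual linear recursion already makes $H_{\cP}$ rational on the nose, so $(c_{{\bf s},n})$ is rational and hence P-recursive without any adjustment of initial terms.
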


\begin{proof}Let $\mathcal{P}=(P_n(x_1, ..., x_\ell))_n$ be an eventually linear recursive sequence of polynomials. For each $n$, we write
\begin{equation*} P_n(x_1, ..., x_\ell)=\sum_{s_1, ..., s_\ell} c_{n, s_1, ..., s_\ell} x_1^{s_1} \cdots x_\ell^{s_\ell}.
\end{equation*}
Since $(P_n(x_1, ..., x_\ell))_n$ is an eventually linear recursive sequence, by Theorem \ref{th:st-rec}(a), 
\begin{equation*}H_{\mathcal{P}}(t)=\sum_nP_n(x_1, ..., x_\ell)t^n=\sum_{n,s_1,..., s_\ell} c_{n, s_1, ..., s_\ell} x_1^{s_1} \cdots x_\ell^{s_\ell} t^n 
\end{equation*}
is rational. It follows that the attached multi-sequence $(c_{n, s_1,  ..., s_\ell})_{n, s_1, ..., s_\ell}$ is rational and therefore a P-recursive multi-sequence (by \cite[Proposition 2.3, (ii)]{lip}). 
\\~\\
Let ${\bf a}=(a_{i_1,...,i_\ell})_{i_1, ..., i_\ell}$ be a P-recursive multi-sequence. For each $n$, define 
\begin{equation*}\bar{a}_{n, i_1,  ..., i_\ell}=a_{i_1,...,i_\ell}.
\end{equation*}
Since $(a_{i_1,...,i_\ell})_{i_1, ..., i_\ell}$ is P-recursive, $(\bar{a}_{n,i_1, ..., i_\ell})_{n,i_1, ..., i_\ell}$ is also P-recursive. 
Then for each $n$,
\begin{equation*}P_n\triangleright {\bf a}=\sum_{i_1,  ..., i_\ell} c_{n,i_1, ..., i_\ell} a_{i_1,  ..., i_\ell}=\sum_{i_1,  ..., i_\ell} c_{n,i_1,  ..., i_\ell} \bar{a}_{n, i_1, ..., i_\ell}.
\end{equation*}
By \Cref{thm3.8} part (a), $\left(c_{n,i_1, ..., i_\ell} \bar{a}_{n, i_1,  ..., i_\ell}\right)_{n, i_1, ..., i_\ell}$ is a P-recursive multi-sequence. Since $(c_{n, s_1, ..., s_\ell})_{s_1, ..., s_\ell}$ are the coefficients of the terms of $P_n(x_1,  ..., x_\ell)$, for a fixed $n$ there are finitely many $s_1, ..., s_\ell$ such that $c_{n,s_1, ..., s_\ell}\not=0$. Thus, by applying \Cref{thm3.8} part (b) a number of times, we find that
\begin{equation*}\left(\sum_{i_1, ..., i_\ell}c_{n,i_1, ..., i_\ell}\bar{a}_{n,i_1, ..., i_\ell}\right)_{n}=\left(\sum_{i_1, ..., i_\ell}c_{n, i_1,  ..., i_\ell}a_{i_1,...,i_\ell}\right)_{n}
\end{equation*}
is a P-recursive sequence. 
That is, $\left(P_n\triangleright {\bf a}\right)_n$ is a P-recursive sequence. 
\end{proof}

\subsection{Examples}
When ${\bf a}$ is a rational or algebraic multi-sequence, $(P_n\triangleright{\bf a})_n$ is P-recursive by \Cref{precursivechar0}. We wonder if this result could be improved: If ${\bf a}$ is a rational (or algebraic) multi-sequence, is $(P_n\triangleright {\bf a})_n$ necessarily rational (or algebraic)? In the following examples we see that this is not always the case over $\mathbb{C}$. 

\begin{example}
 If  ${\bf a}:\mathbb{N}^2\rightarrow \mathbb{C}$ is a rational multi-sequence, and $(P_n(x_1,x_2))_n$ is a linear recursive sequence of complex polynomials then $(P_n\triangleright {\bf a})_n$ may not be a rational sequence.  
\\~\\
Let ${\bf a}=(a_{i_1, i_2})_{i_1, i_2}$ be given by: 
\begin{equation*}a_{i_1, i_2}=\begin{cases} {1} &\text{ for } i_1=i_2\\ 0& \text{ else}\end{cases}
\end{equation*}
Then, 
\begin{equation*}H_{{\bf a}}(x_1, x_2):=\sum_{i_1, i_2} a_{i_1, i_2}x_1^{i_1}x_2^{i_2}=\sum_i x_1^i x_2^i=\dfrac{1}{{1-x_1x_2}}
\end{equation*}
and $(a_{i_1, i_2})_{i_1, i_2}$ is a rational multi-sequence.
\\~\\
Let $P_n(x_1, x_2)=(x_1+x_2)^{2n}$. That is, $P_n(x_1, x_2)=(x_1+x_2)^2P_{n-1}(x_1, x_2)$ and $(P_n(x_1, x_2))_n$ is a linear recursive sequence. Notice that, 

\begin{equation*}H_{\mathcal{P}\triangleright{\bf a}}(t):=\sum_n \left(P_n\triangleright {\bf a} \right)t^n=\sum_n {2n\choose n} t^n=\dfrac{1}{\sqrt{1-4t}}
\end{equation*}
as in \cite[Example 6.3.2]{enum2}. Therefore, in this case, $(P_n\triangleright {\bf a})_n$ is not a rational sequence. 
\end{example}

\begin{example}\label{example2}
 Let ${\bf a}: \mathbb{N}^\ell\rightarrow \mathbb{C}$ be a rational multi-sequence, and $\mathcal{P}=(P_n(x_1, ..., x_\ell))_n$ be a linear recursive sequence of complex polynomials. If $\ell>2$, then $(P_n\triangleright {\bf a})_n$ may not be an algebraic sequence. 
\\~\\
Let $\ell>2$ be a natural number. For each $n$, let $P_n(x_1, ..., x_\ell)=(x_1+\cdots+x_\ell)^{\ell n}$. Then \begin{equation*}P_n(x_1, ..., x_\ell)=(x_1+\cdots+x_\ell)^\ell P_{n-1}(x_1,  ..., x_\ell)
\end{equation*} and $(P_n(x_1, ..., x_\ell))_n$ is a linear recursive sequence. 
\\~\\
Let ${\bf a}=(a_{i_1, ..., i_\ell})_{i_1, ..., i_\ell}$ be defined by: 
\begin{equation*}a_{i_1, ..., i_\ell}=\begin{cases} 1 & \text{ for } i_1=\cdots= i_\ell\\ 0 &\text{ else} \end{cases}.
\end{equation*}
Then 
\begin{equation*}H_{\bf a} (x_1,  ..., x_\ell):=\sum_{i_1, ..., i_\ell}a_{i_1,..., i_\ell}x_1^{i_1}\cdots x_\ell^{i_\ell}=\sum_{i}x_1^{i}\cdots x_\ell^{i}=\dfrac{1}{1-x_1\cdots x_\ell}
\end{equation*}
and $(a_{i_1,  ..., i_\ell})_{i_1, i_2, ..., i_\ell}$ is a rational multi-sequence. Notice that 
\begin{equation*}H_{\mathcal{P}\triangleright{\bf a}}(t):=\sum_n \left(P_n \triangleright {\bf a}\right) t^n=\sum_n {\ell n\choose n,n,...,n}t^{n}.
\end{equation*}
However, this series is transcendental over any field characteristic zero (see, for example \cite[Theorem 3.8]{WoodcockSharifTrans}). Therefore, in this case, $(P_n\triangleright {\bf a})_n$ is not an algebraic sequence. 
\end{example}

\begin{example}\label{example3} If ${\bf a}:\mathbb{N}^2\rightarrow \mathbb{C}$ is an algebraic multi-sequence and $(P_n(x_1,x_2))_n$ is a linear recursive sequence of complex polynomials, $(P_n\triangleright {\bf a})_n$ may not be algebraic. 
\\~\\
Let ${\bf a}=(a_{i_1, i_2})_{i_1, i_2}$ be given by: 
\begin{equation*}a_{i_1, i_2}=\begin{cases} {2i_1\choose i_1} &\text{ for } i_1=i_2\\ 0& \text{ else}\end{cases}
\end{equation*}
Then, 
\begin{equation*}H_{{\bf a}}(x_1, x_2):=\sum_{i_1, i_2} a_{i_1, i_2}x_1^{i_1}x_2^{i_2}=\sum_i {2i\choose i}x_1^i x_2^i=\dfrac{1}{\sqrt{1-4x_1x_2}}
\end{equation*}
as in \cite[Example 6.3.2]{enum2}. That is, $(a_{i_1, i_2})_{i_1, i_2}$ is an algebraic multi-sequence (but not rational).
\\~\\
Let $P_n(x_1, x_2)=(x_1+x_2)^{2n}$. Then $P_n(x_1, x_2)=(x_1+x_2)^2P_{n-1}(x_1, x_2)$ and $(P_n(x_1, x_2))_n$ is a linear recursive sequence. Notice that, 

\begin{equation*}H_{\mathcal{P}\triangleright{\bf a}}(t):=\sum_n \left(P_n(x_1, x_2)\triangleright {\bf a}\right) t^n=\sum_n {2n\choose n}^2 t^n
\end{equation*}
which  is transcendental over $\mathbb{C}$ (see for example \cite[\S 4, Example (g)]{RPStanleyDiff}). Therefore, in this case, $(P_n\triangleright{\bf a})_n$ is not an algebraic sequence. 
\end{example}
Over fields of characteristic $p$, examples analogous to \Cref{example2} and \Cref{example3} cannot be found, as stated in Proposition \ref{algebraiccharp}. 
\begin{proposition}\label{algebraiccharp}
Let $\mathbb{K}$ be a field of characteristic $p$. Let ${\bf a}: \mathbb{N}^\ell\rightarrow \mathbb{K}$ be an algebraic multi-sequence and let $(P_n)_n$ be an eventually recursive sequence of polynomials in $\mathbb{K}[x_1, ...,x_\ell]$. Then $(P_n\triangleright {\bf a})_n$ is an algebraic sequence.  
\end{proposition}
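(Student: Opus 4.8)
The plan is to follow the template of the alternative proof of \Cref{pr:qpn} and of the proof of \Cref{th:pab}, but to replace the permanence properties of rational and C-finite series used there by their positive-characteristic counterparts for \emph{algebraic} series. First I would record, exactly as at the start of the proof of \Cref{precursivechar0}, that since $\cP=(P_n)_n$ is eventually linearly recursive (with polynomial coefficients in $x_1,\cdots,x_\ell$) its generating function
\begin{equation*}
  H_{\cP}(x_1,\cdots,x_\ell,y)=\sum_{n\ge 0}P_n(x_1,\cdots,x_\ell)\,y^n=\sum_{n,s_1,\cdots,s_\ell}c_{n,s_1,\cdots,s_\ell}\,x_1^{s_1}\cdots x_\ell^{s_\ell}\,y^n
\end{equation*}
is rational, so the attached multi-sequence $(c_{n,{\bf s}})_{n,{\bf s}}$ is rational, hence algebraic. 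Since ${\bf a}$ is algebraic, $H_{\bf a}(x_1,\cdots,x_\ell)=\sum_{\bf s}a_{\bf s}x_1^{s_1}\cdots x_\ell^{s_\ell}$ is algebraic over $\bK(x_1,\cdots,x_\ell)$, and hence so is the inflation $\frac1{1-y}H_{\bf a}$, which is the Hilbert series of the $(\ell+1)$-sequence $\overline{\bf a}=(\overline a_{n,{\bf s}})$ with $\overline a_{n,{\bf s}}:=a_{\bf s}$. What remains is to prove that $\sum_n\left(\sum_{\bf s}c_{n,{\bf s}}a_{\bf s}\right)y^n$ is algebraic.

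Next I would form the Hadamard (termwise) product $(c_{n,{\bf s}}\overline a_{n,{\bf s}})_{n,{\bf s}}=(c_{n,{\bf s}}a_{\bf s})_{n,{\bf s}}$ of the rational multi-sequence $(c_{n,{\bf s}})$ with the algebraic multi-sequence $\overline{\bf a}$. The crucial input — the sole place where $\mathrm{char}\,\bK=p$ is used, and the reason the obstructions of \Cref{example2} and \Cref{example3} evaporate — is the theorem that over a field of characteristic $p$ the Hadamard product of algebraic power series is again algebraic (equivalently: iterated diagonals of algebraic power series are algebraic), which is the positive-characteristic strengthening of the rational diagonal statement \cite[Theorem 6.3.3]{enum2} going back to Furstenberg and Deligne and which fails over $\bC$. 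Granting this, $(c_{n,{\bf s}}a_{\bf s})_{n,{\bf s}}$ is algebraic. It then remains to sum out $s_1,\cdots,s_\ell$; since each $P_n$ is a genuine polynomial, for every fixed $n$ only finitely many $c_{n,{\bf s}}$ are nonzero, so $\sum_{\bf s}c_{n,{\bf s}}a_{\bf s}$ is a finite sum and the substitution $x_i\mapsto 1$ in $\sum_{n,{\bf s}}(c_{n,{\bf s}}a_{\bf s})x_1^{s_1}\cdots x_\ell^{s_\ell}y^n$ is a legitimate operation on power series, returning precisely $\sum_n(P_n\triangleright{\bf a})y^n$. To see this substitution preserves algebraicity I would specialize the $x_i$ to $1$ one variable at a time: at each stage choose an algebraic equation $\sum_i Q_iH^i=0$ with $\gcd_i Q_i=1$ (clearing common factors), observe that the irreducible polynomial $x_\ell-1$ then cannot divide all the $Q_i$ so not every $Q_i|_{x_\ell=1}$ vanishes, and observe that evaluation at $x_\ell=1$ is a ring homomorphism on the subring of series with finite support in $s_\ell$ (a property inherited at each stage from the finiteness just noted), hence carries the equation to an algebraic equation for the specialized series. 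After $\ell$ such steps one concludes that $(P_n\triangleright{\bf a})_n$ is algebraic.

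The main obstacle is concentrated in the single deep ingredient highlighted above: closure of algebraic power series under Hadamard products (equivalently, iterated diagonals) over a field of characteristic $p$. Everything else is bookkeeping in the style already used in \Cref{se:rec} and \Cref{se:conv} — the passage from linear recursion of $(P_n)_n$ to rationality of $H_{\cP}$, the inflation and finite-support arguments, and the reuse of the permanence results of \Cref{th:prodisrat} — together with one mild technical care point worth flagging, namely that the ``set all $x_i$ equal to $1$'' step must be carried out one variable at a time, since the coefficients of a minimal algebraic equation can share the zero locus $\{x_1=\cdots=x_\ell=1\}$ without sharing a common polynomial factor.
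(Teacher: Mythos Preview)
Your proposal is correct and follows essentially the same route as the paper: the paper's proof is the terse remark that one argues as in \Cref{precursivechar0}, replacing \Cref{thm3.8} by the Sharif--Woodcock theorem (\Cref{swMainThm}) for the Hadamard-product step and by \Cref{algebraicindices} for the ``sum out a variable by setting $x_\ell=1$'' step, and your write-up unwinds exactly those ingredients. Your extra care in doing the specialization $x_i\mapsto 1$ one variable at a time is precisely what the iterated use of \Cref{algebraicindices} amounts to.
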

The proof of \Cref{algebraiccharp} is similar to the proof of \Cref{precursivechar0}, however instead of applying \Cref{thm3.8}, we require the main theorem in \cite{sw-alg} (as restated in \Cref{swMainThm}), as well as \Cref{algebraicindices}. 
\begin{theorem} \label{swMainThm}
If $\mathbb{K}$ is a field of characteristic $p>0$ and if $f, g$ are algebraic series over $\mathbb{K}$, then the Hadamard product of $f$ and $g$ is again an algebraic series over $\mathbb{K}$. 
\end{theorem}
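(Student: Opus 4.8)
The plan is to deduce the statement from the classical Cartier-type characterization of algebraicity in positive characteristic (Christol over finite fields, Sharif--Woodcock and later authors in general), for which the compatibility of Hadamard products with the "section" (decimation) operators does all the work. Since algebraicity of $f\in\bK[[t_1,\dots,t_r]]$ over $\bK(t_1,\dots,t_r)$ is unchanged under the purely inseparable base change $\bK\subseteq\bK^{\mathrm{perf}}$ (an extension purely inseparable over one that is algebraic is still algebraic, and $\bK^{\mathrm{perf}}(t_1,\dots,t_r)/\bK(t_1,\dots,t_r)$ is such), I would first assume $\bK$ perfect. For $\mathbf i\in\{0,\dots,p-1\}^r$ set
\[
  \Lambda_{\mathbf i}\Bigl(\sum_{\mathbf n}a_{\mathbf n}\,\mathbf t^{\mathbf n}\Bigr)=\sum_{\mathbf n}a_{p\mathbf n+\mathbf i}^{1/p}\,\mathbf t^{\mathbf n},
\]
which makes sense precisely because $\bK$ is perfect; these operators are additive, Frobenius$^{-1}$-semilinear, and satisfy $f=\sum_{\mathbf i}\mathbf t^{\mathbf i}\,\bigl(\Lambda_{\mathbf i}f\bigr)^{p}$. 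Call the set of all iterates $\Lambda_{\mathbf w}f$ (over finite words $\mathbf w$ in the $\Lambda_{\mathbf i}$) the \emph{$p$-kernel} of $f$, and let $V_f$ be its $\bK$-linear span inside $\bK[[t_1,\dots,t_r]]$ (a genuine $\bK$-subspace, stable under each $\Lambda_{\mathbf i}$ by semilinearity over a perfect field). The characterization I would invoke is: \emph{$f$ is algebraic over $\bK(t_1,\dots,t_r)$ if and only if $\dim_{\bK}V_f<\infty$.}

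Granting this, the argument is immediate. Writing $f=\sum a_{\mathbf n}\mathbf t^{\mathbf n}$, $g=\sum b_{\mathbf n}\mathbf t^{\mathbf n}$, and using that $x\mapsto x^{1/p}$ is a ring homomorphism of the perfect field $\bK$, the section operators are multiplicative for the Hadamard product:
\[
  \Lambda_{\mathbf i}(f\odot g)=\sum_{\mathbf n}\bigl(a_{p\mathbf n+\mathbf i}\,b_{p\mathbf n+\mathbf i}\bigr)^{1/p}\mathbf t^{\mathbf n}=\Lambda_{\mathbf i}f\odot\Lambda_{\mathbf i}g,
\]
and iterating along any word $\mathbf w$ gives $\Lambda_{\mathbf w}(f\odot g)=\Lambda_{\mathbf w}f\odot\Lambda_{\mathbf w}g$. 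Hence the $p$-kernel of $f\odot g$ lies in $\{u\odot v:u\in V_f,\ v\in V_g\}$, so $V_{f\odot g}$ is contained in the image of the $\bK$-bilinear Hadamard pairing $V_f\times V_g\to\bK[[t_1,\dots,t_r]]$, a quotient of $V_f\otimes_{\bK}V_g$; therefore $\dim_{\bK}V_{f\odot g}\le(\dim_{\bK}V_f)(\dim_{\bK}V_g)<\infty$ once $f$ and $g$ are algebraic. By the characterization, $f\odot g$ is algebraic.

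The main obstacle is thus not this argument but the characterization itself, and within it the direction ``algebraic $\Rightarrow$ finite-dimensional $p$-kernel span,'' which is the substantive half: starting from a polynomial equation satisfied by $f$ one produces, after clearing denominators and passing to a suitable integral model, a finitely generated $\bK[t_1,\dots,t_r]$-module $M$ containing (a normalization of) $f$ and stable under every $\Lambda_{\mathbf i}$; Noetherianity bounds the $\bK$-rank of the $\Lambda$-orbit, hence $\dim_{\bK}V_f$. The converse direction is the linear-algebra step already visible above: a $\bK$-basis $\mathbf g$ of a finite-dimensional $V_f$ satisfies a relation $\mathbf g=B(\mathbf t)\,\mathbf g^{[p]}$ with $B$ a polynomial matrix, and reducing this modulo a minimal $\bK(t_1,\dots,t_r)$-linear dependence among the successive Frobenius twists $f,f^{p},f^{p^2},\dots$ yields a nonzero (additive-polynomial) algebraic relation for $f$. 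Both halves are classical, and for the write-up I would either cite \cite{sw-alg} (and its several-variable extensions) or reproduce these short module-theoretic arguments. I should also note an equivalent reformulation that merely relocates the difficulty: $f\odot g$ is the diagonal of the product $f(\mathbf s)\,g(\mathbf t)$ of two algebraic series in disjoint variables, so \Cref{swMainThm} is equivalent to the positive-characteristic fact that diagonals of algebraic power series are algebraic (Deligne/Christol), and one could instead build the proof on that input.
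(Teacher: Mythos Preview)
The paper does not prove this theorem: it is stated as a quotation of the main result of \cite{sw-alg} and used as a black box (to establish \Cref{algebraiccharp}). Your proposal is essentially a sketch of the Sharif--Woodcock proof itself---reduction to a perfect ground field, the Cartier/Christol characterization of algebraicity via finite-dimensionality of the span of the $p$-kernel, and the multiplicativity $\Lambda_{\mathbf i}(f\odot g)=\Lambda_{\mathbf i}f\odot\Lambda_{\mathbf i}g$---so there is nothing to compare against in the paper beyond the citation, and your argument is correct as a summary of that source.
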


\begin{proposition}\label{algebraicindices}
If $(a_{i_1, ..., i_\ell})_{i_1, ..., i_\ell}$ is an algebraic multi-sequence and for each $(\ell-1)$-tuple $(i_1, ..., i_{\ell-1})$, $a_{i_1,  ..., i_\ell}\not=0$ for finitely many $i_\ell$, then ${\bf b}=(b_{i_1,  ..., i_{\ell-1}})_{i_1,  ..., i_{\ell-1}}$ given by
\begin{equation*}b_{i_1,  ..., i_{\ell-1}}= \sum_{i_\ell} a_{i_1, ..., i_\ell}
\end{equation*} 
is an algebraic (multi-)sequence. 
\end{proposition}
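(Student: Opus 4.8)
The plan is to mimic the proof of the rational analogue, part~\Cref{item:9} of \Cref{th:prodisrat}: the Hilbert series $H_{\bf b}$ is obtained from $H_{\bf a}$ by substituting $1$ for the last variable $t_\ell$, and the work is in checking that this substitution preserves algebraicity. What makes the substitution legitimate is precisely the finite-vanishing hypothesis: for each $(\ell-1)$-tuple $(i_1,\dots,i_{\ell-1})$, the coefficient of $t_1^{i_1}\cdots t_{\ell-1}^{i_{\ell-1}}$ in $H_{\bf a}$, namely $\sum_{i_\ell}a_{i_1,\dots,i_\ell}t_\ell^{i_\ell}$, is a \emph{polynomial} in $t_\ell$. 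Hence $H_{\bf a}$ lies in the subring $R:=\bK[t_\ell][[t_1,\dots,t_{\ell-1}]]\subseteq \bK[[t_1,\dots,t_\ell]]$, and the assignment $t_\ell\mapsto 1$ defines a ring homomorphism $\phi\colon R\to \bK[[t_1,\dots,t_{\ell-1}]]$ with $\phi(H_{\bf a})=H_{\bf b}$.

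First I would fix an algebraic equation $\sum_{i=0}^d P_i(t_1,\dots,t_\ell)\,H_{\bf a}^i=0$ with $P_i\in \bK[t_1,\dots,t_\ell]$ not all zero. The only thing that can go wrong upon applying $\phi$ is that every $P_i$ becomes $0$, i.e. that $(t_\ell-1)$ divides all of them. To forestall this, let $e$ be the largest integer with $(t_\ell-1)^e\mid P_i$ for every $i$ (this makes sense since $\bK[t_1,\dots,t_\ell]$ is a UFD and $t_\ell-1$ is irreducible), and write $P_i=(t_\ell-1)^e\widetilde P_i$, so that not every $\widetilde P_i$ is divisible by $(t_\ell-1)$ and no nonzero $P_i$ has become zero. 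Cancelling the non-zero-divisor $(t_\ell-1)^e$ in the domain $\bK[[t_1,\dots,t_\ell]]$ leaves the identity $\sum_i \widetilde P_i\,H_{\bf a}^i=0$.

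Now I would apply $\phi$. Since $R$ is a ring, $H_{\bf a}^i\in R$ for all $i$, so $\sum_i \widetilde P_i(t_1,\dots,t_{\ell-1},1)\,H_{\bf b}^i=0$ holds in $\bK[[t_1,\dots,t_{\ell-1}]]$; and at least one coefficient $\widetilde P_i(t_1,\dots,t_{\ell-1},1)$ is nonzero, because for $Q\in \bK[t_1,\dots,t_{\ell-1}][t_\ell]$ one has $Q(\cdot,1)=0$ if and only if $(t_\ell-1)\mid Q$. Thus $H_{\bf b}$ satisfies a nontrivial polynomial equation over $\bK[t_1,\dots,t_{\ell-1}]$, which is exactly the statement that ${\bf b}$ is algebraic.

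The genuinely delicate point, and the step I expect to require the most care, is the bookkeeping around the substitution $t_\ell=1$: a general element of $\bK[[t_1,\dots,t_\ell]]$ cannot be evaluated there, so one must verify both that $H_{\bf a}$ and all its powers remain inside the subring $R$ on which $t_\ell\mapsto 1$ is a well-defined homomorphism, and that clearing the common $(t_\ell-1)$-factor keeps the defining equation nontrivial after specialization. Everything else is formal, and the argument is characteristic-free.
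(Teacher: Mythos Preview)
Your proof is correct and follows essentially the same route as the paper: specialize $t_\ell\mapsto 1$ in a defining algebraic equation for $H_{\bf a}$, after first arranging that the polynomial coefficients do not all die under this substitution. The only cosmetic difference is that the paper clears \emph{all} common factors of the $P_i$ up front, whereas you clear only the common $(t_\ell-1)$-power; your version is slightly more targeted and you are more explicit than the paper about why the substitution is legitimate (identifying the subring $R=\bK[t_\ell][[t_1,\dots,t_{\ell-1}]]$ and noting it is closed under products), but the underlying argument is the same.
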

\begin{proof}
Since ${\bf a}=(a_{i_1,..., i_\ell})_{i_1, ..., i_\ell}$ is an algebraic multi-sequence, 
\begin{equation*}
H_{\bf a}(x_1,..., x_\ell)=\sum_{s_1, ...,s_\ell}a_{s_1, ..., s_\ell}x_1^{s_1}\cdots x_\ell^{s_\ell}
\end{equation*}
satisfies
\begin{equation}\label{algebraiceqn}
P_d(x_1,..., x_\ell)H_{\bf a}(x_1,..., x_\ell)^d+\cdots +P_1(x_1,..., x_\ell)H_{\bf a}(x_1,..., x_\ell)+P_0(x_1,..., x_\ell)=0
\end{equation}
for some $d$, where $\{P_i(x_1,..., x_\ell)\}_{i=0}^d$ are non-vanishing polynomials. Further, we can assume that these polynomials share no common factors; if there were such a common factor, it could be factored from Equation \Cref{algebraiceqn}, yielding an equation in this form where the polynomials do not share a common factor. 
 \\~\\
Since for each $(i_1, ..., i_{\ell-1})$, $a_{i_1, ..., i_\ell}\not=0$ for finitely many $i_\ell$, we consider $H_{\bf a}(x_1, ..., x_{\ell-1}, 1)$ as follows
\begin{equation*}
  H_{\bf a}(x_1, ..., x_{\ell-1}, 1)=\sum_{s_1, ...,s_\ell}a_{s_1, ..., s_\ell}x_1^{s_1}\cdots x_{\ell-1}^{s_{\ell-1}}=\sum_{s_1, ...,s_{\ell-1}}\left( \sum_{s_\ell} a_{s_1, ..., s_\ell}\right) x_1^{s_1}\cdots x_{\ell-1}^{s_{\ell-1}}. 
\end{equation*}
\\~\\
In particular, we notice that $H_{\bf b}(x_1, ..., x_{\ell-1})=H_{\bf a}(x_1, ..., x_{\ell-1}, 1)$. Letting $x_\ell=1$ in Equation \ref{algebraiceqn}, we obtain
\begin{equation*}
P_d(x_1,...,x_{\ell-1}, 1)H_{\bf b}(x_1, ..., x_{\ell-1})^d+\cdots +P_1(x_1,...,x_{\ell-1}, 1)H_{\bf b}(x_1, ..., x_{\ell-1})+P_0(x_1,...,x_{\ell-1}, 1)=0.
\end{equation*}
It is not the case that for all $1\leq i\leq d$, $P_i(x_1, ...,x_{\ell-1}, 1)=0$, since the polynomials $\{P_i(x_1,..., x_\ell)\}_{i=0}^d$ share no common factors, therefore ${\bf b}$ is an algebraic multi-sequence. 
\end{proof}


\section{Polynomial invariants}\label{se.poly}

Since for $\Omega$-algebraic modules $M$ we are reduced to examining the entries of the powers of a matrix over $\bZ[\Omega^{\pm 1}]$, the question arises of whether $\dim \Omega^n M$ is eventually polynomial in $n$. More generally, one can ask this of the ``size'' of $\Omega^n M$ in various guises: dimension, length, length of the socle, etc.

Certainly, $\dim \Omega^nM$ has polynomial {\it growth}: there is a smallest non-negative integer $s$ such that
\begin{equation}\label{eq:1}
  \dim\Omega^nM = O(n^s)
\end{equation}
is standard big-O notation. That $s$ is precisely the {\it complexity} $\mathrm{cx}(M)=\mathrm{cx}_G(M)$, as covered for instance in \cite[\S 2.24]{ben-mod}.

Requiring that $\dim\;\Omega^nM$ be eventually polynomial in $n$ is too much to ask though: when $\mathrm{cx}(M)=1$ the module $M$ is {\it periodic}, in the sense that $\Omega^TM\cong M$ for some $T$ and the sequence is simply periodic. This remark and \cite[Theorem 3.4]{bc-mult} are suggestive of the possibility that perhaps \Cref{eq:1} is always decomposable as a disjoint union of eventually-polynomial sequences. We will see below that this is indeed the case.

Recall the following notion, e.g. from \cite[\S 4.4]{enum1}.

\begin{definition}\label{def.per-poly}
  A sequence $(a_n)$ is {\it quasipolynomial of quasiperiod $T$} if there are polynomials $P_i$, $0\le i\le T-1$ such that
  \begin{equation*}
    a_n = P_{n\;\mathrm{mod}\;T}(n),\ \forall n.
  \end{equation*}
  It is {\it eventually} quasipolynomial if this constraint holds for sufficiently large $n$.
\end{definition}


For a simple $S\in \mathrm{mod}~kG$ and a finite-dimensional $G$-module $M$ we write $\ell_SM$ for the multiplicity of $S$ in $M$ as a composition factor. We then have the following result (essentially contained in \cite[\S 5.3]{bnsn-rep2}).

\begin{proposition}\label{pr.lsoc-poly}
  For a finite group $G$, a finite-dimensional $G$-module $M$ and a simple $G$-module $S$ the sequence
  \begin{equation*}
    n\mapsto \ell_S(\mathrm{soc}\; \Omega^nM) 
  \end{equation*}
  is eventually quasipolynomial in $n$. The same goes for $\Omega^{-n}$ in place of $\Omega^n$.
\end{proposition}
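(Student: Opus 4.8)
The plan is to reduce the statement to a periodicity fact about minimal resolutions coupled with \Cref{th:prodisrat}. First I would recall that over the group algebra $kG$, the functor $\Omega$ acts on the (finite!) set of isomorphism classes of non-projective indecomposables, but more usefully that the minimal projective resolution of any fixed module $M$ is \emph{eventually periodic up to a twist by the stable structure} only when $\mathrm{cx}(M)\le 1$, so I cannot use literal periodicity. Instead, the right tool is that the terms $P_n$ of a minimal projective resolution $\cdots\to P_1\to P_0\to M\to 0$ have multiplicities $\ell_S(P_n/\mathrm{rad}\,P_n)$ (equivalently the graded pieces of $\mathrm{Ext}^*_{kG}(M,S')$ over all simples $S'$) which form eventually quasipolynomial sequences in $n$: this is exactly the content of the cited \cite[\S 5.3]{bnsn-rep2} (finite generation of cohomology à la Evens--Venkov forces the Hilbert series of $\mathrm{Ext}^*_{kG}(M,S')$ to be rational with poles only at roots of unity, hence its coefficients are eventually quasipolynomial). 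So the first key step is: \emph{for every simple $S'$ the sequence $n\mapsto [\,P_n:P(S')\,]$ (multiplicity of the projective cover of $S'$ as a summand of the $n$-th term) is eventually quasipolynomial.}

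The second step converts resolution data into socle-of-syzygy data. Since $\Omega^nM$ sits in the exact sequence $0\to\Omega^nM\to P_{n-1}\to\Omega^{n-1}M\to 0$, and a projective $kG$-module $P(S')$ has simple socle $S'$ (group algebras are symmetric, so $\mathrm{soc}\,P(S')\cong S'$), one sees that $\mathrm{soc}\,\Omega^nM$ is a submodule of $\mathrm{soc}\,P_{n-1}=\bigoplus_{S'}S'^{\,[P_{n-1}:P(S')]}$. In fact minimality of the resolution gives more: $\mathrm{soc}\,\Omega^nM$ is precisely the kernel of $\mathrm{soc}\,P_{n-1}\to\mathrm{soc}\,\Omega^{n-1}M$, and a standard diagram chase (or passing to the corresponding statement for the injective resolution and using $\Omega^{-1}$, dually) identifies $\ell_S(\mathrm{soc}\,\Omega^nM)$ with an alternating-type combination of the numbers $[P_{n-1}:P(S')]$ and $[P_{n-2}:P(S')]$ — or, even more cleanly, with the dimension of a fixed graded piece of $\mathrm{Ext}$, again eventually quasipolynomial by Step 1. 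The cleanest route may be to avoid the diagram chase entirely and observe that $\ell_S(\mathrm{soc}\,\Omega^n M)=\dim\mathrm{Hom}_{kG}(S,\Omega^nM)=\dim\underline{\mathrm{Hom}}_{kG}(\Omega^{-n}S,M)=\dim\underline{\mathrm{Hom}}_{kG}(S,\Omega^nM)$ for $n$ large, which is $\dim\widehat{\mathrm{Ext}}^{\,n}_{kG}(S,M)$ in Tate cohomology, and for $n$ large this agrees with ordinary $\mathrm{Ext}^n_{kG}(S,M)$; finite generation of $\mathrm{Ext}^*_{kG}(S,M)$ over the Noetherian ring $H^*(G;k)$ then yields a rational Hilbert series with denominator a product of cyclotomics, hence eventual quasipolynomiality.

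The third step is just bookkeeping: eventual quasipolynomiality is equivalent to the Hilbert series being rational with all poles at roots of unity, which is a special case of rationality in the sense of \Cref{def:evrec}, and \Cref{th:prodisrat} (or \Cref{th:st-rec} directly) guarantees this class is what we want; alternatively one invokes \Cref{def.per-poly} literally after splitting $n$ into residue classes mod $T$. For the $\Omega^{-n}$ statement one repeats the argument with injective resolutions in place of projective ones, using that $kG$ is self-injective so the two notions of resolution are interchangeable, or simply dualizes via $M\mapsto M^*$, which swaps $\Omega$ and $\Omega^{-1}$ and sends $\mathrm{soc}$ to $\mathrm{top}$ while preserving composition-factor multiplicities up to taking duals of simples.

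\textbf{Main obstacle.} The substantive input is the finite generation of Tate (or ordinary) cohomology $\mathrm{Ext}^*_{kG}(S,M)$ as a module over $H^*(G;k)$ — the Evens--Venkov theorem together with Noetherianity — and the translation of that into quasipolynomial growth of graded dimensions; the rest is formal. The one genuinely delicate point to get right is the identification of $\ell_S(\mathrm{soc}\,\Omega^nM)$ with a stable-$\mathrm{Hom}$ (equivalently Tate-$\mathrm{Ext}$) group, including checking that the unstable correction terms are supported in bounded degree so that "eventually" quasipolynomial is legitimate; this is where I expect to spend the most care, and where citing \cite[\S 5.3]{bnsn-rep2} as the excerpt suggests is the cleanest move.
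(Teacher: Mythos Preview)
Your proposal has the right architecture and matches the paper's approach: Evens--Venkov finite generation of $\mathrm{Ext}^*$ over $H^*(G,k)$ forces a rational Hilbert series with poles only at roots of unity, hence eventual quasipolynomiality, once the socle multiplicity is identified with an $\mathrm{Ext}$ dimension. The paper executes this in two lines by treating the $\Omega^{-n}$ case first (where $\dim\mathrm{Hom}(S,\Omega^{-n}M)=\dim\mathrm{Ext}^n(S,M)$ is immediate from the minimal injective resolution) and then invoking duality for $\Omega^n$.

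There is, however, a genuine index error in your ``cleanest route''. You write $\underline{\mathrm{Hom}}_{kG}(S,\Omega^nM)=\widehat{\mathrm{Ext}}^{\,n}_{kG}(S,M)$ and then say this agrees with ordinary $\mathrm{Ext}^n$ for large $n$. But by definition $\widehat{\mathrm{Ext}}^{\,n}(S,M)=\underline{\mathrm{Hom}}(S,\Omega^{-n}M)$, so what you have is $\widehat{\mathrm{Ext}}^{-n}(S,M)$, i.e.\ \emph{negative} Tate cohomology, which does not coincide with ordinary $\mathrm{Ext}^n(S,M)$ for $n\ge 1$. The statement as written is false and the argument does not close.

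The easiest repair is exactly the paper's move: swap the roles of $\Omega$ and $\Omega^{-1}$ and prove the $\Omega^{-n}$ statement directly, deducing the $\Omega^n$ case by duality (your own third step already anticipates this). Alternatively, if you want to stay on the $\Omega^n$ side, your Step~1 data actually suffices without any alternating sums or diagram chase: since $kG$ is symmetric, $\mathrm{soc}\,P(S')\cong S'$, and for $n\ge 1$ minimality forces $\mathrm{soc}\,P_{n-1}\subseteq\Omega^nM$, so $\mathrm{soc}\,\Omega^nM=\mathrm{soc}\,P_{n-1}$ and hence $\ell_S(\mathrm{soc}\,\Omega^nM)=[P_{n-1}:P(S)]=\dim\mathrm{Ext}^{n-1}_{kG}(M,S)$. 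Now apply Evens to $\mathrm{Ext}^*(M,S)$ rather than $\mathrm{Ext}^*(S,M)$. Your ``alternating-type combination of $[P_{n-1}:P(S')]$ and $[P_{n-2}:P(S')]$'' is unnecessary and would in any case have to be justified.
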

\begin{proof}
  The two versions are interchanged by duality, so it suffices to prove the claim for the cosyzygy functors $\Omega^{-n}$.
  
  According to \cite[Theorem 8.1]{ev-fg} the cohomology
  \begin{equation*}
    \mathrm{Ext}^n(S,M)\cong H^n(G,M\otimes S^*)
  \end{equation*}
  is a finitely generated graded module over the finitely generated skew-commutative graded ring $H^*(G)$. It follows from standard Hilbert-Samuel theory (e.g. \cite[Proposition 5.3.1]{bnsn-rep2}) that the Hilbert series of
  \begin{equation}\label{eq:9}
    n\mapsto \dim\;H^n(G,M\otimes S^*)=\dim\; \mathrm{Ext}^n(S,M)
  \end{equation}
  is of the form $\frac{P(n)}{Q(n)}$ for polynomials $P$ and $Q$ with the zeroes of $Q$ being roots of unity. It then follows from \cite[Proposition 4.4.1]{enum1} that \Cref{eq:9} is eventually quasipolynomial. Since for $n\ge 1$ we have
\begin{equation*}
\dim\; \mathrm{Ext}^n(S,M)=\dim \mathrm{Hom}(S,\Omega^{-n}M) = \text{number of }S\text{ summands of }\mathrm{soc}\;\Omega^{-n}M,
\end{equation*}
this finishes the proof. 
\end{proof}

\begin{corollary}\label{cor.ev-poly}
  Let $G$ be a finite group and $F:\mathrm{mod}~kG\to \mathrm{Vect}^f$ a linear functor. For a finite-dimensional $G$-module $M$ the sequence
  \begin{equation*}
    n\mapsto \dim F(\mathrm{soc}\; \Omega^nM)
  \end{equation*}
  is eventually quasipolynomial in $n$. The same goes for $\Omega^{-n}$ in place of $\Omega^n$.
\end{corollary}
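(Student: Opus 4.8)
The plan is to reduce \Cref{cor.ev-poly} to \Cref{pr.lsoc-poly} by exploiting the fact that, up to finite ambiguity, $F$ is determined on objects by the composition multiplicities of the simple modules. First I would recall that for any linear functor $F:\mathrm{mod}~kG\to \mathrm{Vect}^f$ and any short exact sequence $0\to A\to B\to C\to 0$ of $G$-modules one does \emph{not} in general have additivity of $\dim F$; however, we are applying $F$ only to the \emph{semisimple} module $\mathrm{soc}\,\Omega^nM$. On semisimple modules every short exact sequence splits, so $\dim F$ \emph{is} additive there: writing $\mathrm{soc}\,\Omega^nM\cong\bigoplus_{S}S^{\oplus \ell_S(\mathrm{soc}\,\Omega^nM)}$ as $S$ ranges over a set of representatives of the (finitely many) isomorphism classes of simple $G$-modules, we get
\begin{equation*}
  \dim F(\mathrm{soc}\,\Omega^nM) = \sum_{S}\ell_S(\mathrm{soc}\,\Omega^nM)\cdot \dim F(S).
\end{equation*}

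Next I would invoke \Cref{pr.lsoc-poly}: for each of the finitely many simple $S$, the sequence $n\mapsto \ell_S(\mathrm{soc}\,\Omega^nM)$ is eventually quasipolynomial in $n$. The displayed formula then expresses $n\mapsto\dim F(\mathrm{soc}\,\Omega^nM)$ as a finite $\bK$-linear (in fact $\bN$-weighted, since $\dim F(S)\ge 0$) combination of eventually quasipolynomial sequences. Since eventually quasipolynomial sequences are closed under addition and under scaling by constants — one can for instance take a common quasiperiod $T$ (the lcm of the individual quasiperiods) and add the corresponding residue-class polynomials, or simply note that eventually quasipolynomial sequences are exactly those whose Hilbert series is rational with denominator a product of cyclotomic-type factors, which is manifestly closed under $\bK$-linear combination — the sum is again eventually quasipolynomial in $n$. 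This handles the $\Omega^n$ case; the $\Omega^{-n}$ case follows verbatim using the second assertion of \Cref{pr.lsoc-poly} (or by the duality argument already used in its proof).

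There is essentially no obstacle here beyond making the additivity-on-semisimples point explicit; the only thing to be slightly careful about is that $F$ is assumed linear but not exact, so one must not attempt to reduce via composition series of a general module — the argument genuinely relies on the target of $F$ being applied to $\mathrm{soc}\,\Omega^nM$, which is semisimple, so that the ambient (non-)exactness of $F$ is irrelevant. I would state the proof in three short sentences: decompose the socle into isotypic components, apply linearity of $F$ across the (split) direct sum to reduce $\dim F(\mathrm{soc}\,\Omega^nM)$ to a nonnegative-integer combination of the sequences $\ell_S(\mathrm{soc}\,\Omega^nM)$, and conclude by \Cref{pr.lsoc-poly} together with closure of eventual quasipolynomiality under finite linear combinations.
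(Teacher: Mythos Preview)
Your proof is correct and follows exactly the paper's approach: decompose the semisimple module $\mathrm{soc}\,\Omega^nM$ into isotypic components, use linearity of $F$ to write $\dim F(\mathrm{soc}\,\Omega^nM)=\sum_S \ell_S(\mathrm{soc}\,\Omega^nM)\dim F(S)$, and apply \Cref{pr.lsoc-poly}. Your added remark that semisimplicity (rather than exactness of $F$) is what makes the additivity go through is a helpful clarification that the paper leaves implicit.
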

\begin{proof}
  Immediate from \Cref{pr.lsoc-poly}, given that
  \begin{equation*}
    F(\mathrm{soc}\; \Omega^nM) \cong \bigoplus_{\text{simple }S}F(S)^{\oplus \ell_S(\mathrm{soc}\; \Omega^nM)}
  \end{equation*}
  and hence
  \begin{equation*}
    \dim F(\mathrm{soc}\; \Omega^nM) = \sum_{\text{simple }S}\ell_s \dim F(S). 
  \end{equation*}
\end{proof}

We also have the following version, for $\Omega^n M$ rather than their socles. 

\begin{theorem}\label{th.f-per}
  Let $G$ be a finite group and $F:\mathrm{mod}~kG\to \mathrm{Vect}^f$ an exact functor. For a finite-dimensional $G$-module $M$ the sequence
  \begin{equation*}
    n\mapsto \dim F(\Omega^nM)
  \end{equation*}
  is eventually quasipolynomial in $n$. The same goes for $\Omega^{-n}$ in place of $\Omega^n$.  
\end{theorem}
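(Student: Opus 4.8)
The plan is to reduce the statement about $\dim F(\Omega^n M)$ to the already-established socle version, \Cref{cor.ev-poly}, by means of an induction on the \emph{radical length} (equivalently, the Loewy length) of $M$, exploiting the exactness of $F$. As in \Cref{pr.lsoc-poly} it suffices to treat the cosyzygies $\Omega^{-n}$, the case of $\Omega^n$ following by duality. Working in $\mathrm{stmod}~kG$, where $\Omega^{-1}$ is an exact autoequivalence, the short exact sequence $0\to \mathrm{soc}\, M\to M\to M/\mathrm{soc}\, M\to 0$ gives, after applying $\Omega^{-n}$ and then the exact functor $F$, a short exact sequence
\begin{equation*}
  0\to F(\Omega^{-n}\,\mathrm{soc}\, M)\to F(\Omega^{-n}M)\to F(\Omega^{-n}(M/\mathrm{soc}\, M))\to 0,
\end{equation*}
so that $\dim F(\Omega^{-n}M)=\dim F(\Omega^{-n}\,\mathrm{soc}\, M)+\dim F(\Omega^{-n}(M/\mathrm{soc}\, M))$ for every $n$. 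The second term is handled by the inductive hypothesis, since $M/\mathrm{soc}\, M$ has strictly smaller radical length. Thus everything comes down to the base-of-the-induction situation where the relevant module is \emph{semisimple}.

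For semisimple $N=\bigoplus_S S^{\oplus m_S}$ one has $\dim F(\Omega^{-n}N)=\sum_S m_S \dim F(\Omega^{-n}S)$, so it further reduces to a single simple module $S$. Here the point is that $\mathrm{soc}\,\Omega^{-n}S$ does \emph{not} literally see all of $\Omega^{-n}S$; however, $\Omega^{-n}S=\Omega^{-1}(\Omega^{-(n-1)}S)$ is by definition a cokernel $I/\Omega^{-(n-1)}S$ where $I$ is an injective hull of $\Omega^{-(n-1)}S$, and $\mathrm{soc}\, I=\mathrm{soc}\,\Omega^{-(n-1)}S$. Applying the exact functor $F$ to the short exact sequences $0\to \Omega^{-(n-1)}S\to I\to \Omega^{-n}S\to 0$ and using that $\dim F(I)$ is a finite sum over the injective indecomposables $I_T$ of $\dim F(I_T)$ weighted by $\ell_T(\mathrm{soc}\,\Omega^{-(n-1)}S)$, I get the telescoping relation
\begin{equation*}
  \dim F(\Omega^{-n}S)=\Big(\sum_T \ell_T(\mathrm{soc}\,\Omega^{-(n-1)}S)\,\dim F(I_T)\Big)-\dim F(\Omega^{-(n-1)}S).
\end{equation*}
The bracketed quantity is eventually quasipolynomial in $n$ by \Cref{pr.lsoc-poly} (it is a fixed finite linear combination of the sequences $n\mapsto \ell_T(\mathrm{soc}\,\Omega^{-(n-1)}S)$). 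Since quasipolynomial sequences are closed under the inverse of the difference operator $a_n\mapsto a_n+a_{n-1}$ up to the usual eventual caveats—concretely, if $c_n$ is eventually quasipolynomial then any solution of $a_n+a_{n-1}=c_n$ is eventually quasipolynomial, as one checks via Hilbert series using that $1+t$ contributes only the root of unity $-1$ to the denominator, exactly the shape allowed by \cite[Proposition 4.4.1]{enum1}—we conclude that $n\mapsto \dim F(\Omega^{-n}S)$ is eventually quasipolynomial. Unwinding the induction finishes the proof.

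The step I expect to be the main obstacle is the bookkeeping around projectives and the passage between $\mathrm{mod}~kG$ and $\mathrm{stmod}~kG$: the short exact sequences above hold on the nose in $\mathrm{mod}~kG$, but $\Omega^{-1}$ is only functorial after stabilizing, and $F$ need not kill projective-injectives, so one must be careful that the injective hulls $I$ appearing at each stage contribute a \emph{controlled} (eventually quasipolynomial) amount to $\dim F$—which is precisely why the socle data of $\Omega^{-(n-1)}S$ suffices, since $\mathrm{soc}\, I\cong\mathrm{soc}\,\Omega^{-(n-1)}S$ determines $I$. A secondary point requiring care is confirming the closure of (eventually) quasipolynomial sequences under solving the recurrence $a_n+a_{n-1}=c_n$; this is where the hypothesis that $F$ is exact (rather than merely additive, as in \Cref{cor.ev-poly}) is genuinely used, since it is exactness that produces the additive relations on dimensions from the short exact sequences.
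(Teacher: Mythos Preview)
Your induction step is where things break, and not merely as a bookkeeping matter. The claim that applying $\Omega^{-n}$ to $0\to \mathrm{soc}\, M\to M\to M/\mathrm{soc}\, M\to 0$ yields a short exact sequence $0\to \Omega^{-n}\mathrm{soc}\, M\to \Omega^{-n}M\to \Omega^{-n}(M/\mathrm{soc}\, M)\to 0$ in $\mathrm{mod}~kG$ is false: $\Omega^{-1}$ is exact only as an autoequivalence of the \emph{triangulated} category $\mathrm{stmod}~kG$, meaning it takes triangles to triangles, not that it preserves short exact sequences of honest modules. Lifting back to $\mathrm{mod}~kG$ via the horseshoe lemma produces instead $0\to \Omega^{-n}\mathrm{soc}\, M\to \Omega^{-n}M\oplus Q_n\to \Omega^{-n}(M/\mathrm{soc}\, M)\to 0$ with a projective correction $Q_n$, and since $F$ need not kill projectives the additivity $\dim F(\Omega^{-n}M)=\dim F(\Omega^{-n}\mathrm{soc}\, M)+\dim F(\Omega^{-n}(M/\mathrm{soc}\, M))$ fails outright. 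Concretely: take $G=\bZ/3$, $k=\bF_3$, $M$ the two-dimensional indecomposable, $F$ the forgetful functor. Then $\mathrm{soc}\, M\cong M/\mathrm{soc}\, M\cong k$, and at $n=1$ one has $\dim \Omega^{-1}M=1$ while $\dim \Omega^{-1}k+\dim \Omega^{-1}k=2+2=4$.

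The good news is that your argument for simple $S$ nowhere uses simplicity. The short exact sequence $0\to \Omega^{-(n-1)}S\to I\to \Omega^{-n}S\to 0$ with $I$ the injective hull of $\Omega^{-(n-1)}S$, the identification of $I$ via $\mathrm{soc}\,\Omega^{-(n-1)}S$, the appeal to \Cref{pr.lsoc-poly}, and the closure of eventually quasipolynomial sequences under solving $a_n+a_{n-1}=c_n$ all work verbatim for arbitrary $M$ in place of $S$. Dropping the induction and running that argument directly is precisely the paper's proof (the paper packages it via the minimal injective resolution of $M$ and derives the relation $\dim F\Omega^{-(n+2)}M-\dim F\Omega^{-n}M=\dim FI_{n+1}-\dim FI_n$, but the content is the same). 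So the Loewy-length detour is both unnecessary and the one place where your proposal actually fails.
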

\begin{proof}
  For variety, we focus on $\Omega^{-n}$ this time around.

  Consider a minimal injective resolution
  \begin{equation}\label{eq:7}
    0\to M\to I_0\to I_1\to \cdots
  \end{equation}
  As argued in \cite[\S 5.3]{bnsn-rep2}, for each simple $S$ the multiplicity $m_{S,n}$ of its injective hull $I_S$ as a summand of $I_n$ has a Hilbert series as in the proof of \Cref{pr.lsoc-poly}: rational, with root-of-unity poles. It once more follows from \cite[Proposition 4.4.1]{enum1} that $n\mapsto m_{S,n}$ is eventually quasipolynomial, and hence so is
  \begin{equation}\label{eq:10}
    n\mapsto \dim FI_n = \sum_{\text{simple }S}m_{S,n}\dim FI_S. 
  \end{equation}


  Applying the exact functor $F$ to \Cref{eq:7} produces a long exact sequence,
  \begin{equation*}
    0\to FM\to FI_0\to FI_1\to \cdots
  \end{equation*}
  resulting from splicing together the short exact sequences
  \begin{equation*}
    0\to F\Omega^{-n+1}M \to FI_{n-1} \to F\Omega^{-n}M\to 0,\ n\ge 1.
  \end{equation*}
  These short exact sequences in turn imply that
  \begin{equation*}
    \dim F\Omega^{-n}M = \dim FI_{n-1}-\dim FI_{n-2}+\dim FI_{n-3}-\cdots +(-1)^{n} \dim FM
  \end{equation*}
  (note that the signs alternate).

  We thus obtain
  \begin{equation*}
    \dim F\Omega^{-(n+2)}M-\dim F\Omega^{-n}M = \dim FI_{n+1}-\dim FI_{n},
  \end{equation*}
  and hence the conclusion follows from the quasipolynomial character of \Cref{eq:10}. 
\end{proof}

As an immediate consequence, we have the announced result on dimensions:

\begin{corollary}\label{cor.dim}
  For $G$ and $M$ as in \Cref{th.f-per} the sequence
  \begin{equation*}
    n\mapsto \dim \Omega^nM
  \end{equation*}
  is eventually quasipolynomial in $n$, and similarly for $\Omega^{-n}$
\end{corollary}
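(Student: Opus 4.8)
The plan is to derive this immediately from \Cref{th.f-per} by exhibiting the identity functor as an instance of the exact functors covered there. Indeed, the forgetful functor $\mathrm{mod}~kG\to \mathrm{Vect}^f$ sending a $G$-module to its underlying $k$-vector space is exact, and for it $\dim F(\Omega^{\pm n}M)$ is precisely $\dim \Omega^{\pm n}M$. So \Cref{th.f-per}, applied with $F$ the forgetful functor, yields that $n\mapsto \dim\Omega^nM$ and $n\mapsto \dim\Omega^{-n}M$ are eventually quasipolynomial.

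Concretely, the proof reads roughly as follows: apply \Cref{th.f-per} to the exact functor $F=(-)|_k:\mathrm{mod}~kG\to\mathrm{Vect}^f$ that forgets the $G$-action; since $\dim F(N)=\dim N$ for every $N\in\mathrm{mod}~kG$, both $n\mapsto \dim\Omega^nM=\dim F(\Omega^nM)$ and $n\mapsto\dim\Omega^{-n}M=\dim F(\Omega^{-n}M)$ are eventually quasipolynomial in $n$ by that theorem. That is all there is to it.

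There is essentially no obstacle here; the only point worth a line of comment is the verification that the forgetful functor is exact, which is clear since a sequence of $G$-modules is exact if and only if it is exact as a sequence of $k$-vector spaces (exactness of $G$-module maps is checked on underlying spaces). If one wishes to be slightly more careful about the convention that $\Omega^0 M$ should be read as $core(M)$ in the stable category, one notes that $\Omega$ and $\Omega^{-1}$ only alter a module by projective (hence injective, since $kG$ is self-injective) summands, so the ambiguity affects $\dim\Omega^nM$ only by additive contributions that are themselves dimensions of projectives appearing in resolutions; but the quasipolynomiality statement of \Cref{th.f-per} is insensitive to this since it is already phrased for the functor applied to the honest modules $\Omega^{\pm n}M$. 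Thus the corollary follows directly.
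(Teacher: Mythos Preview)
Your proposal is correct and matches the paper's own proof essentially verbatim: the paper also simply takes $F$ in \Cref{th.f-per} to be the forgetful functor from $G$-modules to vector spaces. The extra paragraph about $\Omega^0 M$ and projective summands is harmless but unnecessary here.
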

\begin{proof}
  Simply take $F$ of \Cref{th.f-per} to be the forgetful functor from $G$-modules to vector spaces.
\end{proof}

The same goes for lengths rather than dimensions: 

\begin{corollary}\label{cor.l}
  For $G$ and $M$ as in \Cref{th.f-per} and a simple module $S\in \mathrm{mod}~kG$ the sequence
  \begin{equation*}
    n\mapsto \ell_S(\Omega^nM)
  \end{equation*}
  is eventually quasipolynomial in $n$, and similarly for $\Omega^{-n}$
\end{corollary}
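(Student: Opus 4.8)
The plan is to deduce this from \Cref{th.f-per} by choosing the functor $F$ appropriately, exactly as \Cref{cor.dim} does. For a fixed simple module $S\in\mathrm{mod}~kG$, I would like a functor $F:\mathrm{mod}~kG\to\mathrm{Vect}^f$ that is exact and satisfies $\dim F(N)=\ell_S(N)$ for every finite-dimensional $G$-module $N$. The natural candidate is not $\mathrm{Hom}_G(S,-)$ (which only sees the socle) but rather a ``$S$-isotypic multiplicity'' functor. Concretely, one takes $F$ to be the composite of the reduction $N\mapsto$ (image of $N$ in the Grothendieck group, realized as a vector space) with the projection onto the $S$-coordinate; more hands-on, one can use the fact that over the artinian ring $kG$ there is an idempotent-style construction, or simply invoke that $\ell_S$ is additive on short exact sequences and visibly finite-dimensional-valued, so it ``is'' the dimension of an exact $\mathrm{Vect}^f$-valued functor.

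The cleanest way to make this rigorous is to observe that $\ell_S$ factors through the split Grothendieck-type device as follows. First I would pass to a large enough field extension or just work directly: let $P_S\to S$ be the projective cover and set $F(N):=\mathrm{Hom}_G(P_S,N)/\mathrm{rad}$, i.e. the top-degree multiplicity space. A cleaner alternative, and the one I would actually write, is to use that $\mathrm{Hom}_G(P_S,-)$ is exact (projectivity of $P_S$) and that $\dim\mathrm{Hom}_G(P_S,N)=\sum_{T}\ell_T(N)\cdot\dim\mathrm{Hom}_G(P_S,T)=\ell_S(N)\cdot\dim\mathrm{Hom}_G(P_S,S)=\ell_S(N)\cdot[\,\text{something depending only on }S\,]$, using Schur's lemma when $k$ is a splitting field. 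So $\dim\mathrm{Hom}_G(P_S,-)$ is a constant multiple of $\ell_S$; since a positive constant multiple of an eventually quasipolynomial sequence is again eventually quasipolynomial, applying \Cref{th.f-per} to the exact functor $F=\mathrm{Hom}_G(P_S,-)$ finishes the argument. The same reasoning applies verbatim with $\Omega^{-n}$ in place of $\Omega^n$.

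The only genuine subtlety is the splitting-field hypothesis: if $\mathrm{End}_G(S)$ is a nontrivial division algebra $D$, then $\dim_k\mathrm{Hom}_G(P_S,N)=\dim_k D\cdot \ell_S(N)$ still holds, so the constant is $\dim_k D>0$ and the argument goes through unchanged. Hence I do not expect a real obstacle here; the proof is a one-line reduction once the right exact functor is identified, and I would simply write: ``Take $F=\mathrm{Hom}_G(P_S,-)$ in \Cref{th.f-per}, where $P_S$ is the projective cover of $S$; this functor is exact, and $\dim F(N)$ is a fixed positive multiple of $\ell_S(N)$, so the claim follows.''
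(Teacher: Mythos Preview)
Your proposal is correct and matches the paper's own proof essentially verbatim: the paper simply writes ``This is an application of \Cref{th.f-per} with $F=\mathrm{Hom}_G(P_S,-)\cong \ell_S(-)$, where $P_S\to S$ is the projective cover.'' Your version is in fact more careful than the paper's, since you explicitly address the constant factor $\dim_k\mathrm{End}_G(S)$ arising when $k$ is not a splitting field, whereas the paper's ``$\cong \ell_S(-)$'' glosses over this.
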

\begin{proof}
  This is an application of \Cref{th.f-per} with
  \begin{equation*}
    F=\mathrm{Hom}_G(P_S,-)\cong \ell_S(-),
  \end{equation*}
  where $P_S\to S$ is the projective cover. 
\end{proof}

As far as recursion goes, we now have 

\begin{corollary}\label{cor.rec}
  Let $G$ be a finite group as before, and $S,M\in \mathrm{mod}~kG$ a simple and an arbitrary $G$-module respectively. For exact functors $F:\mathrm{mod}~kG\to \mathrm{Vect}^f$ as in \Cref{th.f-per} or $F=\mathrm{Hom}_G(S,-)$ the sequence
  \begin{equation*}
    n\mapsto \dim F(\Omega^nM) 
  \end{equation*}
  is eventually linearly recursive, and the same goes for $\Omega^{-n}$. 
\end{corollary}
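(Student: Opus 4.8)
The plan is to deduce this directly from the quasipolynomiality results established just above, via the elementary observation that an eventually quasipolynomial sequence is eventually linearly recursive. First I would record this basic fact: if $(a_n)$ satisfies $a_n = P_{n\bmod T}(n)$ for all large $n$, then its Hilbert series is rational — indeed each quasipolynomial piece $P_i(n)$ contributes, after the substitution isolating the residue class $i \bmod T$, a rational summand with poles only at $T^{\text{th}}$ roots of unity — and hence by \Cref{th:st-rec} the sequence is eventually linearly recursive. (Alternatively one invokes \cite[Proposition 4.4.1]{enum1} in the reverse direction, or simply notes that $a_n = P_{n\bmod T}(n)$ means the $T$ interleaved subsequences are each eventually polynomial, hence eventually satisfy a common recurrence $(S^T - 1)^{d+1}$ where $d$ bounds the degrees, which upgrades to a single recurrence for $(a_n)$ itself.)

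Granting that, the corollary is immediate in the exact-functor case: by \Cref{th.f-per}, $n \mapsto \dim F(\Omega^n M)$ is eventually quasipolynomial, hence eventually linearly recursive, and the same argument applies verbatim with $\Omega^{-n}$ in place of $\Omega^n$. For the remaining case $F = \mathrm{Hom}_G(S,-)$, I would argue that $\dim \mathrm{Hom}_G(S, \Omega^n M)$ is again eventually quasipolynomial. One route: for $n \ge 1$ one has $\dim \mathrm{Hom}_G(S, \Omega^{-n} M) = \dim \mathrm{Ext}^n_G(S,M)$, whose Hilbert series is rational with root-of-unity poles (the Evens–Venkov finite generation input already used in the proof of \Cref{pr.lsoc-poly}), so \cite[Proposition 4.4.1]{enum1} gives eventual quasipolynomiality directly; dually one handles $\Omega^n M$. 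A second, more uniform route is to note $\mathrm{Hom}_G(S, \Omega^n M)$ injects into (in fact is computed by) the maps out of $S$ into $\mathrm{soc}\,\Omega^n M$, so $\dim \mathrm{Hom}_G(S,\Omega^n M) = \ell_S(\mathrm{soc}\,\Omega^n M)\cdot \dim\mathrm{End}_G(S)$ — wait, more carefully, $\mathrm{Hom}_G(S, N)\cong \mathrm{Hom}_G(S,\mathrm{soc}\,N)$ and the latter is a sum over simples $S'$ of $\mathrm{Hom}_G(S,S')^{\oplus \ell_{S'}(\mathrm{soc}\,N)}$, so its dimension is $\ell_S(\mathrm{soc}\,N)\dim\mathrm{End}_G(S)$ — and then \Cref{pr.lsoc-poly} applies. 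Either way one lands on an eventually quasipolynomial sequence.

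The only genuine subtlety — and thus the step I would be most careful about — is the passage from "eventually quasipolynomial" to "eventually linearly recursive," specifically ensuring the conclusion is stated in the "eventually" sense of \Cref{def:evrec} rather than the strict sense of \Cref{eq:rectemplate}: a quasipolynomial sequence that is eventually zero, or whose leading behavior degenerates, need not satisfy a recurrence with non-zero-divisor trailing coefficient for *all* $n$, which is exactly why the word "eventually" is in the statement. Beyond that bookkeeping point, there is no real obstacle; the heavy lifting (finite generation of cohomology, Hilbert–Samuel theory, the exact-sequence splicing for $\Omega^{-n}$) has all been done in \Cref{pr.lsoc-poly} and \Cref{th.f-per}, and this corollary is purely a translation of their conclusions through \Cref{th:st-rec}.
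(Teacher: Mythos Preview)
Your proposal is correct and follows essentially the same approach as the paper: invoke \Cref{pr.lsoc-poly} (for the $\mathrm{Hom}_G(S,-)$ case, via your second route through the socle) and \Cref{th.f-per} (for exact $F$) to get eventual quasipolynomiality, then observe that eventually quasipolynomial sequences are eventually linearly recursive. The paper's proof is a terse two-liner stating exactly this; your version simply unpacks the details more carefully.
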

\begin{proof}
  This follows from \Cref{pr.lsoc-poly,th.f-per} and the fact that eventually quasipolynomial sequences are eventually linearly recursive.
\end{proof}

Next, note that for every Laurent polynomial $P\in \bN[x^{\pm 1}]$ we can talk about the functor $P(\Omega)$ (written $P\Omega$ for brevity), with addition being interpreted as direct sum. We have the following amplification of \Cref{cor.rec}.

\begin{theorem}\label{th:po}
  For $F$ and $M$ as in \Cref{cor.rec} and an eventually linearly recursive sequence of polynomials
  \begin{equation*}
    \cP=(P_n)_n\subset \bN[x]
  \end{equation*}
  the sequences
  \begin{equation*}
    n\mapsto \dim F(P_n\Omega M)
  \end{equation*}
  and
  \begin{equation*}
    n\mapsto \dim F(P_n\Omega^{-1}M) 
  \end{equation*}
  are eventually linearly recursive. 
\end{theorem}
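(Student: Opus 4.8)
The plan is to reduce the statement to the combination of two results already in hand: \Cref{cor.rec}, which handles the "base" sequences of (co)syzygy sizes, and \Cref{pr:qpn} (recursive well-adjustedness), which lets us feed such a sequence through an eventually linearly recursive sequence of polynomials via the convolution $\triangleright$ of \Cref{def:op}.

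Concretely, I would write $P_n(x)=\sum_k c_{n,k}x^k$ with $c_{n,k}\in\bN$. Since $P_n\Omega M$ denotes the direct sum $\bigoplus_k(\Omega^k M)^{\oplus c_{n,k}}$ and $F$ is additive (being either exact or of the form $\mathrm{Hom}_G(S,-)$), we have
\begin{equation*}
  \dim F(P_n\Omega M)=\sum_k c_{n,k}\dim F(\Omega^k M)=P_n\triangleright{\bf a},
\end{equation*}
where ${\bf a}=(a_k)_k$ is the sequence $a_k:=\dim F(\Omega^k M)$. Interpreting $P_n\Omega^{-1}M$ as the substitution $x\mapsto\Omega^{-1}$ (so $x^k\mapsto\Omega^{-k}$), the same computation gives $\dim F(P_n\Omega^{-1}M)=P_n\triangleright{\bf b}$ with $b_k:=\dim F(\Omega^{-k}M)$. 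Thus the two target sequences are exactly $\cP\triangleright{\bf a}$ and $\cP\triangleright{\bf b}$.

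Now \Cref{cor.rec} tells us that both ${\bf a}$ and ${\bf b}$ are eventually linearly recursive. Since $\cP=(P_n)_n$ is an eventually linearly recursive polynomial sequence by hypothesis, \Cref{pr:qpn} says it is recursively well-adjusted, hence recursively compatible with every eventually linearly recursive sequence; applying this to ${\bf a}$ and to ${\bf b}$ shows that $\cP\triangleright{\bf a}=(\dim F(P_n\Omega M))_n$ and $\cP\triangleright{\bf b}=(\dim F(P_n\Omega^{-1}M))_n$ are eventually linearly recursive, as claimed.

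There is no serious obstacle: the two ingredients were set up for precisely this purpose. The only points requiring care are bookkeeping ones — that $F$ distributes over the direct-sum decomposition defining $P_n\Omega^{\pm 1}M$, and that the convolution $\triangleright$ of \Cref{def:op} is exactly the operation recovering $\dim F(P_n\Omega^{\pm1}M)$ from the base sequences of (co)syzygy dimensions — together with noting that passing between $\Omega^n$ and $\Omega^{-n}$ costs nothing since \Cref{cor.rec} already covers both.
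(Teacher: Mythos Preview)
Your proof is correct and follows essentially the same approach as the paper: identify $\dim F(P_n\Omega^{\pm 1}M)$ as $\cP\triangleright{\bf a}$ (resp.\ $\cP\triangleright{\bf b}$) for the base sequences $a_k=\dim F(\Omega^k M)$ and $b_k=\dim F(\Omega^{-k}M)$, invoke \Cref{cor.rec} to see these are eventually linearly recursive, and then apply \Cref{pr:qpn}. Your write-up is somewhat more explicit about the additivity of $F$ and the unpacking of $P_n\Omega^{\pm 1}M$ as a direct sum, but the argument is the same.
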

\begin{proof}
  To fix ideas, we prove the version about $\Omega$. Denoting
  \begin{equation*}
    a_n=\dim F(\Omega^n M)\text{ and } b_n=\dim F(P_n\Omega M)
  \end{equation*}
  we have
  \begin{equation*}
    {\bf b}:=(b_n)_n = \cP\triangleright {\bf a} \text{ for } {\bf a}:=(a_n)_n
  \end{equation*}
  with `$\triangleright$' as in \Cref{def:op}. The conclusion thus follows from \Cref{pr:qpn}.
\end{proof}

On the other hand, for {\it Laurent} (as supposed to ordinary) polynomials we have the following version.

\begin{theorem}\label{th:lrnt}
  For $F$ and $M$ as in \Cref{cor.rec} and an eventually linearly recursive sequence of Laurent polynomials
  \begin{equation*}
    \cP=(P_n)_n\subset \bN[x^{\pm 1}]
  \end{equation*}
  the sequence
  \begin{equation}\label{eq:pno}
    n\mapsto \dim F(P_n\Omega M)
  \end{equation}
  is algebraic.
\end{theorem}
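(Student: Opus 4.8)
The plan is to reduce this to \Cref{th:pab} in the same way the proof of \Cref{th:po} reduced the ordinary-polynomial case to \Cref{pr:qpn}. First I would fix attention on $\Omega$ rather than $\Omega^{-1}$ (the two are interchangeable by duality), and set
\begin{equation*}
  a_n = \dim F(\Omega^n M)\quad\text{and}\quad b_n = \dim F(\Omega^{-n-1} M)\quad (n\ge 0).
\end{equation*}
By \Cref{cor.rec} both $\mathbf{a}=(a_n)_n$ and $\mathbf{b}=(b_n)_n$ are eventually linearly recursive sequences of non-negative integers. Now observe that for a Laurent monomial $x^k$ with $k\ge 0$ the functor $x^k\Omega$ is $\Omega^k$, so $F$ of it has dimension $a_k$; while for $k<0$, $x^k\Omega$ is $\Omega^{k}$ with $k$ negative, i.e. $\Omega^{-(-k)}$, whose $F$-dimension is $b_{-k-1}$. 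Since $P_n\in\bN[x^{\pm 1}]$ and $F$ is additive on direct sums (both an exact functor and $\mathrm{Hom}_G(S,-)$ are additive), we get
\begin{equation*}
  \dim F(P_n\Omega M) = \sum_{k\ge 0} c_{n,k}\,a_k + \sum_{k<0} c_{n,k}\,b_{-k-1} = P_n\triangleright(\mathbf{a},\mathbf{b}),
\end{equation*}
where $P_n(x)=\sum_k c_{n,k}x^k$ and $\triangleright$ is the operation of \Cref{def:laurtr}.

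Thus the sequence \Cref{eq:pno} is exactly $\cP\triangleright(\mathbf{a},\mathbf{b})$ for the eventually linearly recursive Laurent-polynomial sequence $\cP=(P_n)_n$ and the eventually linearly recursive sequences $\mathbf{a}$, $\mathbf{b}$. By \Cref{th:pab} this sequence is algebraic, which is the claim. For the $\Omega^{-1}$ variant one runs the same argument with the roles of $\mathbf{a}$ and $\mathbf{b}$ swapped (equivalently, apply the already-proved statement to the dual module $M^*$ and note $\Omega^{\pm 1}$ and $F$ behave compatibly with duality, or just that $F(\Omega^{-n-1}M)$ and $F(\Omega^n M)$ switch roles).

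There is essentially no obstacle here beyond bookkeeping: the only point that needs a moment's care is the index shift in matching the negative powers of $x$ to the $b$-sequence, i.e. that the constant term $x^{-1}$ of a Laurent polynomial corresponds to $b_0 = \dim F(\Omega^{-1}M) = \dim F(\mathrm{core}(M))$, exactly as encoded in \Cref{def:laurtr}. One should also note that $F$ being additive suffices for the passage from the formal sum $P_n\Omega M$ (a direct sum of shifted copies of $M$) to the numerical sum defining $\triangleright$; this is why the hypothesis on $F$ in \Cref{cor.rec} — exact, or $\mathrm{Hom}_G(S,-)$ — is all that is used. The heavy lifting (diagonals of rational $2$-sequences being algebraic, via \cite[Theorem 6.3.3]{enum2}) has already been done inside \Cref{th:pab}, so the proof is short.
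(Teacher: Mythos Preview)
Your argument is correct and is essentially the paper's own proof: set ${\bf a}=(\dim F(\Omega^n M))_n$ and ${\bf b}=(\dim F(\Omega^{-n}M))_n$ (the paper leaves the index shift implicit with the phrase ``up to irrelevant shifts''), observe that \Cref{eq:pno} is $\cP\triangleright({\bf a},{\bf b})$, and invoke \Cref{th:pab}. Your version is in fact more explicit about matching the indexing to \Cref{def:laurtr}. One small slip in your aside: $\Omega^{-1}M$ is not $\mathrm{core}(M)$ (that would be $\Omega^0 M$), and $x^{-1}$ is not a ``constant term''; neither affects the argument.
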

\begin{proof}
  We can proceed as in the proof of \Cref{th:po}, this time using \Cref{th:pab} and noting that the sequence \Cref{eq:pno} is (essentially, up to irrelevant shifts) $\cP\triangleright({\bf a},{\bf b})$ for
  \begin{equation*}
    {\bf a}=(\dim F(\Omega^n M))_n\quad\text{and}\quad {\bf b}=(\dim F(\Omega^{-n} M))_n. 
  \end{equation*}
\end{proof}


\section{Invariant sequences for Omega-algebraic modules}\label{se.omega}

Recall the definitions of $\Omega$ and $\Omega^{\pm}$-algebraic modules from the Introduction (\Cref{def:omega} and \Cref{def:omega+}). We introduce some notation:
\begin{itemize}
\item Let $M$ be a finite-dimensional $\Omega$-algebraic $kG$-module.
\item Let $N_1,...,N_s$ be the $\Omega$-orbit representatives of the various non-projective indecomposable summands that appear in $M^{\otimes n}$, including $N_1=k$ for $k=M^{\otimes 0}$. 
\item Let $T=(t_{ij})$ be the $k \times k$ matrix whose rows give the effect of tensoring with $M$. So,
  \begin{equation}\label{eq:tmat}
    core_G(M \otimes N_i)=\bigoplus_{j=1}^s t_{ij}(N_j)
  \end{equation}
where $t_{ij}$'s are Laurent polynomials in $\Omega$.
\end{itemize}

\begin{proposition} \label{th.indec}
  If $M$ is an {\it Omega-algebraic} non-projective indecomposable $G$-module, then the sequence $c_n^G(M)$ is the sum of the dimensions of the entries of the first row of the matrix $T^n$, i.e.
  \begin{equation*}
    c_n^G(M)=\sum_{j=1}^s \operatorname{dim} t^{(n)}_{1j}(N_j)
  \end{equation*}
where $T^n=(t^{(n)}_{ij})\in M_s(\bN[\Omega^{\pm 1}])$. 
\end{proposition}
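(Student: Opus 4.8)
The statement is essentially a bookkeeping identity: tensoring with $M$ is encoded by the matrix $T$ over $\bN[\Omega^{\pm 1}]$ via \Cref{eq:tmat}, so tensoring $n$ times should be encoded by $T^n$, and then $c_n^G(M)$ is read off as the total dimension of the first row (since $N_1 = k = M^{\otimes 0}$, and $M^{\otimes n} = M^{\otimes n}\otimes N_1$). I would prove this by induction on $n$, the base case $n=0$ being $c_0^G(M) = \dim k = 1$ with $T^0 = \mathrm{id}$, so the first row of $T^0$ has a single nonzero entry $\Omega^0$ sitting in the $N_1 = k$ slot, of dimension $1$.

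\textbf{Key steps.} First I would record the fundamental compatibility: the assignment $N \mapsto$ (its class in the free $\bN[\Omega^{\pm 1}]$-module on $N_1,\dots,N_s$) is additive over direct sums and intertwines $\Omega$ with multiplication by the variable $\Omega$, because $\Omega$ permutes indecomposable non-projectives and commutes with direct sum, and because $core_G(-)$ just discards projective summands (which $\Omega^{\pm 1}$ and $\otimes$ handle consistently in $\mathrm{stmod}~kG$). In particular, tensoring with $M$ followed by taking the core is the $\bN[\Omega^{\pm1}]$-linear map given by $T$ acting on row vectors: if $core_G(M^{\otimes n}) = \bigoplus_j t^{(n)}_{1j}(\Omega) N_j$, then
\begin{align*}
  core_G\!\left(M^{\otimes (n+1)}\right)
  &= core_G\!\left(M \otimes core_G(M^{\otimes n})\right)\\
  &= core_G\!\left(M \otimes \bigoplus_j t^{(n)}_{1j}(\Omega) N_j\right)\\
  &= \bigoplus_j t^{(n)}_{1j}(\Omega)\, core_G(M\otimes N_j)\\
  &= \bigoplus_j t^{(n)}_{1j}(\Omega) \bigoplus_k t_{jk}(\Omega) N_k
  = \bigoplus_k \Bigl(\sum_j t^{(n)}_{1j} t_{jk}\Bigr)(\Omega)\, N_k,
\end{align*}
and the coefficient $\sum_j t^{(n)}_{1j} t_{jk}$ is exactly the $(1,k)$ entry of $T^{n+1}$. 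Here I should be slightly careful: the first step $core_G(M^{\otimes(n+1)}) = core_G(M\otimes core_G(M^{\otimes n}))$ uses that tensoring a projective module with anything is projective, so $M\otimes(\text{projective part of }M^{\otimes n})$ contributes nothing to the core; and the factoring of $core_G$ through the direct sum uses the same fact together with $core_G(\Omega^d N) = \Omega^d N$ for $N$ a non-projective indecomposable (taking cores a second time changes nothing). Finally, taking dimensions and using that $\dim$ is additive gives $c_{n+1}^G(M) = \sum_k \dim t^{(n+1)}_{1k}(N_k)$, completing the induction.

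\textbf{Main obstacle.} The only genuinely delicate point is the bookkeeping around $\Omega$ versus $core_G$: a priori $\Omega$ is only a functor on $\mathrm{stmod}~kG$, and the identity $core_G(M\otimes N) = \bigoplus t_{ij}(\Omega)N_j$ is an equality of honest modules only up to projective summands on each side. I would handle this by working consistently with the rule that every module is replaced by its core and all equalities of "module data" are equalities of the associated elements of $\bigoplus_j \bN[\Omega^{\pm1}]N_j$ — equivalently, equalities in the stable category together with the observation that $\dim$ is invariant under adding/removing projectives only up to a correction; but in fact we never need that correction, because we always apply $\dim$ to the core, which is literally the non-projective summand, and $\Omega^d N$ for non-projective indecomposable $N$ is again a specific non-projective indecomposable module whose dimension is a well-defined integer. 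Once this convention is fixed the whole argument is the three-line induction above.
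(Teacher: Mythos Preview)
Your proposal is correct and follows essentially the same approach as the paper's proof: an induction on $n$ using that $core_G(M^{\otimes(n+1)}) = core_G(M\otimes core_G(M^{\otimes n}))$ and that $M\otimes-$ commutes with $\Omega^{\pm 1}$ in the stable category. Your write-up is in fact considerably more careful than the paper's (which is a three-line sketch), spelling out the bookkeeping around cores, projective summands, and the $\bN[\Omega^{\pm 1}]$-linearity that the paper leaves implicit or relegates to a remark.
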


\begin{proof}
Letting $N_1=k$ as mentioned before, we have 
\begin{eqnarray*}
  core_G(M)&=&\bigoplus_{j=1}^s t_{1j}(N_j) \\
  core_G(M^{\otimes 2})&=& \bigoplus_{j=1}^s core_G(t_{1j}(M \otimes N_j))\\
                     &=& \bigoplus_{j=1}^s t_{1j} \Big (\bigoplus_{l=1}^s t_{jl} (N_l) \Big ),
\end{eqnarray*}
etc. The proof follows by induction.
\end{proof}

\begin{remark}\label{re:stbl}
  \Cref{th.indec} hinges on the fact that when regarded as functors on the {\it stable module category} of $G$ (e.g. \cite[\S 2.1]{bnsn-rep1}) the functors $M\otimes -$ and $\Omega^{\pm 1}$ commute. 
\end{remark}

\begin{corollary}
  Let $M$ be an {\it Omega-algebraic} $G$-module. Let $N_1=k,\cdots ,N_s$ be the $\Omega$-orbit representatives of the various non-projective indecomposable summands that appear in $M^{\otimes n}$ with $N_1,...,N_r$ being the $\Omega$-orbit representatives of the non-projective indecomposable summands of $M$, with \begin{equation*}core_G(M)=\bigoplus_{i=1}^r q_{i}(N_i)
\end{equation*} where $q_{i}$'s are Laurent monomials in $\Omega$.  Let $T$ be the matrix that gives the effect of tensoring with $M$. Then we have,
\begin{equation*}c_n^G(M)=\sum_{i=1}^r \sum_{j=1}^s \operatorname{dim} q_ip_{ij}(N_j)
\end{equation*}
where $T^{n}=(p_{ij})$ with $p_{ij}$'s being Laurent polynomials in $\Omega$.
\end{corollary}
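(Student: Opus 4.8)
The plan is to reduce the corollary to \Cref{th.indec} by unwinding the definition of the $N_i$'s and keeping track of the Laurent monomials $q_i$ coming from $core_G(M)$. The point is that \Cref{th.indec} was stated for an \emph{indecomposable} $M$ (with $N_1 = k$ and the first row of $T^n$ recording the effect of iterated tensoring), whereas here $M$ is merely assumed $\Omega$-algebraic, so $core_G(M)$ may split as $\bigoplus_{i=1}^r q_i(N_i)$ with several indecomposable constituents, each shifted by its own Laurent monomial $q_i$ in $\Omega$.

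First I would observe that, exactly as in the proof of \Cref{th.indec}, one computes
\begin{align*}
  core_G(M^{\otimes n}) &= core_G\left(M \otimes M^{\otimes(n-1)}\right)\\
  &= core_G\left(M \otimes \bigoplus_{i=1}^r q_i\bigl(core_G(M^{\otimes(n-1)})\bigr)\right),
\end{align*}
using that $\Omega^{\pm 1}$ commutes with $M \otimes -$ on the stable category (\Cref{re:stbl}) and that tensoring with $M$ on a single indecomposable summand $N_i$ of $core_G(M)$ produces $\bigoplus_j t_{ij}(N_j)$ where $T = (t_{ij})$ is the matrix of $M \otimes -$. Iterating and collecting monomials, $core_G(M \otimes N_i) = \bigoplus_j t_{ij}(N_j)$ and hence $core_G(M^{\otimes n})$ is obtained by first applying the ``$core_G(M) = \bigoplus_i q_i(N_i)$'' step and then the matrix $T$ a total of $n-1$ further times to the indices $1,\dots,r$; in other words the relevant combination is $\sum_i q_i \cdot (\text{row } i \text{ of } T^{n-1})$. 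Reindexing (replace $n-1$ by $n$ and absorb the extra tensor factor into the bookkeeping, or equivalently note $core_G(M^{\otimes(n+1)})$ is governed by $\sum_i q_i \cdot (\text{row } i \text{ of } T^n)$), one arrives at
\begin{equation*}
  c_n^G(M) = \sum_{i=1}^r \sum_{j=1}^s \dim\, q_i p_{ij}(N_j), \qquad T^n = (p_{ij}),
\end{equation*}
which is the claimed formula. I would present this as a short induction on $n$, the base case $n=0$ (or $n=1$) being the defining equation $core_G(M) = \bigoplus_i q_i(N_i)$ and the inductive step being precisely the tensor-and-split computation above.

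The only genuine subtlety — and the step I would be most careful about — is the index bookkeeping: the summands of $M$ contribute only the first $r$ orbit representatives $N_1,\dots,N_r$, so the $q_i$ range over $i=1,\dots,r$ while the matrix $T$ is $s \times s$; one must check that applying $T$ to the $r$ ``active'' rows and then summing $\dim q_i p_{ij}(N_j)$ over all $j=1,\dots,s$ correctly accounts for all indecomposable summands that can appear at later stages (they do, since $N_1,\dots,N_s$ were chosen to exhaust the orbit representatives occurring in \emph{all} tensor powers). There is also the harmless point that $\dim q_i p_{ij}(N_j)$ means $\dim$ of the module obtained by applying the Laurent monomial/polynomial $q_i p_{ij}$ in $\Omega$ to $N_j$ — this is well-defined up to projective summands, but since we are only measuring $\dim core_G$ the projectives are discarded and no ambiguity remains. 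Everything else is a routine repetition of the argument already given for \Cref{th.indec}.
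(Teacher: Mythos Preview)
Your approach matches the paper's: decompose $core_G(M)=\bigoplus_i q_i(N_i)$, commute the $q_i$ past $M\otimes -$ via \Cref{re:stbl}, and then invoke the induction underlying \Cref{th.indec}. One write-up fix: your displayed equation is garbled (the $q_i$ are to be applied to $N_i$, not to $core_G(M^{\otimes(n-1)})$); the correct line---which your subsequent prose already describes---is $core_G(M^{\otimes(n+1)})=core_G\bigl(\bigoplus_i q_i(N_i)\otimes M^{\otimes n}\bigr)=\bigoplus_i q_i\bigoplus_j p_{ij}(N_j)$ with $T^n=(p_{ij})$.
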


\begin{proof}
Let \begin{equation*}core_G(M \otimes N_i)=\bigoplus_{j=1}^s t_{ij}(N_j)
\end{equation*}
where $t_{ij}$'s are Laurent monomials in $\Omega$.
\begin{eqnarray*}
core_G(M \otimes M)&=& core_G(M \otimes \bigoplus_{i=1}^r q_{i}(N_i))\\
&=&core_G(\bigoplus_{i=1}^r q_{i}(M \otimes N_i))\\
&=& \bigoplus_{i=1}^r q_{i} \Big (\bigoplus_{j=1}^s t_{ij} (N_j) \Big )
\end{eqnarray*}
The proof now follows from \Cref{th.indec}.
\end{proof}

\begin{theorem}\label{th:cj142-weak}
  \cite[Conjecture 14.2]{bs} holds for Omega$^+$ and Omega$^-$-algebraic modules $M$. 
\end{theorem}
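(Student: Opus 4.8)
The plan is to reduce the statement to the results about the convolution operation $\cP \triangleright {\bf a}$ already established, applied to the matrix $T$ describing $M \otimes -$. The first step is to fix the case of an $\Omega^+$-algebraic module $M$ (the $\Omega^-$ case follows by $k$-linear duality, which interchanges $\Omega$ and $\Omega^{-1}$ and preserves dimensions of cores). By \Cref{def:omega+} we may choose the orbit representatives $N_1 = k, N_2, \dots, N_s$ so that the matrix $T = (t_{ij})$ giving the effect of tensoring with $M$ has entries in $\bN[\Omega]$ (ordinary, not Laurent, polynomials). By \Cref{th.indec} — or rather the version for a general $\Omega$-algebraic (not necessarily indecomposable) $M$ in the corollary preceding the statement — we have
\begin{equation*}
  c_n^G(M) = \sum_{i,j} \dim\bigl(q_i(\Omega)\, p^{(n)}_{ij}(\Omega)\,(N_j)\bigr),
\end{equation*}
where $T^n = (p^{(n)}_{ij})$ and $core_G(M) = \bigoplus_i q_i(N_i)$ with $q_i$ monomials; since $M$ is $\Omega^+$-algebraic, all the $q_i$ and all the $p^{(n)}_{ij}$ are honest polynomials in $\Omega$ with nonnegative integer coefficients.

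The second step is to observe that $n \mapsto T^n$ is an eventually linearly recursive sequence of matrices of polynomials in $\Omega$: the matrix $T$ over the commutative ring $\bN[\Omega]$ satisfies its own characteristic polynomial (Cayley--Hamilton), so the entries $p^{(n)}_{ij}$ form eventually — indeed everywhere — linearly recursive sequences of polynomials in $\bN[\Omega] \subset \bK[x]$, with scalar (polynomial-in-$\Omega$) coefficients. Fixing the pair $(i,j)$, multiplication by the fixed monomial $q_i(\Omega)$ preserves eventual linear recursion of the polynomial sequence, so $\cP_{ij} := (q_i(\Omega) p^{(n)}_{ij}(\Omega))_n$ is an eventually linearly recursive sequence of polynomials in $\bN[x]$.

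The third step is to identify, for each fixed $j$, the sequence ${\bf a}^{(j)} := (\dim \Omega^m N_j)_m$ as an eventually linearly recursive sequence: this is \Cref{cor.dim} (with $F$ the forgetful functor and $M$ there taken to be $N_j$). Since $q_i p^{(n)}_{ij}$ has nonnegative coefficients, substituting $\dim \Omega^m N_j$ for $x^m$ is exactly the operation $\cP_{ij} \triangleright {\bf a}^{(j)}$ of \Cref{def:op}, and it computes $n \mapsto \dim\bigl(q_i(\Omega) p^{(n)}_{ij}(\Omega)(N_j)\bigr)$. By \Cref{pr:qpn}, each such sequence is eventually linearly recursive; summing over the finitely many pairs $(i,j)$ and using that eventual linear recursion is closed under finite sums (equivalently, that rational Hilbert series form a ring, \Cref{th:st-rec}) gives that $c_n^G(M)$ is eventually linearly recursive.

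I expect the main subtlety to be bookkeeping around the \emph{nonnegativity} of coefficients — the reason the $\Omega^+$ hypothesis is exactly what is needed. The operation $P \triangleright {\bf a}$ substitutes $a_k = \dim \Omega^k N_j$ for $x^k$, and this genuinely computes a dimension of a module only when $P \in \bN[x]$, since $\dim$ is additive over direct sums but there is no subtraction available at the level of modules; if one allowed Laurent polynomials (the plain $\Omega$-algebraic case), the negative powers would force us to substitute $\dim \Omega^{-k} N_j$ as well, landing us in the setting of \Cref{th:pab} and only yielding an algebraic Hilbert series rather than a rational one — which is precisely the content of the separately-stated \Cref{th:wkagain}. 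A secondary point to handle carefully is the reduction from a general $\Omega^+$-algebraic $M$ to \Cref{th.indec}: one wants the corollary preceding the statement (which already records $core_G(M) = \bigoplus q_i(N_i)$ for monomials $q_i$) rather than the indecomposable case, and one should check that the $q_i$ remain polynomials (not Laurent) under the $\Omega^+$ choice of representatives, which follows since $core_G(M)$ is a summand of $M = M^{\otimes 1}$ and the relevant column of $T$ has entries in $\bN[\Omega]$.
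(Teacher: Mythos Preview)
Your proof is correct and follows essentially the same route as the paper: express $c_n^G(M)$ via the entries of $T^n$, invoke Cayley--Hamilton over $\bN[\Omega]$ to obtain eventually linearly recursive polynomial sequences, and then apply the convolution machinery (\Cref{pr:qpn}/\Cref{cor.dim}) to conclude. The only cosmetic differences are that the paper appeals to \Cref{th.indec} directly (whose proof does not actually use indecomposability, since $N_1=k$) rather than to the subsequent corollary with the $q_i$'s, and that it cites the packaged \Cref{th:po} instead of unpacking it into \Cref{pr:qpn} and \Cref{cor.dim} as you do.
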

\begin{proof}
  The two claims are analogous, so we focus on the Omega$^+$ case.
  
  By \Cref{th.indec} and the assumption that $M$ is Omega$^+$-algebraic $c_n^G(M)$ is the sum of the dimensions of
  \begin{equation*}
    t_{1j}^{(n)} N_j,\ 1\le j\le s,
  \end{equation*}
  where $t_{1j}^{(n)}$ are the respective entries of the $n^{th}$ power $T^n$ of an $s\times s$ matrix over $\bN[\Omega]$.

  By the Cayley-Hamilton theorem (over the ring of polynomials in $\Omega$) the sequences $(t_{1j}^{(n)})_n$ are all recursive. The conclusion now follows from \Cref{th:po} applied to said sequences (with $N_j$ respectively in place of $M$).
\end{proof}

As an immediate consequence, we have

\begin{corollary}\label{cor:sufflrg}
  For any Omega-algebraic $M\in \mathrm{mod}~kG$ the sequences
  \begin{equation*}
    (c_n^G(\Omega^d M))_n\text{ and }(c_n^G(\Omega^{-d} M))_n
  \end{equation*}
  are eventually linearly recurrent for sufficiently large $d$. 
\end{corollary}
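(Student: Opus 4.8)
The plan is to deduce \Cref{cor:sufflrg} from \Cref{th:cj142-weak} by showing that, for an $\Omega$-algebraic module $M$, sufficiently large syzygy shifts $\Omega^d M$ are $\Omega^+$-algebraic (and dually, sufficiently small shifts $\Omega^{-d}M$ are $\Omega^-$-algebraic). The point is that $\Omega$-algebraicity of $M$ is unaffected by replacing $M$ with a syzygy shift — the collection of non-projective indecomposable summands of $(\Omega^dM)^{\otimes n}\cong \Omega^{dn}(M^{\otimes n})$ still falls into finitely many $\Omega$-orbits, indeed the \emph{same} orbits as for $M^{\otimes n}$ up to an overall $\Omega$-power. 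So the orbit representatives $N_1=k,N_2,\dots,N_s$ can be kept fixed, and only the matrix $T$ recording the action of $\Omega^dM\otimes-$ changes.

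Concretely, I would first record how $T$ transforms under the shift. If $T=(t_{ij})\in M_s(\bZ[\Omega^{\pm1}])$ is the matrix for $M\otimes-$ relative to the chosen orbit representatives, as in \Cref{eq:tmat}, then tensoring with $\Omega^dM$ has the effect of $\Omega^d(M\otimes-)$ on the stable category (using that $\Omega$ commutes with $M\otimes-$, as in \Cref{re:stbl}), so the matrix for $\Omega^dM\otimes-$ is simply $\Omega^d T = (\Omega^d t_{ij})$. Each entry $t_{ij}$ is a fixed Laurent polynomial in $\Omega$ with, say, lowest exponent $\ge -D$ for some $D$ depending only on $M$ and the chosen representatives. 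Hence for every $d\ge D$ the entries $\Omega^d t_{ij}$ lie in $\bN[\Omega]$ (the nonnegativity of the coefficients is automatic, since these count multiplicities), which is precisely the condition for $\Omega^dM$ to be $\Omega^+$-algebraic in the sense of \Cref{def:omega+}. The case of $\Omega^{-d}M$ is entirely symmetric: writing $D'$ for a bound on the highest $\Omega$-exponent appearing among the $t_{ij}$, for $d\ge D'$ the matrix $\Omega^{-d}T$ has entries in $\bN[\Omega^{-1}]$, so $\Omega^{-d}M$ is $\Omega^-$-algebraic.

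Having established this, the corollary is immediate: by \Cref{th:cj142-weak} the sequence $c_n^G(N)$ is eventually linearly recursive whenever $N$ is $\Omega^+$- or $\Omega^-$-algebraic, and we have just shown $\Omega^dM$ is $\Omega^+$-algebraic for $d\ge D$ and $\Omega^{-d}M$ is $\Omega^-$-algebraic for $d\ge D'$. I would also note, to be careful, that one should check the orbit representatives can indeed be reused unchanged — i.e. that no new non-projective indecomposable summand appears for $\Omega^dM$ that did not already appear (up to $\Omega$-shift) for $M$; but this is clear since $(\Omega^dM)^{\otimes n}$ and $M^{\otimes n}$ differ by the single shift $\Omega^{dn}$ and the set of $\Omega$-orbits is shift-invariant.

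The main (minor) obstacle is bookkeeping around whether $N_1=k$ can remain an orbit representative after the shift: tensoring with $\Omega^dM$ does not directly produce a copy of $k=M^{\otimes 0}$, but $(\Omega^dM)^{\otimes 0}=k$ still by convention, so the indexing in \Cref{th.indec} goes through verbatim with $T$ replaced by $\Omega^dT$. Beyond that, everything is a direct unwinding of definitions plus an application of the already-proved \Cref{th:cj142-weak}, so no genuinely hard step remains.
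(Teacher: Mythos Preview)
Your proposal is correct and follows essentially the same approach as the paper: the paper's proof is the single sentence ``for sufficiently large $d$, $\Omega^d M$ is Omega$^+$-algebraic while $\Omega^{-d} M$ is Omega$^-$-algebraic; the conclusion follows from \Cref{th:cj142-weak},'' and your argument is exactly the detailed justification of that assertion via the observation that the transition matrix for $\Omega^d M\otimes -$ is $\Omega^d T$.
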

\begin{proof}
  Indeed, for sufficiently large $d$ $\Omega^d M$ is Omega$^+$-algebraic while $\Omega^{-d} M$ is Omega$^-$-algebraic. The conclusion follows from \Cref{th:cj142-weak}.
\end{proof}

As to the $\Omega$-algebraic analogue of \Cref{th:cj142-weak}:

\begin{theorem}\label{th:wkagain}
  For an $\Omega$-algebraic $M$ the sequence $(c_n^G(M))_n$ is algebraic.
\end{theorem}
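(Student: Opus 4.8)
The plan is to mimic the proof of \Cref{th:cj142-weak}, but since $M$ is only $\Omega$-algebraic (not $\Omega^{\pm}$-algebraic) the entries of the matrix $T$ are genuine \emph{Laurent} polynomials in $\bN[\Omega^{\pm 1}]$ rather than polynomials in $\bN[\Omega]$, so we must substitute the Laurent-polynomial machinery (\Cref{th:pab} / \Cref{th:lrnt}) for the ordinary-polynomial one (\Cref{pr:qpn} / \Cref{th:po}). Concretely, by \Cref{th.indec} we have $c_n^G(M)=\sum_{j=1}^s \dim t_{1j}^{(n)}(N_j)$ where $T^n=(t_{ij}^{(n)})$ and $T=(t_{ij})\in M_s(\bN[\Omega^{\pm 1}])$; note that the proof of \Cref{th.indec} was stated for indecomposable $M$, so a first small step is to record (via the Corollary following \Cref{th.indec}, or by the same induction) that the analogous formula, $c_n^G(M)=\sum_{i,j}\dim q_i p_{ij}^{(n)}(N_j)$ with $T^n=(p_{ij}^{(n)})$, holds for general $\Omega$-algebraic $M$; this does not affect the structure of the argument.

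Next I would invoke the Cayley--Hamilton theorem over the Laurent polynomial ring $\bZ[\Omega^{\pm 1}]$ (or better, over $\bZ[\Omega]$ after clearing denominators): each entry sequence $(t_{1j}^{(n)})_n$ (equivalently $(p_{ij}^{(n)})_n$) satisfies a linear recurrence whose coefficients are the coefficients of the characteristic polynomial of $T$, hence lie in $\bZ[\Omega^{\pm 1}]$. Thus $\cP_j:=(t_{1j}^{(n)})_n$ is an eventually linearly recursive sequence of \emph{Laurent} polynomials in $\Omega$ for each fixed $j$. Now I would apply \Cref{th:lrnt} (or directly \Cref{th:pab}) with the functor $F$ taken to be the forgetful functor $\dim$ and with $N_j$ in place of $M$: the sequence $n\mapsto \dim F\big(t_{1j}^{(n)}\,\Omega\,N_j\big)$ is algebraic. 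A mild bookkeeping point is that the powers $t_{1j}^{(n)}$ multiply $N_j$ directly, not $\Omega N_j$; but shifting the exponents of the Laurent polynomials $t_{1j}^{(n)}$ uniformly by $1$ (i.e. replacing $t_{1j}^{(n)}(\Omega)$ by $\Omega^{-1}t_{1j}^{(n)}(\Omega)$) reduces to exactly the form handled by \Cref{th:lrnt}, and in any case \Cref{th:pab} directly gives that $\cP_j\triangleright(\mathbf{a}_j,\mathbf{b}_j)$ is algebraic, where $\mathbf{a}_j=(\dim \Omega^m N_j)_m$ and $\mathbf{b}_j=(\dim \Omega^{-m} N_j)_m$ are eventually linearly recursive by \Cref{cor.dim} (equivalently \Cref{cor.rec}).

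Finally, the class of sequences with algebraic Hilbert series is closed under finite sums (the Hilbert series add, and algebraic power series form a ring over $\bK(t)$, or one simply notes a sum of algebraic series is algebraic), so
\begin{equation*}
  c_n^G(M)=\sum_{j=1}^{s}\dim t_{1j}^{(n)}(N_j)
\end{equation*}
is algebraic, which is the claim. I do not expect any single step to be a serious obstacle given the preparatory results already in hand; the only point that requires a little care is the translation between ``$\dim$ of a Laurent polynomial in $\Omega$ applied to $N_j$'' and the operation $\cP\triangleright(\mathbf a,\mathbf b)$ of \Cref{def:laurtr} — one must check that $\dim$ is additive on direct sums (clear) and that the coefficient $c_k$ of $\Omega^k$ in $t_{1j}^{(n)}$ contributes $c_k\dim\Omega^k N_j$ for $k\ge 0$ and $c_k\dim\Omega^{k}N_j=c_{-(m+1)}\dim\Omega^{-(m+1)}N_j$ for $k=-(m+1)<0$, so that the target sequence is literally $\sum_j \cP_j\triangleright(\mathbf a_j,\mathbf b_j)$ up to finitely many terms; then \Cref{th:pab} finishes it. (One should also observe that $\dim\Omega^k N$ for negative $k$ means $\dim\Omega^{-|k|}N$, i.e. the cosyzygy dimension, which is exactly the sequence $\mathbf b$ above and is eventually linearly recursive by \Cref{cor.rec}.)
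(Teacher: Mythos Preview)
Your proposal is correct and follows essentially the same approach as the paper: mimic the proof of \Cref{th:cj142-weak}, invoke Cayley--Hamilton over $\bZ[\Omega^{\pm 1}]$ to get that each entry sequence $(t_{1j}^{(n)})_n$ is eventually linearly recursive in Laurent polynomials, and then apply \Cref{th:lrnt} (equivalently \Cref{th:pab}) in place of \Cref{th:po}. Your ``bookkeeping point'' about $P_n\Omega$ versus applying $P_n$ directly to $N_j$ is a slight misreading of the paper's notation---$P\Omega$ is shorthand for $P(\Omega)$, so $P_n\Omega M$ already means $P_n(\Omega)M$ with no extra shift---but this is harmless since you resolve it correctly regardless.
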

\begin{proof}
  As in the proof of \Cref{th:cj142-weak}, except now the polynomials are Laurent and we use \Cref{th:lrnt} in place of \Cref{th:po}.
\end{proof}

On the other hand, the number $s_n^G(M)$ of indecomposable summands of $core_G(M^{\otimes n})$ is better behaved:

\begin{theorem}\label{th:snbetter}
  For an $\Omega$-algebraic $M$ the sequence $(s_n^G(M))_n$ is eventually linearly recursive.
\end{theorem}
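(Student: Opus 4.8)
The plan is to follow the template established by \Cref{th:cj142-weak} and \Cref{th:wkagain}, but to exploit the crucial simplification that counting indecomposable summands is insensitive to the application of $\Omega^{\pm 1}$: since $\Omega$ and $\Omega^{-1}$ permute the non-projective indecomposables, the number of indecomposable summands of $q(N_j)$ for any Laurent \emph{monomial} $q$ in $\bN[\Omega^{\pm 1}]$ is just the coefficient of that monomial. More precisely, writing $T=(t_{ij})\in M_s(\bN[\Omega^{\pm 1}])$ for the matrix of $M\otimes-$ on the $\Omega$-orbit representatives $N_1=k,\dots,N_s$ as in \Cref{th.indec}, the proof of that proposition shows $core_G(M^{\otimes n})=\bigoplus_{j=1}^s t^{(n)}_{1j}(N_j)$ with $T^n=(t^{(n)}_{ij})$. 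Hence
\begin{equation*}
  s_n^G(M)=\sum_{j=1}^s \varepsilon\left(t^{(n)}_{1j}\right),
\end{equation*}
where $\varepsilon\colon\bN[\Omega^{\pm 1}]\to\bN$ is the ring-to-group "evaluation at $\Omega=1$" map sending a Laurent polynomial to the sum of its coefficients (equivalently, the number of indecomposable summands, counted with multiplicity, of the module it represents). The point is that $\varepsilon$ is a \emph{ring homomorphism} $\bZ[\Omega^{\pm 1}]\to\bZ$.

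First I would observe that, because $\varepsilon$ is a ring homomorphism, applying it entrywise to the matrix identity $T^{n+1}=T\cdot T^n$ gives $\varepsilon(T)^{n+1}=\varepsilon(T)\cdot\varepsilon(T^n)$; that is, the integer matrix $U:=\varepsilon(T)\in M_s(\bN)$ satisfies $\varepsilon(T^n)=U^n$ entrywise. Therefore
\begin{equation*}
  s_n^G(M)=\sum_{j=1}^s \left(U^n\right)_{1j},
\end{equation*}
the sum of the entries in the first row of the $n^{th}$ power of a fixed integer matrix $U$. Each sequence $n\mapsto (U^n)_{1j}$ is linearly recursive — it is annihilated by the characteristic polynomial of $U$ via Cayley--Hamilton — and a finite sum of linearly recursive sequences is linearly recursive (the relevant closure property is exactly condition \Cref{item:4} of \Cref{th:st-rec}, the span of shifts being finite-dimensional). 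In fact no "eventually" is needed here since the recursion $U^{n+s}=c_{s-1}U^{n+s-1}+\cdots+c_0U^n$ holds for all $n\ge 0$; but stating it as eventually linearly recursive is safe and matches the theorem statement.

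A couple of bookkeeping points need attention. One must be careful that $s_n^G(M)$ counts indecomposable summands of the \emph{core}, i.e. of $core_G(M^{\otimes n})$, and that the decomposition $core_G(M^{\otimes n})=\bigoplus_j t^{(n)}_{1j}(N_j)$ genuinely exhibits all of them without hidden cancellation: this is precisely the content of the inductive argument in the proof of \Cref{th.indec}, where at each step $core_G(-)$ is applied and projective summands are discarded, so the $t^{(n)}_{ij}$ already live in $\bN[\Omega^{\pm 1}]$ (nonnegative coefficients), and distinct monomials $\Omega^a N_j$ are pairwise non-isomorphic indecomposables for distinct $(a,j)$ by the orbit-representative choice. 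Consequently $\varepsilon(t^{(n)}_{1j})$ really is the number of indecomposable summands contributed by the $j^{th}$ orbit, and summing over $j$ gives $s_n^G(M)$. I expect the main (very mild) obstacle to be this verification that the homomorphism $\varepsilon$ correctly computes the summand count — that $\Omega$ acting on a non-projective indecomposable yields a non-projective indecomposable, so no coefficient "collapses" and the count is faithfully read off the coefficient sum — rather than anything in the recursion argument itself, which is elementary linear algebra over $\bZ$ once the reduction to the integer matrix $U=\varepsilon(T)$ is in place.
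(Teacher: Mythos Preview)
Your proof is correct and takes essentially the same approach as the paper's: both substitute $\Omega=1$ in the matrix $T$ to obtain an integer matrix whose powers encode $s_n^G(M)$, then invoke Cayley--Hamilton. Your version is more explicit about the ring homomorphism $\varepsilon$ and the bookkeeping that $\Omega$ preserves indecomposability, but the underlying idea is identical.
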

\begin{proof}
  Since $\Omega$ preserves (in)decomposability, the effect on $s_n$ of tensoring by $M$ is given as in \Cref{eq:tmat}, upon substituting $1$ for $\Omega$ in the matrix entries $t_{ij}$ and also $1$ for each indecomposable $N_j$. In other words, $s_n^G(M)$ can be recovered as the sum of the entries of $A^n$ for a scalar matrix $A$; clearly, this is a recursive sequence.
\end{proof}

\begin{remark} It follows from \Cref{th:snbetter} and \cite[Theorem 13.2]{bs} that for an $\Omega$-algebraic module $M$, the invariant $\gamma_G(M)$, as defined in \cite[Definition 1.1]{bs}, will always be an algebraic integer.
\end{remark}

\section{Examples} \label{se.examples}

Recall that $c_n^G(M)$ is the sequence of dimensions of the core of $M^{\otimes n}$ whereas $s_n^G(M)$ is the sequence of the number of indecomposable summands of the core of $M^{\otimes n}$. In this section, we will see some examples to demonstrate that this sequence is eventually polynomial or recurrent.

\begin{enumerate}[(1)]
\item Let $G$ be the cyclic group of order 7, $k$ be a field of characteristic 7 and $M$ be the indecomposable $kG$-module of dimension 2.\\
Then, the sequence  \begin{equation*}c_n^G(M)=<2,4,8,16,32,57,114,193,386,639,1278,2094,6829,...>,
\end{equation*}
and
\begin{equation*}s_n^G(M)=<1,2,3,6,10,19,33,61,108,197,352,638,1145,2069,...>.
\end{equation*}
The sequences above satisfy the relation \begin{equation*}x_n=5x_{n-2}-6x_{n-4}+x_{n-6}.
\end{equation*}

\item Let $G=\mathcal{S}_{10}$, $k$ be a field of characteristic 5 and $M$ be the permutation module of the symmetric group $\mathcal{S}_{10}$ labelled by the partition $\lambda=(9,1)$.\\
Then, the sequence
\begin{equation*}s_n^G(M)=<1,4,19,94,469,2344,...>,
\end{equation*}
which satisfies the relation \begin{equation*}x_n=x_{n-1}+25x_{n-2}-25x_{n-3}.
\end{equation*}

\item Let $G=\mathcal{S}_9$, $k$ be a field of characteristic 3. Young modules are indecomposable summands of the permutation modules of the symmetric group and are also labelled by partitions of $n$ as shown in \cite{KErdmannYoung}. Let $M=Y^\lambda$, the Young module corresponding to the partition $\lambda=(7,2)$.\\
Then, the sequence
\begin{equation*}s_n^G(M)=<1,4,35,310,2789,25096,...>,
\end{equation*}
which satisfies the relation \begin{equation*}x_n=9x_{n-1}+x_{n-2}-9x_{n-3}.
\end{equation*}


\item
  Let $G=\langle g, h \rangle \cong \bZ/3 \times \bZ/3$ and $k=\bF_3$. Let $M$ be the six-dimensional module given by the following matrices:
\begin{equation*}g \mapsto \begin{pmatrix}
1 & 0 & 1 & 0 & 0 & 0 \cr
0 & 1 & 0 & 0 & 0 & 0 \cr
0 & 0 & 1 & 0 & 1 & 0 \cr
0 & 0 & 0 & 1 & 0 & 1 \cr
0 & 0 & 0 & 0 & 1 & 0 \cr
0 & 0 & 0 & 0 & 0 & 1
\end{pmatrix}
\hspace{1cm}
h \mapsto \begin{pmatrix}
1 & 0 & 0 & 1 & 0 & 0 \cr
0 & 1 & 0 & 0 & 1 & 0 \cr
0 & 0 & 1 & 0 & 0 & 1 \cr
0 & 0 & 0 & 1 & 0 & 0 \cr
0 & 0 & 0 & 0 & 1 & 0 \cr
0 & 0 & 0 & 0 & 0 & 1
\end{pmatrix}
\end{equation*}
This is precisely \cite[Example 15.1]{bs}, on which we now elaborate. First, as noted in loc.cit., $M$ is Omega-algebraic: if $N=k_{\langle g\rangle}\uparrow^G$, then
\begin{align*}
  core_G(M \otimes M) &\cong \Omega(M) \oplus \Omega^{-1}(M^*) \oplus N\\
  core_G(M \otimes M^*) &\cong \Omega^{-1}(M) \oplus \Omega(M^*) \oplus N\numberthis\label{eq:motimes}\\
  core_G(M \otimes N) &\cong 3\Omega(N). 
\end{align*}


\begin{remark}
  It is also mentioned in \cite[Example 15.1]{bs} that $M$ is not algebraic. Indeed, it can be shown that in Craven's taxonomy of 6-dimensional indecomposable $G$-modules, it belongs to class P in the table from \cite[\S 3.3.5]{cr-phd}. Indeed, $M$
  \begin{itemize}
  \item has socle layers of dimensions 2,2,2, as can easily be seen either directly or from the diagram displayed next to the two matrices in \cite[Example 15.1]{bs};    
  \item has dual with socle layers of dimensions 2,3,1, as is again easily seen from the fact that in passing from $M$ to $M^*$ one can simply transpose the matrices corresponding to the generators $g$ and $h$.
  \end{itemize}
  Jointly, these remarks eliminate all possibilities in \cite[table, \S 3.3.5]{cr-phd} except for classes P and I$^*$. The only distinction noted in loc.cit. between the two is the cardinality of the set of conjugates under the action of the automorphism group $\mathrm{Aut}~G$: 4 for P and 8 for I$^*$. Now, $\mathrm{Aut}~G$ has order 48, so it will be enough to check whether the isotropy group of (the isomorphism class of) $M$ contains a subgroup of order 4: if it does the class must be P, and it will be I$^*$ otherwise.

  To conclude, simply note that the Klein 4-group generated by the automorphisms that square one of the two generators and fix the other one fixes $M$: conjugation by $\mathrm{diag}(2,1,2,1,2,1)$ maps  
  \begin{equation*}
    g\mapsto g\quad\text{and}\quad h\mapsto h^2,
  \end{equation*}
  whereas conjugation by
  \begin{equation*}
    \begin{pmatrix}
      1&0&0&0&0&0\\
      0&1&0&0&0&0\\
      0&0&2&0&1&0\\
      0&0&0&1&0&0\\
      0&0&0&0&1&0\\
      0&0&0&0&0&2
    \end{pmatrix}
  \end{equation*}
  gives the other automorphism
  \begin{equation*}
    g\mapsto g^2\quad\text{and}\quad h\mapsto h.
  \end{equation*}
  To reiterate, this means that the isotropy group of $M$ in $\mathrm{Aut}~G$ has order divisible by 4, and hence the size of the orbit must divide $\frac{48}{4}=12$. In particular that size cannot be 8, ruling out class I$^*$ from \cite[table, \S 3.3.5]{cr-phd}.
\end{remark}

Let $T$ be the $3 \times 3$ matrix whose rows give the effect of tensoring the non-projectives with $M$.
\begin{table}[ht]
\centering 
\begin{tabular}{c | c c c} 
$ $ & $M$  & $M^*$ & $N$  \\ [0.5ex] 
\hline 
$M$ & $\Omega$  & $\Omega^{-1}$ & $1$ \\ 
$M^*$ & $\Omega^{-1}$ & $\Omega$ & $1$ \\
$N$ & $0$ &  $0$ & $3\Omega$ \\ [1ex] 
\hline 
\end{tabular}
\end{table} \\
\smallskip
Hence,
\begin{equation*}T= \begin{pmatrix}
\Omega & \Omega^{-1} & 1 \cr
\Omega^{-1} & \Omega & 1 \cr
0 & 0 & 3\Omega 
\end{pmatrix}
\hspace{1cm} \text{and} \hspace{1cm}
T^n= \begin{pmatrix}
A_n & B_n & C_n \cr
B_n & A_n & C_n \cr
0 & 0 & (3\Omega )^n
\end{pmatrix}
\end{equation*}
where $A_n,B_n$ and $C_n$ are Laurent polynomials in $\Omega$ described as follows:\\
\begin{align*}
A_n &= \sum_{i=0}^{\lfloor n/2 \rfloor}{n \choose 2i} \Omega ^{(n-4i)}\numberthis \label{eq:an}\\
B_n &= \sum_{i=0}^{\lfloor n/2 \rfloor}{n \choose 2i+1} \Omega ^{(n-(4i+2))}\numberthis \label{eq:bn}\\
C_n &= \sum_{k=1}^{n}\alpha_n ^{(k)} \Omega ^{(n-(2k-1))}
\end{align*}

where
\begin{equation*}\alpha_n ^{(k)} = \sum_{i=0}^{k}(-1)^i \Bigg [{k+1 \choose i+1} + 2 {k \choose i}\Bigg ] \alpha_{n-1-i}^{(k)}
\end{equation*}
are linear recurrence relations with the initial conditions:
\begin{eqnarray*}
\alpha_t^{(k)} &=& 0, \hspace{.5cm} \text{if } \hspace{.5cm} 0\leq t<k\\
\alpha_k^{(k)} &=& 1.
\end{eqnarray*}

The characteristic equation of $T$ is 
\begin{equation*}
  x^3-5\Omega x^2-(\Omega^{-2}-7\Omega^2)x-(3\Omega^3-3\Omega^{-1})=0.
\end{equation*}
So by the Cayley-Hamilton Theorem over the ring $\mathbb{C}[\Omega, \Omega^{-1}]$, the sequences $A_n$, $B_n$ and $C_n$ satisfy the recurrence relation
\begin{equation*}
x_n=5\Omega x_{n-1}+(\Omega^{-2}-7\Omega^2)x_{n-2}+(3\Omega^3-3\Omega^{-1})x_{n-3}
\end{equation*}
for $n\geq4$.

The number $c_n^G(M)$ can now be recovered as
\begin{equation}\label{eq:anbncn}
  c_n^G(M) = \dim A_n(M) + \dim B_n (M^*) + \dim C_n (N).
\end{equation}
We do not know whether this ends up being eventually linearly recursive (as opposed to just algebraic \Cref{th:wkagain}), but we end with a few remarks on the matter.

First, note that the third summand $\dim C_n(N)$ is unproblematic here, as it is indeed linearly recursive. To see this, note that $N$ is periodic because its restriction to the maximal subgroup $\langle h\rangle\subset G$ is projective \cite[Corollary 2.24.7]{ben-mod}. Furthermore, this implies that it is periodic of period 1 or 2 \cite[Theorem 6.3]{car-str}. But then the recursion
\begin{equation*}
  C_{n+1} = (\Omega+\Omega^{-1})C_n + (3\Omega)^n
\end{equation*}
implies that we can substitute $\Omega$ for $\Omega^{-1}$ in the formula above, and can hence conclude as in the $\Omega^{+}$-algebraic case covered by \Cref{th:cj142-weak}.

The other two terms in \Cref{eq:anbncn} seem more difficult to tackle. Observe that since $M$ is not periodic, it must have complexity 2 (because we are working over a 3-group of rank 2 \cite[Theorem 2.24.4 (xv)]{ben-mod}). This implies that $\dim \Omega^n M$ and all of its analogues ($\dim \Omega^n(M^*)$, etc.) are eventually polynomials of degree 1.

It follows from the above, for instance, that the multiplicity of $M$ in $core_G(M^{\otimes n})$ cannot be eventually linearly recursive: the number of terms in \Cref{eq:an,eq:bn} that can be isomorphic to $M$ is, for dimension reasons, uniformly bounded in $n$ and concentrated around the middle of the range in either of those two sums, so the multiplicities in question are sums of binomial coefficients of the form
\begin{equation*}
  \binom{n}{\lfloor \frac n2\rfloor + k}
\end{equation*}
with $k$ ranging over a fixed interval centered at 0. Such binomial coefficients do not form linearly recursive sequences: see e.g. \cite[Example 6.3.2]{enum2}.

On the other hand, as per \Cref{th:snbetter}, $s_n:=s_n^G(M)$ is a recursive sequence: \Cref{eq:motimes}, together with the fact that $M\otimes-$ and $\Omega^{\pm 1}$ commute module projective summands, makes it clear that each iteration of tensoring with $M$ will triple the number of indecomposable, non-projective summands. We thus have $s_n=3^{n-1}$.



\end{enumerate}



\begin{thebibliography}{10}

\bibitem{bnsn-rep1}
D.~J. Benson.
\newblock {\em Representations and cohomology. {I}}, volume~30 of {\em
  Cambridge Studies in Advanced Mathematics}.
\newblock Cambridge University Press, Cambridge, second edition, 1998.
\newblock Basic representation theory of finite groups and associative
  algebras.

\bibitem{bnsn-rep2}
D.~J. Benson.
\newblock {\em Representations and cohomology. {II}}, volume~31 of {\em
  Cambridge Studies in Advanced Mathematics}.
\newblock Cambridge University Press, Cambridge, second edition, 1998.
\newblock Cohomology of groups and modules.

\bibitem{bs}
Dave Benson and Peter Symonds.
\newblock The non-projective part of the tensor powers of a module.
\newblock {\em J. Lond. Math. Soc. (2)}, 101(2):828--856, 2020.

\bibitem{ben-mod}
David~J. Benson.
\newblock {\em Modular representation theory}, volume 1081 of {\em Lecture
  Notes in Mathematics}.
\newblock Springer-Verlag, Berlin, 2006.
\newblock New trends and methods, Second printing of the 1984 original.

\bibitem{bc-mult}
David~J. Benson and Jon~F. Carlson.
\newblock Complexity and multiple complexes.
\newblock {\em Math. Z.}, 195(2):221--238, 1987.

\bibitem{car-str}
Jon~F. Carlson.
\newblock The structure of periodic modules over modular group algebras.
\newblock {\em J. Pure Appl. Algebra}, 22(1):43--56, 1981.

\bibitem{cr-phd}
D.~A. Craven.
\newblock Algebraic modules for finite groups, 2007.
\newblock PhD thesis, University of Oxford.

\bibitem{df}
David~S. Dummit and Richard~M. Foote.
\newblock {\em Abstract algebra}.
\newblock John Wiley \& Sons, Inc., Hoboken, NJ, third edition, 2004.

\bibitem{KErdmannYoung}
Karin Erdmann.
\newblock Young modules for symmetric groups.
\newblock {\em J. Aust. Math. Soc.}, 71(2):201--210, 2001.
\newblock Special issue on group theory.

\bibitem{ev-fg}
Leonard Evens.
\newblock The cohomology ring of a finite group.
\newblock {\em Trans. Amer. Math. Soc.}, 101:224--239, 1961.

\bibitem{ev-rec}
Graham Everest, Alf van~der Poorten, Igor Shparlinski, and Thomas Ward.
\newblock {\em Recurrence sequences}, volume 104 of {\em Mathematical Surveys
  and Monographs}.
\newblock American Mathematical Society, Providence, RI, 2003.

\bibitem{ges}
Ira~M. Gessel.
\newblock Two theorems of rational power series.
\newblock {\em Utilitas Math.}, 19:247--254, 1981.

\bibitem{hap}
Dieter Happel.
\newblock {\em Triangulated categories in the representation theory of
  finite-dimensional algebras}, volume 119 of {\em London Mathematical Society
  Lecture Note Series}.
\newblock Cambridge University Press, Cambridge, 1988.

\bibitem{kp}
Manuel Kauers and Peter Paule.
\newblock {\em The concrete tetrahedron}.
\newblock Texts and Monographs in Symbolic Computation. SpringerWienNewYork,
  Vienna, 2011.
\newblock Symbolic sums, recurrence equations, generating functions, asymptotic
  estimates.

\bibitem{lip}
L.~Lipshitz.
\newblock {$D$}-finite power series.
\newblock {\em J. Algebra}, 122(2):353--373, 1989.

\bibitem{sw-alg}
Habib Sharif and Christopher~F. Woodcock.
\newblock Algebraic functions over a field of positive characteristic and
  {H}adamard products.
\newblock {\em J. London Math. Soc. (2)}, 37(3):395--403, 1988.

\bibitem{RPStanleyDiff}
R.~P. Stanley.
\newblock Differentiably finite power series.
\newblock {\em European J. Combin.}, 1(2):175--188, 1980.

\bibitem{enum2}
Richard~P. Stanley.
\newblock {\em Enumerative combinatorics. {V}ol. 2}, volume~62 of {\em
  Cambridge Studies in Advanced Mathematics}.
\newblock Cambridge University Press, Cambridge, 1999.
\newblock With a foreword by Gian-Carlo Rota and appendix 1 by Sergey Fomin.

\bibitem{enum1}
Richard~P. Stanley.
\newblock {\em Enumerative combinatorics. {V}olume 1}, volume~49 of {\em
  Cambridge Studies in Advanced Mathematics}.
\newblock Cambridge University Press, Cambridge, second edition, 2012.

\bibitem{WoodcockSharifTrans}
Christopher~F. Woodcock and Habib Sharif.
\newblock On the transcendence of certain series.
\newblock {\em J. Algebra}, 121(2):364--369, 1989.

\bibitem{z-hol}
Doron Zeilberger.
\newblock A holonomic systems approach to special functions identities.
\newblock {\em J. Comput. Appl. Math.}, 32(3):321--368, 1990.

\end{thebibliography}

\addcontentsline{toc}{section}{References}

\Addresses

\end{document}